 \newcommand{\field}[1]{\mathbb{#1}}
 \newcommand{\R}{\field{R}}
 \newcommand{\Z}{\field{Z}}
 \newcommand{\C}{\field{C}}
 \newcommand{\fieldd}[1]{\mathcal{#1}}
 \newcommand{\A}{\fieldd{A}}
\newcommand{\B}{\fieldd{B}}
 \newcommand{\Rc}{\fieldd{R}}
\newcommand{\LL}{\fieldd{L}}
\newcommand{\SSS}{\fieldd{S}}
 \newcommand{\Ic}{\fieldd{I}}
 \newcommand{\D}{\fieldd{D}}
\newcommand{\QQ}{\fieldd{Q}}
 \newcommand{\CC}{\fieldd{C}}
 \renewcommand{\P}{\fieldd{P}}
 \newcommand{\M}{\fieldd{M}}
 \newcommand{\NN}{\fieldd{N}}
 \newcommand{\OO}{\fieldd{O}}
 \newcommand{\Y}{\fieldd{Y}}
\newcommand{\kk}{\mathbf{k}}
\newcommand{\wt}{\widetilde}
\newcommand{\CP}{\mathbb{C}P}
\newcommand{\Hom}{\operatorname{Hom}}
\newcommand{\id}{\operatorname{id}}
\newcommand{\dif}{\mathrm{d}}
\newcommand{\fuk}{\mathcal{F}uk}
\newcommand{\Ext}{\mathrm{Ext}}
\newcommand{\rk}{\mathrm{rk}}
\newcommand{\vphi}{\varphi}
\newcommand{\om}{\omega}
\newcommand{\Om}{\Omega}
\newcommand{\La}{\Lambda}
\newcommand{\abs}[1]{|#1|}
\newcommand{\norm}[1]{\lVert{#1}\rVert}
\newcommand{\To}{\longrightarrow}
\newcommand{\Mapsto}{\longmapsto}
\theoremstyle{plain}
\newtheorem{thm}{Theorem}[section]
\newtheorem*{thm*}{Theorem}
\newtheorem{lem}[thm]{Lemma}
\newtheorem*{lem*}{Lemma}
\newtheorem{prop}[thm]{Proposition}
\newtheorem*{prop*}{Proposition}
\newtheorem{cor}[thm]{Corollary}
\newtheorem*{cor*}{Corollary}
\newtheorem{claim}[thm]{Claim}
\newtheorem*{claim*}{Claim}
\newtheorem*{assumptions*}{Assumptions}
\newtheorem*{assumption*}{Assumption}
\theoremstyle{definition}
\newtheorem{df}[thm]{Definition}
\newtheorem*{notation}{Notation}
\newtheorem*{notedf}{Note}
\newtheorem{ex}[thm]{Example}
\theoremstyle{remark}
\newtheorem{rem}[thm]{Remark}
\title{Stability Conditions and Lagrangian Cobordisms}
\author{Felix Hensel}
\thanks{The author was partially supported by the Swiss National Science Foundation (grant number 200021-156000).}
\date{\today}                                           
\begin{document}

\begin{abstract}

In this paper we study the interplay between Lagrangian cobordisms and stability conditions. 
%
We show that any stability condition on the derived Fukaya category $D\fuk(M)$ of a symplectic manifold $(M,\om)$ induces a stability condition on the derived Fukaya category of Lagrangian cobordisms $D\fuk(\C \times M)$.
In addition, using stability conditions, we provide general conditions under which the homomorphism
 $
 \Theta:\Om_{Lag}(M)\To K_0(D\fuk(M)),
 $
introduced by Biran and Cornea \cite{BC-CobI, BC-CobII}, is an isomorphism.
This yields a better understanding of how stability conditions affect $\Theta$ and it allows us to elucidate Haug's result, that the Lagrangian cobordism group of $T^2$ is isomorphic to $K_0(D\fuk(T^2))$ \cite{Haug-T^2}. 

\end{abstract}


\maketitle
\tableofcontents

\section{Introduction}

The notion of \emph{stable vector bundles} was first introduced by Mumford in \cite{Mumford-63} and was extensively studied and further developed by many people since then.
Building on advances in the study of Dirichlet branes in string theory, and in particular on the work of Douglas \cite{Douglas-stability}, Bridgeland \cite{Bridgeland-SC} generalised this notion of stability to the setting of triangulated categories by introducing so called \emph{stability conditions} on triangulated categories.
Bridgeland \cite{Bridgeland-SC} further shows that the set of stability conditions on a triangulated category has a natural topology and hence gives rise to an invariant of the triangulated category.
This makes stability conditions a useful tool in the study of triangulated categories.
Turning to more specific terms, a stability condition on a triangulated category $\D$ is a pair $(Z,\P)$ consisting of a slicing $\P=\{\P(\phi)\}_{\phi \in \R}$ where each $\P(\phi)$ is the subcategory of \emph{semistable objects of phase $\phi$} and $Z:K_0(\D)\to \C$ is a compatible central charge function.
The pair $(Z,\P)$ has to satisfy a number of axioms (see~\cite{Bridgeland-SC}, resp.~Def.~\ref{df:SC} below).
One important axiom is that each non-zero object of $\D$ admits a unique \emph{Harder-Narasimhan filtration}.
That is, an iterated cone decomposition over semistable objects, which are unique up to isomorphism, of \emph{strictly decreasing phases} (see~Def.~\ref{df:SC},~\ref{axiom-4} below).
In particular, the semistable objects (resp.~stable objects) generate $\D$ as a triangulated category.\par
In recent years, stability conditions gained much popularity and generated many advances, most prominently in the study of bounded derived categories of coherent sheaves $D^b\mathrm{Coh}(X)$.
In general it is difficult to construct, or establish the existence of, a stability condition on $D^b\mathrm{Coh}(X)$.
In \cite{Bridgeland-SC, Bridgeland-SC_on_K3} Bridgeland constructs stability conditions on $D^b\mathrm{Coh}(X)$ for the cases when $X$ is an elliptic curve or an algebraic $K3$-surface.
There are more results in dimensions 2 and 3, some of which can be found in \cite{Bayer-Macri-Toda-2014, Maciocia-Piyaratne-I, Maciocia-Piyaratne-II}.
\par
On the other hand, the derived Fukaya category is a central object of study in symplectic geometry.
The derived Fukaya category $D\fuk(M)$ of a symplectic manifold $(M,\om)$ is a triangulated category which encodes information on the Lagrangian submanifolds of $M$.
So it is natural to ask whether the concept of stability conditions may be used to facilitate a better understanding of $D\fuk(M)$.
Inspired by Thomas \cite{Thomas-MMMMM} and Thomas and Yau \cite{Thomas-Yau} and by earlier insights from string theory, Joyce \cite{joyce-conj} formulates a conjecture on the existence of a stability condition on $D\fuk(M)$ for a Calabi-Yau manifold $M$, where \emph{special Lagrangians} are expected to be semistable objects.\par

In 1994 Kontsevich \cite{Kontsevich-94} formulated the \emph{homological mirror symmetry conjecture} which states that there should be a triangulated equivalence between the split-closed derived Fukaya category of a Calabi-Yau manifold and the bounded derived category of coherent sheaves of its mirror manifold, that is
$$
D^\pi\fuk(X) \simeq D^b\mathrm{Coh}(X^\vee).
$$ 
This conjecture has been proven in some cases, including the tori $T^2$ by Polishchuk-Zaslow \cite{Polishchuk-Zaslow} and $T^4$ by Abouzaid-Smith \cite{Abouzaid-Smith-T^4}, the genus two curve and the quartic surface in $\CP^3$ by Seidel  and for Calabi-Yau hypersurfaces (of dimension $\geq 3$) of projective spaces by Sheridan \cite{Sheridan-HMS_for_CY_HS}.\par
On the $B$-side, $D^b\mathrm{Coh}(X)$ is the derived category of the abelian category of coherent sheaves in the ordinary sense.
Stability conditions are better understood in this setting than on the $A$-side where the derived Fukaya category is not the derived category of any abelian category but arises from the $A_\infty$-category $\fuk(M)$.
In cases where the mirror symmetry conjecture holds and it is known that stability conditions on the $B$-side exist, one can obtain stability conditions on the $A$-side by the equivalence of the respective derived categories.
For example, stability conditions on the bounded derived category of coherent sheaves over an elliptic curve are well understood (see also Example~\ref{ex:SC-on-DCoh-EllCurve}) and so by mirror symmetry one gets stability conditions on the derived Fukaya category of the torus $T^2$.\par

Biran and Cornea developed a theory of \emph{Lagrangian cobordism} in \cite{BC-CobI, BC-CobII, BC-Lef}, where they constructed the \emph{Fukaya category of cobordisms} and established links between some of the  algebraic relations in the derived Fukaya category of the underlying symplectic manifold and the geometry of Lagrangian cobordisms.
In particular, one may express a Lagrangian over a horizontal end of a cobordism as an iterated cone, in the derived Fukaya category, of the Lagrangians occurring over the remaining ends of the cobordism, c.f.~Theorem~2.2.1 in \cite{BC-CobI} (see also \eqref{eq: cone-decomp in M}).\par

Both stability conditions and the theory of Lagrangian cobordism are useful tools to understand relations in the derived Fukaya category from the algebraic respectively geometric viewpoint.
So it is natural to ask in what way these concepts are related.
In this paper we are working with a restricted class of symplectic manifolds and Lagrangian cobordisms.
The precise setting is defined in Section~\ref{sec:Fukaya_Category} (roughly speaking we are working with graded, weakly exact Lagrangian submanifolds of Calabi-Yau manifolds).
The main result we prove in this paper is the following (see Theorem~\ref{thm:(Z,P)-is-SC} for a more precise statement):

\begin{thm*}[\textbf {A}]\label{thm:thm-A}
  A stability condition on the derived Fukaya category $D\fuk(M)$ of a symplectic manifold $(M,\om)$ induces a stability condition on the derived Fukaya category of cobordisms $D\fuk(\C\times M)$.
\end{thm*}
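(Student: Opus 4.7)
The plan is to pull back the stability condition $(Z,\P)$ from $D\fuk(M)$ to $D\fuk(\C\times M)$ along a natural ``end'' functor, and then verify the axioms, with the Biran--Cornea cone decomposition entering decisively at the Harder--Narasimhan step. First I would fix a horizontal direction, say the positive real ray $\gamma\subset\C$, and associate to a Lagrangian cobordism $V\subset\C\times M$ the Lagrangian $L\subset M$ that appears as its $\gamma$-end. Under the restricted hypotheses of Section~\ref{sec:Fukaya_Category}, this assignment should extend to a triangulated functor $r:D\fuk(\C\times M)\to D\fuk(M)$. I then define
\[
\wt Z := Z\circ r_*\quad\text{on }K_0(D\fuk(\C\times M)),\qquad \wt\P(\phi):=r^{-1}(\P(\phi)),
\]
so that $V\in\wt\P(\phi)$ precisely when $r(V)$ is semistable of phase $\phi$ in $D\fuk(M)$. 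With these definitions, the compatibility $\wt Z([V])=m(V)e^{i\pi\phi}$ for $V\in\wt\P(\phi)$ and the identity $\wt\P(\phi+1)=\wt\P(\phi)[1]$ follow directly from the corresponding properties for $(Z,\P)$, since $r$ is triangulated and intertwines the grading shift on Lagrangians with the categorical shift.

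Next I would establish the Hom-vanishing axiom: if $V_i\in\wt\P(\phi_i)$ with $\phi_1>\phi_2$, then $\Hom_{D\fuk(\C\times M)}(V_1,V_2)=0$. The strategy is to replace each $V_i$, up to isomorphism in $D\fuk(\C\times M)$, by a horizontal product cobordism $\gamma\times L_i$ with $L_i:=r(V_i)\in\P(\phi_i)$; the Floer complex of two such parallel products is then computed in terms of $CF(L_1,L_2)$ in $M$, and the vanishing follows from the corresponding axiom for $(Z,\P)$.

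The main obstacle is the existence (and essential uniqueness) of Harder--Narasimhan filtrations. Given a non-zero $V\in D\fuk(\C\times M)$, I would take the HN-filtration of $r(V)\in D\fuk(M)$ to produce semistable factors $E_1,\dots,E_n$ of strictly decreasing phases. By Theorem~2.2.1 of \cite{BC-CobI}, the iterated cone representing $r(V)$ is realised geometrically by a Lagrangian cobordism with ends $E_1,\dots,E_n$ and $r(V)$, and I would lift this cone decomposition to $D\fuk(\C\times M)$ so that $V$ itself becomes an iterated cone over the product cobordisms $\gamma\times E_i\in\wt\P(\phi_i)$. The delicate point is to verify that this geometric cobordism is quasi-isomorphic to $V$ in $D\fuk(\C\times M)$ rather than merely mapping to the same object under $r$; this amounts to a conservativity statement for $r$ on the class of objects permitted in the setting of Section~\ref{sec:Fukaya_Category}, and will be the bulk of the technical work. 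Uniqueness of the HN-filtration then follows from the corresponding uniqueness downstairs together with this conservativity.
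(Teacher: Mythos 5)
Your central charge $\wt Z=Z\circ r_*$ essentially agrees with the paper's (which is $Z([V])=-Z^M([\Rc_1(V)])$), but the slicing $\wt\P(\phi):=r^{-1}(\P(\phi))$ and the HN argument built on it do not work, because restriction to a \emph{single} end loses too much information. Concretely, $\Rc_1$ is neither conservative nor faithful on $D\fuk(\C\times M)$: a cobordism all of whose ends sit at heights $\geq 2$ (equivalently $L_1=\emptyset$) restricts to the zero object at height $1$ while being nonzero, so it lies in no $\wt\P(\phi)$ and admits no HN filtration under your definition; and $\Ic^{2}X$ and $\Ic^{3}X$ have isomorphic restrictions $X[-1]$ but are not isomorphic (indeed $\Hom(\Ic^{2}X,\Ic^{3}X)=0$ by Lemma~\ref{lem:hom-inclusion}). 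For the same reason the step in your Hom-vanishing argument that replaces $V_i$ by the product $\gamma\times r(V_i)$ is false --- by Proposition~\ref{Prop:conedecomp} a cobordism is an iterated cone over the products $\Ic^{h_j}\Rc_{h_j}(V)$ attached to \emph{all} of its ends, not a single product --- and the ``conservativity of $r$'' that you correctly identify as the crux of your HN step is simply not available.

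The paper's construction differs precisely where yours breaks: the slicing must remember the \emph{height} at which a semistable piece sits. In \eqref{df:slicing-phase}, $\P_\kappa(\phi)$ is the additive hull of the objects $\Ic^{h}X[r]$ with $X\in\P^M(\phi-r+\kappa h)$ for a fixed even $\kappa\geq 4$; the offset $\kappa h$ forces semistable objects at different heights to have widely separated phases, which is what makes axiom \ref{axiom-3} (via Lemma~\ref{lem:hom-inclusion}, whose answer genuinely depends on the relative heights) and the reordering arguments in the HN step go through. The HN filtration of a geometric object is then obtained by refining the Biran--Cornea decomposition over \emph{all} ends $\Ic^{h_j}\Rc_{h_j}(V)$ by the downstairs HN filtrations of each end and reshuffling adjacent factors; general objects require a further, more delicate reshuffling. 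If you want to repair your proposal, the essential missing idea is this height-dependent phase shift together with the use of every restriction functor $\Rc_{h_j}$, not just the one at height $1$.
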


Since a triangulated category that admits a stability condition is necessarily \emph{split-closed} (see \cite[Prop.~2.4]{huybrechts-introSC} and \cite{Le-Chen}) we get the following corollary (see Corollary~\ref{cor:DF(M)loc.fin.SC-DF(CxM)split-closed}):
\begin{cor*}
   If $D\fuk(M)$ admits a stability condition, then $D\fuk(\C\times M)$ is \emph{split-closed}.
\end{cor*}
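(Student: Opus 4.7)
The plan is to deduce the corollary as an immediate consequence of Theorem~A combined with the general principle already cited in the statement. Concretely, given a stability condition $(Z,\P)$ on $D\fuk(M)$, I would first invoke Theorem~A to produce an induced stability condition on $D\fuk(\C\times M)$. I would then apply the general fact (see \cite[Prop.~2.4]{huybrechts-introSC} and \cite{Le-Chen}) that any triangulated category which admits a Bridgeland stability condition is automatically \emph{split-closed}, i.e.\ idempotent complete.

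Written out, the argument is a two-step chain of implications: $D\fuk(M)$ admits a stability condition, hence by Theorem~A so does $D\fuk(\C\times M)$, and hence by the cited general result $D\fuk(\C\times M)$ is split-closed. No further analysis of cobordisms, of the cobordism Fukaya category, or of the Biran--Cornea homomorphism $\Theta$ is required at this stage; all the geometry has been absorbed into Theorem~A.

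Since both links of the chain are off-the-shelf, there is no genuine obstacle internal to the corollary itself. The substantive work is located entirely in Theorem~A, which is where I would expect the real difficulty of the package to lie: one must produce a compatible central charge $\widetilde Z$ on $K_0(D\fuk(\C\times M))$ and a slicing $\widetilde\P$, and verify in particular the Harder--Narasimhan axiom for the induced pair on $D\fuk(\C\times M)$. Once Theorem~A is proved, the corollary reduces to a one-sentence citation, which is how I would present it in the paper.
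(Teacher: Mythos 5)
Your proof is exactly the paper's: Theorem~A produces a stability condition on $D\fuk(\C\times M)$, and the cited general facts (a stability condition yields a bounded $t$-structure by \cite[Prop.~2.4]{huybrechts-introSC}, and a bounded $t$-structure forces split-closedness by \cite{Le-Chen}) finish the argument. The only caveat worth recording is that the precise form of Theorem~A (Theorem~\ref{thm:(Z,P)-is-SC}) requires the stability condition on $D\fuk(M)$ to be \emph{locally-finite}, so the corollary as actually proved carries that hypothesis.
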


In particular, as indicated above, the derived Fukaya category of the torus $T^2$ (as defined in \cite{Haug-T^2}) admits a stability condition and hence the derived Fukaya category of cobordisms on $T^2$ is split-closed (cf.~Example~\ref{ex:T^2-cob-split-closed}).

\subsection{The relation of $\Om_{Lag}(M)$ and $K_0(D\fuk(M))$}
In \cite{BC-CobII} Biran and Cornea defined the \emph{Lagrangian cobordism group} $\Om_{Lag}(M)$ of a symplectic manifold $(M,\om)$ (see also Definition~\ref{df:Lag.Cob.group} below).
They observed (see \cite{BC-CobII, Haug-T^2}) that there is a natural surjective group homomorphism
\begin{align*}
  \Theta:\Om_{Lag}(M) \To K_0(D\fuk(M))
\end{align*}
from the Lagrangian cobordism group to the Grothendieck group of the derived Fukaya category of $M$ (see Appendix~\ref{subsubsec:cones} for the definition of the Grothendieck group).
It is an interesting question to understand when $\Theta$ is an isomorphism or, more generally, to understand the kernel of $\Theta$.
Haug \cite{Haug-T^2} used homological mirror symmetry to show that $\Theta$ is an isomorphism in the case of the torus $T^2$.
In Section~\ref{sec:theta-map} we explain the role of stability conditions in $\Theta$ being an isomorphism and we bring Haug's result into the perspective of stability conditions.
Moreover, we will establish a general criterion under which $\Theta$ is an isomorphism (see Corollary~\ref{cor:theta-iso}).

\subsection{Strategy of the proof of Theorem~A}
In \cite{BC-Lef} Biran and Cornea show that the derived Fukaya category of cobordisms is generated by (the image under the Yoneda-embedding of) Lagrangian cobordisms of the form $\gamma_j\times L \subset \C\times M$, where $L\subset M$ is a Lagrangian submanifold and $\gamma_j$ is a curve in $\C\cong \R^2$ with horizontal ends at heights $1$ and $j$.
In particular, any Lagrangian cobordism $V$ with positive ends $(L_1,\ldots,L_s)$ and without any negative ends (see Section~\ref{sec:Lagrangian_cobordisms}) is isomorphic in $D\fuk(\C\times M)$ to an iterated cone $(\gamma_{h_s}\times L_s\to \ldots \to\gamma_{h_2}\times L_2)$ where $h_s>\ldots >h_2>h_1=1$ are integers indicating the heights of the cylindrical ends of $V$ (see~Prop.~\ref{Prop:conedecomp}).
So, given a stability condition $(Z^M,\P^M)$ on $D\fuk(M)$ we define a candidate $(Z,\P)$ for a stability condition on $D\fuk(\C\times M)$ (cf.~\eqref{df:slicing-phase}).
Given our setup the definition of $(Z,\P)$ is relatively straightforward.
Roughly speaking, objects of the form $\gamma_j\times L$ are semistable if $L\in Ob(D\fuk(M))$ is semistable with respect to $(Z^M,\P^M)$.
We then check that the candidate $(Z,\P)$ satisfies axioms \ref{axiom-1}-\ref{axiom-4} of Definition \ref{df:SC}, which is more involved.
It is important to note that semistable objects in $M$ are not necessarily \emph{geometric}, that is they are not necessarily isomorphic to the image of an honest Lagrangian under the Yoneda-embedding.\par
The most intricate axiom to check is axiom \ref{axiom-4}, i.e. the existence of a Harder-Narasimhan filtration.
A Lagrangian cobordism $V$ with positive ends $(L_1,\ldots,L_s)$ and without any negative ends can be viewed, via the Yoneda-embedding, as a \emph{geometric} object of $D\fuk(\C\times M)$.
We now use Biran and Cornea's iterated cone decomposition of $V$ as a starting point to construct the HN-filtration.
However, the Lagrangians $(L_1,\ldots,L_s)$ are possibly not semistable in $D\fuk(M)$.
So we need to use the HN-filtration of each of these Lagrangians with respect to $(Z^M,\P^M)$ in order to refine the iterated cone decomposition.
This relies on the fact that there are inclusion respectively restriction functors between the derived categories $D\fuk(M)$ and $D\fuk(\C\times M)$ (cf.\eqref{eq:inclusion-functor} and \eqref{eq:restriction-functor}).
After such a refinement we have an iterated cone decomposition of $V$ over \emph{semistable objects}.
However, the phases of these semistable objects do not necessarily form a strictly decreasing sequence.
By algebraic arguments we may adapt this cone decomposition by, for example, switching the order of adjacent objects.
After finitely many steps we arrive at the desired HN-filtration.\par

The case where we consider a general object $W$ of $D\fuk(\C\times M)$ is more delicate.
In this case we first decompose $W$ into an iterated cone over geometric objects.
Then we have to reshuffle the factors of the HN-filtrations of the individual geometric objects in order to obtain the HN-filtration of $W$ itself.
Compared to the previous case there is an additional complication here, arising from the fact that reordering the brackets in an iterated cone decomposition comes along with \emph{shifts} in the triangulated category (cf.~Remark~\ref{rem:shifting-iterated-cones}).

\subsection{Organization of the paper}

The definition of the Fukaya category and related concepts will be reviewed in Section~\ref{sec:Fukaya_Category}.
Section~\ref{sec:Lagrangian_cobordisms} is dedicated to Lagrangian cobordisms and inclusion respectively restriction functors between the categories $D\fuk(M)$ and $D\fuk(\C\times M)$.
In Section~\ref{Section: Stab.Cond.} we will introduce the notion of stability condition, state the main result and discuss an important example.
Section~\ref{sec:cone_decomp} deals with the cone decomposition induced by Lagrangian cobordisms and contains some algebraic remarks on $A_\infty$-modules.
In Section~\ref{chap:theta-map} we will investigate a relation between the Lagrangian cobordism group and stability conditions on the derived Fukaya category and discuss the case of the torus $T^2$.
In Section~\ref{Sec:proof_of_main_result} we will prove the main result, Theorem~\ref{thm:(Z,P)-is-SC}. %
Appendix~\ref{subsubsec:cones} deals with iterated cone decompositions in triangulated categories.
In Appendix~\ref{appendix:HN-filtration} we prove the uniqueness of the Harder-Narasimhan filtration.
Some basic information on quasi-abelian categories can be found in Appendix~\ref{appendix:qac}.

 \subsubsection*{Acknowledgement}
I would like to express my gratitude to my advisor Paul Biran for numerous suggestions, patient explanations, and for generously sharing his insights with me during many long discussions.
I also thank Octav Cornea and Luis Haug for helpful comments and their interest in my work.


\section{Preliminaries on the Fukaya Category}\label{sec:Fukaya_Category}

In this section we will outline the setup of Lagrangian Floer theory and the Fukaya category and we will define the setting of the paper.
For a more detailed and in depth treatment of these subjects we refer the reader to \cite{BC-CobI, BC-CobII, seidel-book, seidel-gradedlag, joyce-conj, FOOO-bookI, FOOO-bookII}.
Experts on Lagrangian Floer theory may want to skip this section.

\subsection{The Fukaya category}

The Fukaya category $\fuk(M)$ of a symplectic manifold $(M,\omega)$ is an $A_\infty$-category over some ground field $K$.
To be more precise one should write $\fuk(\CC)$, where $\CC$ is an admissible class of Lagrangians in $M$.
There are many different versions of the Fukaya category. For us it doesn't really matter which class $\CC$ of objects we are working with, as long as the Fukaya category $\fuk(\CC)$ can be set up properly and \emph{$\Z$-gradings} are incorporated.
For this reason we will often just write $\fuk(M)$ for the Fukaya category without specifying the class $\CC$ of admissible objects.

\subsubsection{Objects}\label{subsubsect: objects}
The objects of $\fuk(\CC)$ are \emph{Lagrangian branes}.
These are triples $(L,\theta,P)$ where $L\subset M$ is a Lagrangian in $M$ of a certain class $\CC$, $\theta$ is a \emph{$\Z$-grading} on $L$ and $P$ is a \emph{Pin-structure} on $L$.
Usually the Pin-structure will be omitted from the notation of Lagrangian branes and sometimes we will also omit the grading.
We will explain more about the grading later on in this section, for the definition and more details on Pin-structures we refer to \cite{Lawson-Blaine-spin_geometry} and \cite[(11i)]{seidel-book}.

There are different possible choices for the class $\CC$ of admissible Lagrangians.
For instance one may take $\CC$ to be contained in one of the following classes of Lagrangians.
\begin{itemize}
 \item Exact Lagrangians in $M$. In this case one may take $\C$ as the ground field.
 \item Weakly exact Lagrangians in $M$, i.e. Lagrangians such that $\om$ vanishes on $\pi_2(M,L)$.
   In this case it is necessary to work over the \emph{Novikov-field}
   \begin{align}\label{eq:Novikov-field}
  \Lambda := \left\{ \sum_{i=0}^\infty c_i T^{a_i} \, \Big|\, c_i\in\C,\, a_i\in\R,\, a_i<a_{i+1},\, \lim_{i\to\infty} a_i = \infty\right\}.     
   \end{align}

\end{itemize}

\subsubsection{Grading}

In this section we collect some remarks about graded Lagrangian submanifolds following the references \cite{seidel-gradedlag, seidel-book, joyce-conj}. By \emph{grading} we will always mean \emph{$\Z$-grading} from now on.\par
Suppose that $(M^{2n},\om)$ is a symplectic manifold with a compatible almost complex structure $J$ and Riemannian metric $g$, induced by $\om$ and $J$.
There is a natural fibre bundle $\mathcal{L}^M\to M$ associated to $M$ with fibre $\mathcal{L}^M_x$ the Lagrangian Grassmannian of $(T_xM,\om_x)$.
If $2c_1(M)=0\in H^2(M;\Z)$ then the line bundle $K_M^{\otimes 2}$ is trivial, where $K_M:=\bigwedge^{n,0}(T^*M,J)$ is the canonical line bundle over $M$.
Fix a non-vanishing section $\Theta:M\to K_M^{\otimes 2}$, which determines a non-vanishing complex-linear $n$-form on $M$ up to sign.
Such a section exists since $K_M^{\otimes 2}$ is trivial.
This induces a well-defined map
$$
 \alpha_\Theta:\mathcal{L}^M\To S^1; \quad (x,\Lambda_x) \Mapsto \frac{\Theta_x((v_1\wedge\cdots\wedge v_n)\otimes (v_1\wedge\cdots\wedge v_n))}{\abs{\Theta_x((v_1\wedge\cdots\wedge v_n)\otimes (v_1\wedge\cdots\wedge v_n))}}
$$
where $v_1,\ldots,v_n$ is any basis of the Lagrangian subspace $\Lambda_x\subset T_xM$.
To any Lagrangian submanifold $L\subset M$ we can associate a Lagrangian subbundle $TL\subset TM$ or equivalently a section $s_L:L\to \mathcal{L}^M |_L$, given by its Gauss-map.

\begin{df}
  A \emph{grading} (or \emph{$\Z$-grading}) of an oriented Lagrangian $L\subset M$ is a lift $\theta$ of the map $\alpha_\Theta \circ s_L: L \to S^1$ to $\R$.
  $$ \xymatrix{
    & \R\ar[d]^{e^{2\pi i \bullet}}\\
    L\ar@{-->}[ur]^\theta \ar[r]_{\alpha_\Theta \circ s_L} & S^1
  }
  $$
  The tuple $(L,\theta)$ consisting of an oriented Lagrangian together with a grading is called a \emph{graded Lagrangian}.
  A graded Lagrangian $(L,\theta)$ is said to be \emph{special} if $\theta$ is constant.
\end{df}

A grading on $L$ does not always exist, such a lift exists if and only if $(\alpha_\Theta \circ s_L)_\#(\pi_1(L))=0\subset \pi_1(S^1)\cong \Z$, i.e.~if the Maslov-class of $L$ vanishes.
If $L$ is connected and a grading exists, then it is unique up to adding integers.\\

\begin{df}
  A \emph{Calabi-Yau manifold} of dimension $n$ is a quadruple $(M,J,g,\Om)$ consisting of a complex manifold $(M,J)$ of complex dimension $n$ together with a K\"ahler metric $g$, K\"ahler form $\om$ and a holomorphic $(n,0)$-form $\Om$ on $(M,J)$ satisfying
$$
\om^n / n! = (-1)^{n(n-1)/2}(i/2)^n\Om\wedge \bar{\Om}.
$$
\end{df}

\begin{ex}
  The simplest example of a Calabi-Yau manifold is the standard complex space $\C^n$ with coordinates $(z_1,\ldots,z_n)$ endowed with the standard complex structure $J$, symplectic form $\om_{\mathrm{std}}$, standard K\"ahler metric $g_{\mathrm{std}}$ and holomorphic $n$-form $\Om_{\mathrm{std}}$ given by
  \begin{align*}
    \om_{\mathrm{std}} = \frac{i}{2}\sum_{j=1}^n\dif z_j\wedge \dif \bar{z}_j\quad\text{and}\quad \Om_{\mathrm{std}}=\dif z_1\wedge\cdots \wedge \dif z_n.
  \end{align*}
In particular $(\C,i,g_{\mathrm{std}},\Om_{\mathrm{std}})$ is Calabi-Yau.

\end{ex}

If $(M^{2n},J,g,\Om)$ is a Calabi-Yau manifold then we can set $\Theta:= \Om^{\otimes 2}$ and apply the above construction.
 Let $(L_1,\theta_1), (L_2,\theta_2)$ be two graded Lagrangians in $M$, intersecting transversely at a point $p\in M$.
  We can choose an isomorphism $(T_pM,J_p,g_p,\Om_p)\cong (\C^n,J_{\mathrm{std}},g_{\mathrm{std}}, \Om_{\mathrm{std}})$ (see \cite{FOOO-Ch10} and \cite[Def.~2.20]{joyce-conj}) under which $T_pL_1$ is identified with $\R^n\subset \C^n$ and $T_pL_2$ is identified with the plane $\{e^{i\phi_1}x_1,\ldots,e^{i\phi_n}x_n \,|\, x_j\in \R\}\subset \C^n$.
For a fixed K\"ahler structure, the \emph{K\"ahler angles} $\phi_1,\ldots,\phi_n\in (0,\pi)$ are unique up to ordering.
\begin{df}
  The \emph{degree} $\deg_p(L_1,L_2)$ of the intersection point $p\in L_1\pitchfork L_2$ is defined by 
$$\deg_p(L_1,L_2) =n + \theta_2(p)-\theta_1(p)-\frac{1}{\pi}\sum_{j=1}^n\phi_j.$$
\end{df}

\begin{rem}
  Note that $\deg_p(L_1,L_2)$ is an integer since $\theta_2(p)=\theta_1(p)+\frac{1}{\pi}\sum_{j=1}^n\phi_j \mod \Z$.\par
  Changing the order of $L_1$ and $L_2$ has the effect of replacing the K\"ahler angles $\phi_1,\ldots,\phi_n$ by $\pi-\phi_1,\ldots,\pi-\phi_n$ and therefore we get the relation
  $$
  \deg_p(L_1,L_2)+\deg_p(L_2,L_1)=n.
  $$
  Moreover, since the K\"ahler angles all lie in the open interval $(0,\pi)$ we have that
  $$
  \theta_2(p)-\theta_1(p) < \deg_p(L_1,L_2) < \theta_2(p)-\theta_1(p) + n.
  $$
\end{rem}

\begin{df}\label{df:shift}
  For $\sigma \in\Z$ there is a \emph{$\sigma$-fold shift operator} on graded Lagrangians given by subtracting $\sigma$ from the grading
  $$
  S^\sigma(L,\theta):=(L,\theta - \sigma)
  $$
  and changing the orientation on $L$ if $\sigma$ is odd.
  We also write $(L,\theta)[\sigma]:=S^\sigma(L,\theta)$.
\end{df}
\begin{rem}
  The shift operator also has an effect on the Pin-structure, for this and details on Pin-structures we refer to Seidel \cite[(11i),(11k)]{seidel-book}.
\end{rem}

\subsubsection{Morphisms and the Floer-complex}
Let $\CC$ be a class of admissible Lagrangians, we take the Novikov-field $\Lambda$ (see \eqref{eq:Novikov-field}) as our ground field.
Let $L_i:=(L_i,\theta_i,P_i)$, $i=0,1$ be two Lagrangian branes such that $L_0$ and $L_1$ intersect \emph{transversely}.
The Floer-cochain complex $CF(L_0,L_1)$ is the graded $\Lambda$-vector space whose degree $k$-part is given by
\begin{equation*}
  CF^k(L_0,L_1) = \bigoplus_{\substack{p\in L_0\cap L_1 \\ \deg_p(L_0,L_1)=k}}\Lambda\cdot p.
\end{equation*}
We will now briefly describe the Floer-differential.
Let $Z:= \R\times [0,1]$ be the strip with coordinates $(s,t)$ and standard complex structure $j_Z$ and let $J$ be a generic 1-parametric family of complex structures on $M$ depending on $t\in [0,1]$.
Given $p,q\in L_0\cap L_1$ we denote by $\M_0(p,q)$ the zero-dimensional component (which is a finite set if $\deg_p(L_0,L_1)=\deg_q(L_0,L_1)+1$) of all maps $u:Z\to M$ that satisfy
\begin{align}\label{eq:J-hol}
  \partial_s u+J(t,u)\partial_t u = 0
\end{align}
and the asymptotic conditions
\begin{align*}
  u(s,0)\in L_0, \; u(s,1)\in L_1, \; \lim_{s\to -\infty} u(s,\cdot)=p,\; \lim_{s\to \infty} u(s,\cdot)=q
\end{align*}
modulo the $\R$-action coming from translation in the $s$-direction.
On generators $q\in L_0\cap L_1$ of degree $k$ (i.e. $\deg_q(L_0,L_1)=k$), the Floer-differential $\partial:CF^k(L_0,L_1)\to CF^{k+1}(L_0,L_1)$ is given by
\begin{equation*}
  \partial q = (-1)^k \mkern-30mu\sum_{\substack{p\in L_0\cap L_1 \\ \deg_p(L_0,L_1)=k+1}} \sum_{u\in \M_0(p,q)} \mathrm{sign}(u)\, T^{\omega(u)} \cdot p.
\end{equation*}
For details on the sign of $u$, $\mathrm{sign}(u)\in\{\pm1\}$, and how it is determined, we refer to \cite[(12f)]{seidel-book}.\par

In the general case, that is when $L_0$ and $L_1$ do not intersect transversely, one has to incorporate Hamiltonian perturbations as follows.
To each pair $(L_0,L_1)$ one associates a Floer datum $(H,J)$ consisting of a Hamiltonian function $H$ with Hamiltonian flow $\phi^H_t$ and a generic almost complex structure $J$ such that $\phi^H_1(L_0)$ intersects $L_1$ transversely.
One then considers the following perturbed version of equation \eqref{eq:J-hol}
\begin{equation*}
  \partial_s u+J(t,u)(\partial_tu - X_H(t,u)) = 0,
\end{equation*}
where $X_H$ is the Hamiltonian vector field of $H$, and proceeds as before.
Note that it is slightly imprecise to write $CF(L_0,L_1)$, one should write $CF(L_0,L_1;H,J)$ since the cochain complex depends on the choice of Floer datum.
However, it can be shown that the \emph{Floer cohomology} $HF^*((L_1,\theta_1),(L_2,\theta_2))$, which is the cohomology of this cochain complex, doesn't depend on the choice of Floer datum up to isomorphism.

\begin{rem}
  One can also endow the Lagrangians with \emph{local systems}, in which case one has to incorporate the parallel transport along the boundary of $u:Z\to M$ in the definition of the differential (see e.g.~\cite{Haug-T^2}).
\end{rem}


The space of \emph{morphisms} in the Fukaya category $\fuk(M)$ from $L_0$ to $L_1$ is given by the graded vector space
\begin{equation*}
  \hom(L_0,L_1) = CF(L_0,L_1).
\end{equation*}

\begin{rem}
  The shift operator (see Definition~\ref{df:shift}) has the following properties (cf.~\cite{seidel-gradedlag}):
  \begin{align*}
    CF^*((L_1,\theta_1)[\sigma],(L_2,\theta_2)[\tau])& = CF^{*-\sigma+\tau}((L_1,\theta_1),(L_2,\theta_2)) \\
    HF^*((L_1,\theta_1)[\sigma],(L_2,\theta_2)[\tau])& \cong HF^{*-\sigma+\tau}((L_1,\theta_1),(L_2,\theta_2)).
  \end{align*}
\end{rem}

\subsubsection{$A_\infty$-operations}

The $A_\infty$-operations of $\fuk(M)$ are maps 

\begin{equation*}
  \mu^d:\hom(L_{d-1},L_d)\otimes \ldots\otimes \hom(L_0,L_1)\to \hom(L_0,L_d)[2-d],\quad d\geq 1.
\end{equation*}
As before we will only briefly describe these maps in the situation where all subsequent Lagrangians intersect transversely, that is we assume that $L_0\pitchfork L_d$ and $L_{i-1}\pitchfork L_i$ for $1\leq i\leq d$ (cf.~\cite{Auroux-beginner, Haug-T^2}).
For more details and the proof that $\fuk(M)$ is actually an $A_\infty$-category, we refer to \cite{seidel-book}.
\par
Let $S$ be the unit disk with one incoming boundary puncture $z_0$ and $d$ outgoing boundary punctures $z_1,\ldots,z_d$, lying in counterclockwise order on the unit circle and let $p_0, p_1,\ldots, p_d\in M$
be such that $p_0\in L_0\cap L_d$ and $p_i\in L_{i-1}\cap L_i$ for $1\leq i \leq d$.
We denote by $\M(p_0;p_1,\ldots,p_k;[u],J)$ the moduli space of maps $u:S\to M$ in the homotopy class $[u]$, extending continuously to the unit disc and mapping the arcs from $z_i$ to $z_{i+1}$ (resp. from $z_d$ to $z_0$) to $L_i$ satisfying
\begin{equation*}
Du(z) + J(z,u)\circ Du(z)\circ j_S=0
\end{equation*}
up to the reparametrization action by automorphisms of the unit disk.
Here, $J$ is a generic $\om$-compatible almost complex structure depending on $z\in S$.
Under the transversality assumption the expected dimension of the moduli space is (see e.g.~\cite[(12f)]{seidel-book})
\begin{equation*}
  \dim(\M(p_0;p_1,\ldots,p_k;[u],J))= d-2+\deg_{p_0}(L_0,L_d)-\sum_{i=1}^d\deg_{p_i}(L_{i-1},L_i)
\end{equation*}
and in this case we define the ($\Lambda$-linear) maps 
\begin{equation*}
  \mu^d:\hom(L_{d-1},L_d)\otimes \ldots\otimes \hom(L_0,L_1)\to \hom(L_0,L_d)[2-d],\quad d\geq 1,
\end{equation*}
by
\begin{equation*}
 \mu^d(p_d,\ldots,p_1) =\mkern-40mu \sum_{\substack{p_0\in L_0\cap L_d \\\dim(\M(p_0;p_1,\ldots,p_k;[u],J))=0}} \sum_{u\in\M(p_0;p_1,\ldots,p_k;[u],J)} (-1)^\dagger \mathrm{sign}(u) \cdot T^{\om([u])} p_0,
\end{equation*}
where $\dagger = \sum_{i=1}^d i\cdot \deg_{p_i}(L_{i-1},L_i)$.
In the general case, one has to consistently choose perturbation data associated to each tuple of Lagrangians, and perturb the Cauchy-Riemann equation for the curves $u:S\to M$.
For an extensive treatment of these issues we refer to Seidel \cite{seidel-book}.

\begin{rem}
  Note that $\mu^1$ coincides with the Floer-differential up to a sign, i.e. $\mu^1(p)=(-1)^{\abs{p}}\partial(p)$, where $\abs{p}$ denotes the degree of $p$ in the graded vector space $CF(L_0,L_1)$.
\end{rem}

\subsubsection{The derived Fukaya category}\label{sec:Dfuk}

The derived Fukaya category $D\fuk(M)$ is constructed from the Fukaya category $\fuk(M)$ by first completing it to a triangulated $A_\infty$-category (i.e.~taking a \emph{triangulated envelope} \cite[(3j)]{seidel-book}) and then taking its degree zero cohomology.
There are various realization of the derived Fukaya category, we will briefly describe one possibility via the Yoneda-embedding and refer to Seidel \cite[(3j)]{seidel-book} for another construction, yielding an equivalent category, via twisted complexes.\par

Recall that the \emph{Yoneda-embedding} $\Y:\fuk(M)\to mod(\fuk(M))$ is a cohomologically full and faithful embedding of $\fuk(M)$ into the triangulated $A_\infty$-category $mod(\fuk(M))$ of $A_\infty$-modules over itself.
On objects it acts as $\Y((L,\theta))=CF(-,(L,\theta))$.
The \emph{derived Fukaya-category} $D\fuk(M)$ is, up to equivalence, the degree $0$ cohomology of the triangulated completion $\Y(\fuk(M))^{\wedge}$ of the image of the Yoneda-embedding inside $mod(\fuk(M))$, i.e.
\begin{equation*}
D\fuk(M) = H^0(\Y(\fuk(M))^{\wedge} ).  
\end{equation*}
 The \emph{shift functor} descends to an autoequivalence of the derived Fukaya category $D\fuk(M)$.
 We will call an object of $D\fuk(M)$ \emph{geometric}, if it isomorphic to a shift of an object in the image of the Yoneda-embedding.
 The geometric objects generate the derived Fukaya category in the sense that each object of $D\fuk(M)$ is isomorphic to an iterated cone over geometric objects.

\begin{rem}
  Let $\mathcal{F}:\A \to \B$ be an $A_\infty$-functor between two $A_\infty$-categories $\A$ and $\B$.
  Then (\cite[(1k)]{seidel-book}) $\mathcal{F}$ induces a \emph{pullback} functor $\mathcal{F}^*:mod(\B)\to mod(\A)$, so we may pull back $A_\infty$-modules over $\B$ to $A_\infty$-modules over $\A$ via $\mathcal{F}$.
  Such modules cannot be pushed forward in general.
 If $\mathcal{F}^*$ has the property that it sends Yoneda-modules over $\B$ to Yoneda-modules over $\A$, then it induces a triangulated functor on the derived level $D(\B)\to D(\A)$.
  On the other hand, $\mathcal{F}$ also induces an $A_\infty$-functor $Tw\mathcal{F}: Tw \A \to Tw \B$ (\cite[Lemma~3.23]{seidel-book}) between the $A_\infty$-categories of twisted complexes over $\A$ and $\B$ (\cite[(3l)]{seidel-book}).
  This in turn induces a triangulated functor $H^0(Tw \mathcal{F}): H^0(Tw \A) \to H^0(Tw \B)$, that is a triangulated functor on the derived level $\wt{\mathcal{F}}:D(\A)\to D(\B)$ (\cite[Lemma~3.30]{seidel-book}).
  The derived  category of an $A_\infty$-category is only defined up to equivalence, depending on the choice of triangulated envelope.
  Twisted complexes $Tw \A$ and the triangulated completion of the Yoneda-embedding in the category of $A_\infty$-modules 
over $\A$ both form triangulated envelopes of $\A$.
  We now fix triangulated equivalences $H^0(Tw \A)\overset{\eta_\A}{\To}H^0(\Y(\A)^\wedge)$ and $H^0(Tw \B)\overset{\eta_\B}{\To}H^0(\Y(\B)^\wedge)$.
  Recall that $\Y(\A)^\wedge$ denotes the triangulated completion of $\Y(\A)$ in $mod(\A)$, similarly for $\B$.
  Given an object $\M$ in $H^0(\Y (\A)^\wedge)$, i.e. an $A_\infty$-module $\M$ in the triangulated completion $ \Y (\A)^\wedge$, let $C$ be an object of $H^0(Tw \A)$ such that $\eta_\A(C)$ is isomorphic to $\M$ in $H^0(\Y(\A)^\wedge)$.
  Then there exists an $A_\infty$-module $\NN$ in $\Y(\B)^\wedge$ which is isomorphic to $\eta_\B(\wt{\mathcal{F}} C)$ in $H^0(\Y(\B)^\wedge)$.
  
\end{rem}



\section{Lagrangian Cobordisms}\label{sec:Lagrangian_cobordisms}

\subsection{Graded Lagrangian Cobordisms and the induced grading on the ends}

In \cite{BC-CobI, BC-CobII, BC-Lef} Biran and Cornea developed a theory of \emph{Lagrangian cobordism}.
Our setting will be very similar to the one employed in \cite{BC-Lef} and we will make use of the machinery and notation that Biran and Cornea established.
The difference is that we will consider \emph{graded} Lagrangian cobordism and we will work in a \emph{co}homological rather than a homological setting.\par

  Let $(M,\om)$ be a symplectic manifold.
  We denote by $\wt{M}$ the symplectic manifold $\C\times M$ endowed with the symplectic form $\wt{\om}=\om_{\mathrm{std}}\oplus \om$ and by $\pi:\wt{M}\to \C$ the projection to the complex plane.
  For subsets $X\subset \wt{M}$ and $S\subset \C$ we write $X|_S:=X\cap \pi^{-1}(S)$.

\begin{df}[cf.~\cite{BC-CobI}]\label{df_Lagrangian_cobordism} 
 Given two families $(L'_k)_{1\leq k\leq s_-}$ and $(L_j)_{1\leq j \leq s_+}$ of closed Lagrangians in $M$, a \emph{Lagrangian cobordism} $V$ between these two families is a Lagrangian submanifold $V\subset \wt{M}$ such that for some $R >0$ we have that $\pi(V)\cap [-R,R]\times \R \subset \C$ is compact and
  \begin{align*}
    V|_{(-\infty,-R]\times \R} &= \bigsqcup_k ((-\infty,-R]\times \{h'_k\})\times L'_k\\
    V|_{[R,\infty)\times \R} &= \bigsqcup_j ([R,\infty)\times \{h_j\})\times L_j,
  \end{align*}
where we have identified $\C\cong \R^2$ via $(x+iy) \leftrightarrow (x,y)$.
Moreover $h_1<\ldots <h_{s_+}$ and $h'_1<\ldots <h'_{s_-}$ are integers and $h_k$ is called the \emph{height} of the horizontal end $L_k$ in the Lagrangian cobordism $V$.
Such a Lagrangian cobordism is denoted by $V:(L_j)_{1\leq j \leq s_+}\leadsto (L'_k)_{1\leq k\leq s_-}$.
\end{df}

\begin{rem}
  Note that either of the families $(L'_k)_{1\leq k\leq s_-}$ respectively $(L_j)_{1\leq j \leq s_+}$ may be empty, in which case we have
  \begin{align*}
    V|_{(-\infty,-R]\times \R} = \emptyset \quad\text{respectively}\quad V|_{[R,\infty)\times \R} =\emptyset.
  \end{align*}
  Lagrangian cobordisms of the type $V:(L_j)_{1\leq j \leq s_+}\leadsto \emptyset$ respectively $V:\emptyset\leadsto (L'_k)_{1\leq k\leq s_-}$ are called \emph{positively ended} (see Figure~\ref{fig:pos-cob}) respectively \emph{negatively ended}.
\end{rem}

\begin{figure}[ht]
\def\svgwidth{0.6\columnwidth}
\centering
                        \includegraphics[width=0.6\columnwidth]{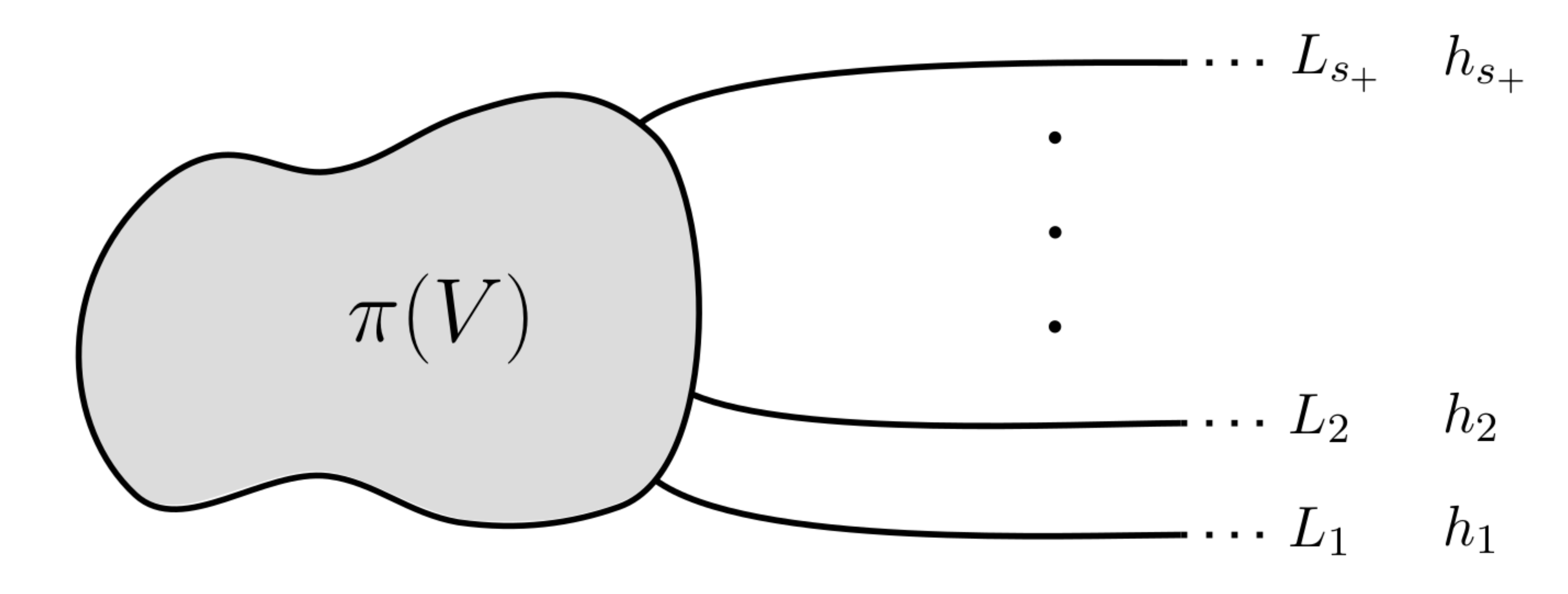}
\label{fig:pos-cob}
\caption{Projection to $\C$ of a positively ended Lagrangian cobordism.}
\end{figure}

Let now $(M^{2n},J,g,\Om)$ be a Calabi-Yau manifold.
In their work Biran and Cornea constructed the \emph{Fukaya category of cobordisms} $\fuk(\C\times M)$.
In our setting all the cobordisms will be graded.
That is, the objects of the Fukaya category $\fuk(\C\times M)$ of \emph{graded}, \emph{positively ended} Lagrangian cobordisms are graded Lagrangian cobordisms $(V,\theta_V):(L_1,\ldots,L_m)\leadsto \emptyset$ in $(\C\times M, \widetilde J,\widetilde{g},\widetilde{\Om})$ that are cylindrical in the complement of $K\times M$ and whose cylindrical ends are on heights $h_1 < \ldots < h_m$ in $\Z_{\geq 1}$.
Here $K\subset \C$ is a fixed compact subset and $L_1,\ldots,L_m$ are \emph{gradeable} Lagrangians in $M$.
In addition, to simplify the notation, we always assume that the height $h_1$ of the lowest cylindrical end $[R,\infty)\times \{h_1\}\times L_1$ is $1$, i.e.~$h_1=1$.
If necessary we will set $L_1=\emptyset$.


Moreover, $\widetilde{\om}:= \om_{\mathrm{std}}\oplus\om$ and $\widetilde\Om$ is a holomorphic $(n+1)$-form on $\widetilde{M}:=\C\times M$.
In the complement of $K\times M$ the additional structure on $\widetilde M$ satisfies:
\begin{align*}
 \widetilde J= i\oplus J,\quad \widetilde{g}=g_{\mathrm{std}}\oplus g,\quad \text{and}\quad \widetilde{\Om}=\pi_\C^*\dif z \wedge \pi_M^*\Om
\end{align*}
where $\widetilde M \overset{\pi_\C}{\To} \C$ and $\widetilde M \overset{\pi_M}{\To} M$ are the respective projections.


In \cite{BC-Lef} Biran and Cornea consider only negatively ended cobordisms (for notational convenience) and in the setup of the Floer-complex $CF(V,W)$ of two cobordisms, they choose the perturbation datum in such a way that the horizontal ends of the \emph{second} cobordism $W$ are slightly perturbed downwards, i.e.~in the $-i$ direction in $\C$.
The only change we are making to Biran and Cornea's setup is that we are working with \emph{positively} ended cobordisms.
And we will still choose the perturbation datum in such a way that the horizontal ends of the \emph{second} cobordism $W$ are slightly perturbed downwards.
The reason for this change is that we are working in a \emph{co}homological and not in a homological setting.
This adaptation will ensure that the inclusion functor $\iota^{(\beta,\rho)}$, which will be defined in Section~\ref{sec:incl-restr-functors}, will indeed be an $A_\infty$-functor and respect the degrees of morphisms correctly (cf.~\cite[Section 4]{Haug-T^2}).
All the relevant constructions and results from \cite{BC-CobI, BC-CobII, BC-Lef} carry over to this setting.
\begin{rem}
We could have just as well worked with negatively ended cobordisms while choosing the opposite perturbation datum. However, this is not convenient for notational purposes.
\end{rem}

  \begin{rem}\label{rem:extra-data}
   In addition to a grading we may also endow our Lagrangians with other additional data, such as local systems or Pin-structures.
   Given Lagrangians $(L_1,\ldots,L_m)$ equipped with such extra data, a cobordism $V:(L_1,\ldots,L_m)\leadsto \emptyset$ is a cobordism of the underlying Lagrangians, which is itself endowed with the same types of extra data. The restriction of the extra data on $V$ to the ends (provided one can suitably define the restriction of the additional structure to the ends) should coincide with the given data on the ends.
  \end{rem}

  \begin{rem}
    Note also that local systems on Lagrangians in $M$ are often necessary in order to define a stability condition on $D\fuk(M)$.
    In the case of $T^2$ for example, local systems are needed in order to get a split closed derived Fukaya category (see \cite{Haug-T^2}), which is a necessary condition for the existence of a stability condition.
    However, in order to construct a stability condition on $D\fuk(\C\times M)$ from a given one on $D\fuk(M)$, we do not need to make explicit use of local systems.
    They will be implicitly carried along in case they are required in $D\fuk(M)$.
  \end{rem}

\begin{notation}
  From now on, whenever convenient, we will omit the grading from the notation, i.e.~we will simply write $L$ or $V$ instead of $(L,\theta_L)$ or $(V,\theta_V)$.
  We will sometimes also just write $(V,\theta_V)$ (resp. $(L,\theta_L)$) for the Yoneda-module $\Y(V,\theta_V)\in D\fuk(\C\times M)$ (resp. $\Y(L,\theta_L)\in D\fuk(M)$).
  It should be clear from the context which is meant.
\end{notation}

  Let $(V,\theta_V)$ be a graded Lagrangian cobordism with
  $$
  V|_{[R,\infty) \times \{h_j\} }= [R,\infty)\times \{h_j\} \times L_j
  $$
  for some Lagrangians $L_j\subset M$ and $1\leq j\leq s$ and some $R>0$.
  For $x\in [R,\infty),\, p\in L_j$ the tangent space of $V$ at $((x,h_j),p)$ splits as a direct sum
  $$
  T_{((x,h_j),p)}V \cong T_x\R \oplus T_pL_j\subset T_{(x,h_j)}\C\oplus T_pM \cong T_{((x,h_j),p)}\widetilde M.
  $$
  The part of this Lagrangian subspace that lies in $T\C$ is independent of $x\in [R,\infty)$ and hence the same is true for the grading $\theta_V$.
  Therefore, the grading $\theta_V$ on the cobordism induces a grading $\theta_{V,j}$ on each of its ends $L_j$ by
  $$
  \theta_{V,j}:L_j\To \R;\quad \theta_{V,j}(p):=\theta_V((x_0,h_j),p)
  $$
  for any fixed choice of $x_0\in [R,\infty)$.
  \begin{rem}
    Notice that for any $p\in L_i$ and $q\in L_j$, the difference $\theta_i(p)-\theta_j(q)$ of the induced gradings on two ends $L_i$, $L_j$ of the cobordism $(V,\theta_V)$ remains unchanged after a grading shift of $V$.
    That is, we have that $(V,\theta_V)[\sigma]$ induces gradings $\theta_i-\sigma$, $\theta_j-\sigma$ and indeed $(\theta_i(p)-\sigma)-(\theta_j(q)-\sigma)=\theta_i(p)-\theta_j(q)$.
  \end{rem}

\subsection{Inclusion and restriction functors}\label{sec:incl-restr-functors}
Let $\beta:\R\to \C$ be an embedded graded smooth curve with grading $\rho:\mathrm{im}(\beta)\to \R$ and such that $\beta((-\infty,-R'])=[R,\infty)\times \{c_+\}$ and $\beta([R',\infty))=[R,\infty)\times\{c_-\}$ for some positive constants $R,R'$ and integers $0<c_-<c_+$.
If $\beta$ satisfies these conditions we say that it is a graded curve with \emph{positive horizontal ends} and by abuse of notation we sometimes write $\beta\subset \C$ for the image $\mathrm{im}(\beta)\subset\C$ of the curve.\par
Each such curve induces an $A_\infty$-inclusion functor (see \cite[Section 4.2]{BC-CobII} and \cite[Section 4]{Haug-T^2} for the graded and cohomological version)
$$
 \iota^{(\beta,\rho)}:\fuk(M)\To\fuk(\C\times M)
$$
which acts on objects by
$$
(L,\theta)\Mapsto (\beta\times L, \rho\oplus \theta).
$$
This functor induces a triangulated functor on the derived level (see~\cite[Lem\-ma~3.30]{seidel-book})
\begin{equation}\label{eq:inclusion-functor}
 \Ic^{(\beta,\rho)}:D\fuk(M)\To D\fuk(\C\times M). 
\end{equation}

The grading on such a curve $(\beta,\rho)$ is completely determined by its value on the \emph{top horizontal end}, that is by its constant value $\rho|_{[R,\infty)\times \{c_+\}}$.
For $r\in\R$ we denote by $(\beta, r)$ the graded curve where the grading is the unique grading $\rho_r$ on $\beta$ such that its value on the top horizontal end is equal to $r$.
If $r=0$ we will simply write $\iota^\beta$ (resp. $\Ic^\beta$) for the inclusion $\iota^{(\beta,0)}$ (resp. $\Ic^{(\beta,0)}$).\par

As in \cite{BC-Lef} we will fix two distinguished collections of smooth embedded curves in $\C$:
\begin{itemize}
     \item For $j\in \Z_{\geq 1}$, $\gamma_j$ is the curve with horizontal ends on the heights $1$ and $j$.
           More precisely
           $$
           \gamma_j(\R)\subset (R,\infty)\times [\frac12,\infty),\, \gamma_j(-1,1)\subset [R +1,R +2]\times [1,j]
           $$
           and
           $$
           \gamma_j((-\infty,-1])=[R +2,\infty)\times\{j\},\, \gamma_j([1,\infty))=[R +2,\infty)\times \{1\}
           $$
           where the constant $R\in\R$ is chosen in such a way that $\gamma_j$ completely lies in the region of $\C$ over which the symplectic and complex structure on $\C\times M$ are of split type.
     \item For $j\in \Z_{\geq 1}$, $\eta_j$ is the curve which looks exactly like $\gamma_j$ except that it has horizontal ends on heights $j+\frac12$ and $j-\frac12$.
\end{itemize}

	\begin{figure}[ht]
\def\svgwidth{0.3\columnwidth}
\centering
                        \includegraphics[width=0.35\columnwidth]{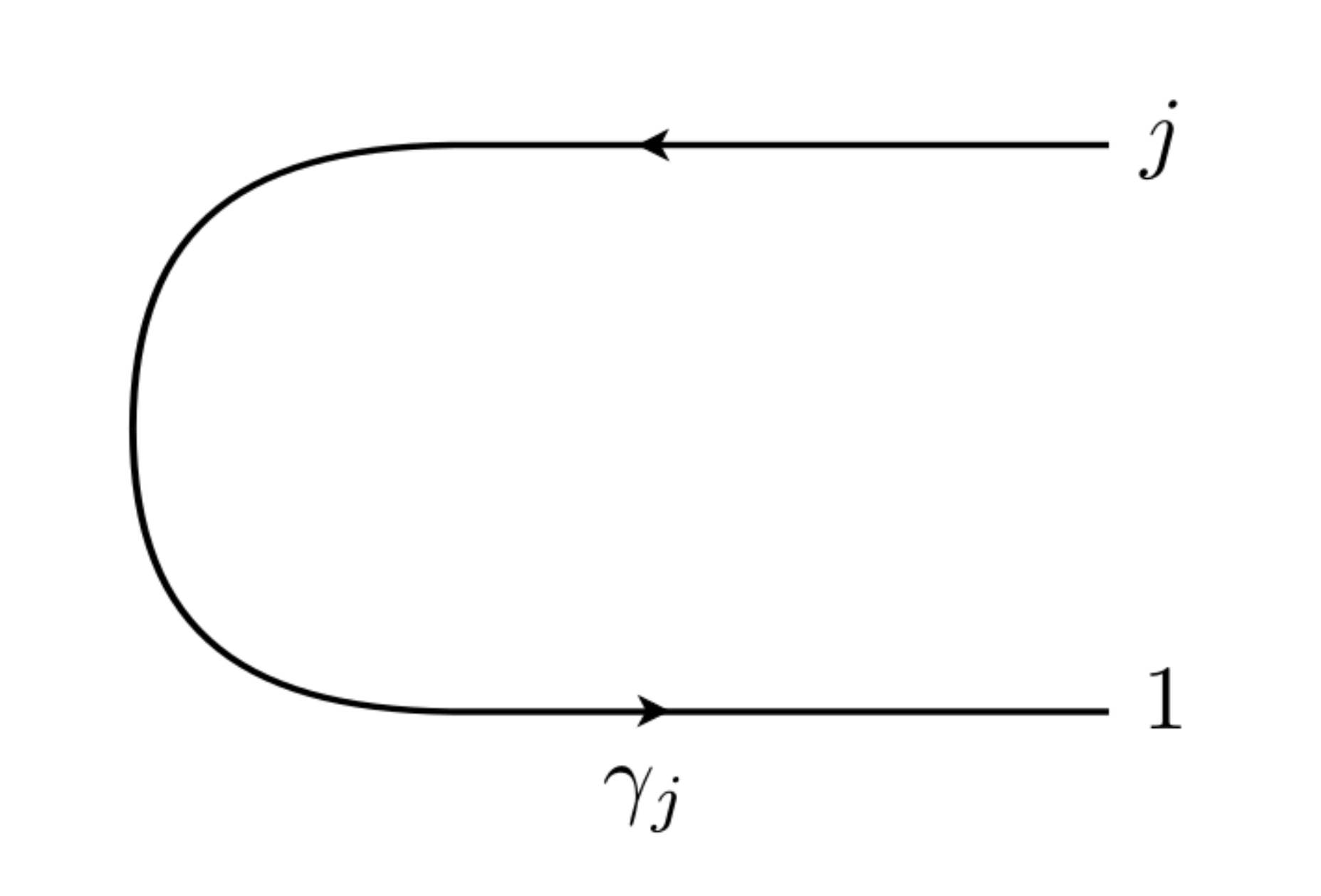}
			\caption{The image of the curve $\gamma_j$ with horizontal ends on heights $1$ and $j$.}
			\label{fig: gamma_jexample_graph}
\end{figure}

\begin{notation}
As we will mostly consider the inclusion functor along the distinguished curves $\gamma_j$, we abbreviate the notation and write
\begin{equation*}
  \label{eq:inclusion-notation}
  \iota^j:=\iota^{\gamma_j}:\fuk(M)\To\fuk(\C\times M) \quad\mkern-7mu \text{and}\quad\mkern-7mu \Ic^j:=\Ic^{\gamma_j}:D\fuk(M)\To D\fuk(\C\times M)
\end{equation*}
 for the respective inclusion functors.
 We say that an object of $D\fuk(\C\times M)$ is of \emph{height} $j$, if it is isomorphic to an object in the image of $\Ic^j$.
\end{notation}

Recall from \cite{BC-Lef} that there is a full and faithful embedding $e:\fuk(\C\times M) \to \fuk_{\frac12}(\C\times M)$ where $\fuk_{\frac12}(\C\times M)$ is the same category as $\fuk(\C\times M)$ except that cobordisms are also allowed to have horizontal ends on heights in $\frac12\Z$.
There is a corresponding Yoneda-embedding $\Y_{\frac12}:\fuk_{\frac12}(\C\times M) \to mod(\fuk_{\frac12}(\C\times M))$ and by $\Y':\fuk(\C\times M) \to mod(\fuk_{\frac12}(\C\times M))$ we will denote the composition $\Y'=\Y_{\frac12}\circ e$.
Moreover, $H^0(\Y'(\fuk(\C\times M))^{\wedge})$ is equivalent to the derived Fukaya category $D\fuk(\C\times M)$.
\begin{rem}
  In the complement of $K\times M\subset \widetilde{M}$, i.e. in the region where $\widetilde{\Om}$ is of the form $\pi_\C^*\dif z \wedge \pi_M^*\Om$, the map $\alpha_{\widetilde{\Om}^{\otimes 2}}:\mathcal{L}^{\widetilde{M}}\to S^1$ is equal to the product $\alpha_{\dif z^{\otimes 2}}\cdot \alpha_{{\Om}^{\otimes 2}}$ of the maps $\alpha_{\dif z^{\otimes 2}}:\mathcal{L}^{\C}\to S^1$ and $\alpha_{{\Om}^{\otimes 2}}:\mathcal{L}^M\to S^1$.
Hence, in this region, the grading $\theta_{\widetilde{L}}$ on a product Lagrangian $\widetilde{L}=\beta\times L\subset \widetilde{M}$ may be written as a sum $\theta_{\widetilde{L}}=\theta_\beta + \theta_L$ where $\theta_\beta$ respectively $\theta_L$ are gradings of the curve $\beta\subset \C$ respectively the Lagrangian $L\subset M$.
\end{rem}

\begin{rem}\label{rem:curve-grading-increase}
  The tangent space of the curve $\gamma_j$ at a point $t$ is given by $\R\gamma'_j(t)$ and
  $$
  \alpha_{\dif z^{\otimes 2}}(\R\gamma'_j(t)) = \frac{\gamma'_j(t)^2}{\norm{\gamma'_j(t)}^2}.
  $$
  As $t$ goes from $-\infty$ to $\infty$, $\frac{\gamma'_j(t)^2}{\norm{\gamma'_j(t)}^2}$ makes a full counterclockwise turn on $S^1$ from $1$ to $1$.
  Therefore the lift of $ \alpha_{\dif z^{\otimes 2}}\circ s_{\gamma_j}$ increases by exactly one.
  This means that the value of the grading of $(\gamma_j,r)$ on the bottom horizontal end is given by $r+1$.
  In particular, the difference between the grading on the bottom and top end is always $1$.\par

  Similarly, if we consider the curves $(\eta_j,r)$ and the horizontal curve $(\lambda_j:=\R\times \{j\},\rho')$ with the constant grading $\rho'\equiv s\in\R$, we get that the degree at the unique intersection point $y\in\C$ is $\deg_y(\eta_j,\lambda_j)= 1+ s-(r+\frac12) - \frac{1}{\pi}(\frac{\pi}{2}) = s-r$.
  This is because the K\"ahler angle between the curves $\eta_j$ and $\lambda_j$ is $\frac{\pi}{2}$ and the grading of $\eta_j$ at the point $y$ is given by $r+\frac12$.\par
 Combining these two observations leads to $\deg_z((\gamma_{j},r),(\gamma_{k},s))= s-r$, for $j\geq k$ and where $z\in\C$ is the unique intersection point of $\gamma_j$ with a slightly downward perturbed copy of $\gamma_k$.
\end{rem}

Let $\M'_{V,\theta_V}=\Y'(V,\theta_V)$ be the Yoneda-module associated to an object $(V,\theta_V)\in\fuk(\C\times M)$.
The pullback module $(\iota^{\eta_j,r})^*(\M'_{V,\theta_V})$ is exactly the Yoneda-module of the $j$-th end of $V$, shifted by $r$. 
More precisely, if $L_j$ is the $j$-th end of the cobordism $V$ and $(L,\theta)$ any object of $\fuk(M)$, then
\begin{align*}
(\iota^{\eta_{h_j},r})^*(\M'_{V,\theta_V})(L,\theta)&=CF^*((L,\theta+r),(L_j,\theta_{V,j})) \\
&=CF^*((L,\theta)[-r],(L_j,\theta_{V,j}))\\
&=CF^{*}((L,\theta),(L_j,\theta_{V,j})[r])\\
&=\Y(L_j,\theta_{V,j})[r](L,\theta)
\end{align*}
and therefore
\begin{equation}\label{eq:restriction-shift}
(\iota^{\eta_{h_j},r})^*(\M'_{V,\theta_V})=\Y(S^{r}(L_j,\theta_{V,j}))=\Y(L_j,\theta_{V,j})[r].  
\end{equation}

Hence $(\iota^{\eta_{h_j},r})^*$ takes Yoneda-modules to Yoneda-modules, and since the category $H^0(\Y'(\fuk(\C\times M)))$ is equivalent to the derived Fukaya category $D\fuk(\C\times M)$ it induces a triangulated functor on the derived level
\begin{equation}
  \label{eq:restriction-functor}
\Rc_{h_j}^{r}:D\fuk(\C \times M)\To D\fuk(M),  
\end{equation}
which we will call the $r$-\emph{restriction} to the $j$-th end.
The $0$-restriction $\Rc_{h_j}^0$ will also be denoted by $\Rc_{h_j}$.\par

\begin{rem}\label{rem:restriction-inclusion-relation}
  In particular, for $(L,\theta)\in\fuk(M)$ we have
  \begin{align*}
     \Rc_j^r\circ\Y(\iota^{{j}}(L,\theta)) = \Y(L,\theta_{\iota^{{j}}(L,\theta),2})[r] =  \Y(L,\theta)[r],
  \end{align*}
  and similarly
   \begin{align*}
     \Rc_1^r\circ\Y(\iota^{{j}}(L,\theta)) = \Y(L,\theta_{\iota^{{j}}(L,\theta),1})[r] =  \Y(L,\theta+1)[r] =  \Y(L,\theta)[r-1].
  \end{align*}

  To summarize, by Remark \ref{rem:curve-grading-increase}, and for given objects $(L,\theta)$ in $\fuk(M)$ and $\M$ in $D\fuk(M)$ we have the following relations:
  \begin{align*}
    &\Rc_j \circ\Y( \iota^{j}(L,\theta)) \cong \Y(L,\theta), & &\Rc_j\circ \Ic^{j}(\M) \cong \M \\
    & \Rc_1 \circ\Y( \iota^{j}(L,\theta))  \cong  \Y(L,\theta)[-1], & & \Rc_1\circ \Ic^{j}(\M) \cong \M[-1]\\
    &\Rc_1 \circ\Y( \iota^{(j,-1)}(L,\theta))  \cong  \Y(L,\theta), & & \Rc_1\circ \Ic^{(j,-1)}(\M) \cong \M.
  \end{align*}
\end{rem}


\section{A Stability Condition on $D\fuk(\C \times M)$.}\label{Section: Stab.Cond.}

In this section we will give the definition of a stability condition on a triangulated category.
Moreover will define a candidate for a stability condition on $D\fuk(\C\times M)$ under the assumption that there exists a stability condition on $D\fuk(M)$, state the main result, and discuss an example.\par

\subsection{Stability conditions and related notions}
We will now give the definition of stability condition and discuss some related concepts.
The definition of the Grothendieck group $K_0(\D)$ of a triangulated category $\D$, as well as some properties of exact triangles can be found in Appendix~\ref{subsubsec:cones}.

\begin{df}[Stability condition \cite{Bridgeland-SC}]\label{df:SC}
  A \emph{stability condition} on a triangulated category $\D$ is a pair $(Z,\P)$ consisting of an additive group homomorphism $Z:K_0(\D)\to \C$ and a collection of full additive subcategories $\P(\phi)\subset \D$ for each $\phi\in \R$, satisfying the following axioms:
  \begin{enumerate}[label=\textbf{(A\arabic*)}]
    \item\label{axiom-1} if $E\in \P(\phi)$ then $Z([E])=m(E)\exp (i\pi \phi)$ for some $m(E)\in\R_{>0}$,
    \item\label{axiom-2} for all $\phi\in \R$, $\P(\phi+1) = \P(\phi)[1]$,
    \item\label{axiom-3} if $\phi_1>\phi_2$ and $A_j\in \P(\phi_j)$ for $j=1,2$ then $\Hom_\D(A_1,A_2)=0$,
    \item\label{axiom-4} for each nonzero object $E\in \D$ there is a finite sequence of real numbers
      $$
      \phi_1>\phi_2>\ldots > \phi_n
      $$
      and a collection of exact triangles
      $$\xymatrix@R16pt@C8pt{
        0=E_0 \ar[rr]  & & E_1 \ar[dl]\ar[rr]  & & E_2 \ar[dl]\ar[r] & \cdots  \ar[r] & E_{n-1} \ar[rr]  & & E_n=E \ar[dl]\\
        & A_1\ar@{-->}[ul] & & A_2\ar@{-->}[ul] &&& & A_n\ar@{-->}[ul]
      }$$
      with $A_j\in \P(\phi_j)$ for all $j$.
      The dotted arrows represent morphisms of degree 1.
  \end{enumerate}
A collection $\P$ of full and additive subcategories $\P(\phi)\subset \D$ for each $\phi\in\R$ satisfying \ref{axiom-2}-\ref{axiom-4} is called a \emph{slicing} of $\D$.
The objects $E\in\P(\phi)$ are called \emph{semistable objects} of phase $\phi$.
An additive group homomorphism $Z:K_0(\D)\to \C$ satisfying \ref{axiom-1} is called \emph{central charge}.
\end{df}
The decomposition in \ref{axiom-4} is called \emph{Harder-Narasimhan filtration} or \emph{HN-fil\-tra\-tion}, it is unique (up to isomorphism of the objects $A_j\in \P(\phi_j)$) as a consequence of the axioms.
A proof of this fact can be found in Appendix~\ref{appendix:HN-filtration}.

\begin{notation}
  The phase of a semistable object $E$ is denoted by $\phi(E)$, i.e. $E\in\P(\phi(E))$.
\end{notation}

We would like to mention the following useful lemma whose proof can be found in \cite[Lemma 5.2]{Bridgeland-SC}.

\begin{lem}\label{lem:P(phi)-abelian}
  If $(Z,\P)$ is a stability condition on a triangulated category $\D$, then, for each $\phi\in \R$, the subcategory $\P(\phi)\subset \D$ is \emph{abelian}.
\end{lem}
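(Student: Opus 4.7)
The plan is to realize $\P(\phi)$ as a Serre subcategory of an abelian category arising from a bounded t-structure, and to conclude the abelianness of $\P(\phi)$ from that. First I would invoke \cite[Prop.~5.3]{Bridgeland-SC}: for any slicing $\P$ and any real number $s$, the full subcategory $\A := \P((s, s+1])$ of objects all of whose HN-factors have phases in $(s, s+1]$ is the heart of a bounded t-structure on $\D$. Applied with $s = \phi - 1$, this makes $\A = \P((\phi-1, \phi])$ an abelian category containing $\P(\phi)$ as a full additive subcategory. The proof of Proposition 5.3 itself follows from axioms \ref{axiom-2}--\ref{axiom-4} via a standard construction in which the HN-filtration plays the role of the truncation functors.

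Next I would show that $\P(\phi)$ is a Serre subcategory of $\A$, i.e., closed under subobjects, quotients, and extensions. Extension-closure of $\P(\phi)$ is a standard consequence of axiom \ref{axiom-3} and the uniqueness of HN-filtrations (Appendix~\ref{appendix:HN-filtration}): given an extension $0\to K\to E\to Q\to 0$ in $\A$ with $K,Q\in\P(\phi)$, any HN-factor of $E$ of phase different from $\phi$ would produce Hom-vanishing contradictions incompatible with the corresponding triangle in $\D$, so $E\in\P(\phi)$. For closure under subobjects and quotients, consider $f:A\to B$ in $\P(\phi)$ with kernel $K$, image $I$ and cokernel $Q$ in $\A$; the image $I$ is simultaneously a quotient of $A$ and a subobject of $B$, so semistability of $A$ forces $\phi(I)\geq\phi$ and semistability of $B$ forces $\phi(I)\leq\phi$, giving $\phi(I)=\phi$. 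Additivity of $Z$ along the short exact sequences $0\to K\to A\to I\to 0$ and $0\to I\to B\to Q\to 0$ then yields $\phi(K)=\phi(Q)=\phi$, and semistability of $K$ and $Q$ is inherited from that of $A$ and $B$ since any subobject of $K$ is a subobject of $A$ and any quotient of $Q$ is a quotient of $B$.

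With $\P(\phi)$ realised as a Serre subcategory of the abelian category $\A$, it is automatically abelian, with kernels, cokernels, and short exact sequences formed in $\A$. The main obstacle will be the t-structure statement of the first paragraph; this is Bridgeland's Proposition 5.3, whose proof is a non-trivial but direct consequence of the slicing axioms. Once that is in place, the remaining Serre-subcategory verifications are routine applications of the stability-theoretic reasoning for semistable objects.
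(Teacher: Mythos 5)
Your argument is correct and is essentially the proof of \cite[Lemma~5.2]{Bridgeland-SC}, which is exactly what the paper cites for this lemma in lieu of giving its own proof: realise $\P(\phi)$ inside the heart $\A=\P((\phi-1,\phi])$ of the bounded $t$-structure induced by the slicing, and check closure under kernels and cokernels there. The only cosmetic point is that you write $\phi(I)$, $\phi(K)$, $\phi(Q)$ before these objects are known to be semistable; the intended reading — bounds on the phases $\phi^{\pm}$ of their HN-factors coming from axiom \ref{axiom-3}, forced to collapse to the single value $\phi$ because all these factors have phases in $(\phi-1,\phi]$ and $Z$ is additive — is the standard one and completes the argument.
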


\begin{df}\label{df:stable-Jordan-Hoelder}
\begin{enumerate}
  \item Let $(Z,\P)$ be a stability condition on a triangulated category $\D$. 
    The simple objects (i.e. the objects without any proper subobjects or quotients) of the abelian category $\P(\phi)$ are called \emph{stable} objects of phase $\phi$.
  \item An abelian category $\mathcal{E}$ is of \emph{finite length} if every object $E\in\mathcal{E}$ admits a \emph{Jordan-H\"older filtration}, namely a finite sequence of subobjects
$$
0=E_0\subset E_{1}\subset \ldots E_{n-1}\subset E_n=E
$$
such that each quotient $E_j/E_{j-1}$ is a stable object.

  \item A stability condition $(Z,\P)$ on $\D$ is \emph{discrete} if the image of the central charge $Z:K_0(\D)\to \C$ is a discrete subgroup of $\C$.
  \item  We say that the object $Y$ is an \emph{extension} of $Z$ by $X$ if there exists an exact triangle $X\to Y\to Z\to X[1]$ in $\D$.
A subcategory $\D'$ of a triangulated category $\D$ is \emph{extension-closed}, if for each exact triangle $X\to Y\to Z\to X[1]$ with $X$ and $Z$ in $\D'$, we also have that $Y$ is in $\D'$ (see also \cite[1.2.6]{Faisceaux-Pervers}).

  \item For each interval $I\subset \R$, $\P(I)\subset \D$ is defined to be the extension-closed subcategory generated by the family of subcategories $\P(\phi)$, $\phi\in I$.
\end{enumerate}
\end{df}

\begin{rem}
 For any $\phi\in \R$, the two extension closed subcategories $\P((\phi,\phi+1])$ and $\P([\phi,\phi+1))$ of $\D$ are \emph{abelian}.
 This follows from the fact that a slicing $\P$ on a triangulated category $\D$ gives rise to $t$-structures (for more on $t$-structures see e.g. \cite[Sect.~1.3]{Faisceaux-Pervers} or \cite{huybrechts-introSC}) which have the two subcategories mentioned above as their \emph{hearts} (see~\cite[Def.~1.3.1]{Faisceaux-Pervers}) and the heart of a $t$-structure is always an abelian subcategory (see \cite[Thm.~1.3.6]{Faisceaux-Pervers}).
\end{rem}

The proof of the following auxiliary lemma is given in \cite[Lemma~4.3]{Bridgeland-SC}, it allows us to define the notion of a \emph{locally-finite} slicing \cite[Def.~5.7]{Bridgeland-SC} which will be of relevance in what follows.
The notion of \emph{quasi-abelian} category is not of prime importance for us, nevertheless we give its definition together with a few facts in Appendix~\ref{appendix:qac}.
For us, the only important fact about quasi-abelian categories is that one can define the notion of strict subobjects and strict quotients which allows us to make sense of the notion of \emph{finite-length} in a quasi-abelian category.

\begin{lem}
  Let $\P$ be a slicing on a triangulated category $\D$ and $I\subset \R$ any interval of length $<1$.
  Then, the full subcategory $\P(I)\subset \D$ is \emph{quasi-abelian} and the strict short exact sequences in $\P(I)$ are in one-to-one correspondence with those exact triangles in $\D$ for which all vertices are objects of $\P(I)$.
\end{lem}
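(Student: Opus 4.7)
The central step is to construct, for every morphism $f\colon E\to F$ in $\P(I)$, a kernel and a cokernel that lie in $\P(I)$. I would complete $f$ to an exact triangle $E\to F\to C\to E[1]$ in $\D$; rotating it exhibits $C$ as an extension of $E[1]\in \P(I+1)$ by $F\in \P(I)$, so every HN component of $C$ has phase in $I\cup (I+1)$. The length condition $\abs{I}<1$ is essential here: it forces $\sup I \le \inf (I+1)$, with equality occurring at most at a single endpoint (which is harmless by axiom A3). Consequently, the HN filtration of $C$ splits canonically along this phase gap into an exact triangle
\[
K[1]\To C \To Q \To K[2]
\]
with $K\in \P(I)$ collecting the ``high-phase'' pieces of $C$ (shifted down by one) and $Q\in \P(I)$ collecting the ``low-phase'' pieces.

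Applying the octahedral axiom to the triangles $E\to F\to C$ and $K[1]\to C\to Q$, I would produce an intermediate object $G$ fitting into exact triangles $K\to E\to G \to K[1]$ and $G\to F\to Q\to G[1]$, providing a factorization $E\to G\to F$ of $f$. A short argument using axiom A3 together with the constraints $E,F,K,Q\in \P(I)$ shows that $G$ itself lies in $\P(I)$. I would then argue that $K\to E$ serves as a kernel of $f$ in $\P(I)$ and $F\to Q$ as its cokernel: uniqueness in the universal properties is guaranteed because the obstruction Hom groups between objects whose phase-differences lie outside the allowed range vanish by A3.

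With kernels and cokernels in $\P(I)$ in hand, the remaining quasi-abelian axioms---stability of strict epimorphisms under pullback and of strict monomorphisms under pushout---follow by translating the pullback and pushout diagrams back into cone triangles in $\D$ and again invoking A3 to confine HN phases. For the claimed correspondence: a strict short exact sequence $0\to A\to B\to C\to 0$ in $\P(I)$ has vanishing kernel of $A\to B$ together with cokernel $C$, which in my construction forces $K=0$; hence the cone of $A\to B$ in $\D$ is $C$ itself, giving an exact triangle $A\to B\to C\to A[1]$ with all vertices in $\P(I)$. Conversely, any exact triangle with all three vertices in $\P(I)$ has cone $C\in\P(I)$ already, so the HN-splitting produces $K=0$ and $Q=C$, whence $A\to B\to C$ is a strict short exact sequence. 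The chief obstacle is the second paragraph: rigorously verifying that $G$ lies in $\P(I)$ and that the produced maps satisfy the universal properties of kernel and cokernel.
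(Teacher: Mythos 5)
First, note that the paper does not actually prove this lemma: it explicitly defers to Bridgeland's original argument (\cite[Lemma~4.3]{Bridgeland-SC}), so the comparison is with that proof. Your proposal contains a genuine error at its central step. The claim that ``every HN component of $C$ has phase in $I\cup(I+1)$'' is false: an extension of an object of $\P(I+1)$ by an object of $\P(I)$ can have HN factors whose phases lie strictly inside the gap $[\sup I,\,\inf I+1]$, because the extension-closed subcategory generated by semistables with phases in a \emph{non-convex} set is not the category of objects with HN factors in that set. Concretely, take $\D=D^b\Coh(X)$ for an elliptic curve $X$ with the stability condition of Example~\ref{ex:SC-on-DCoh-EllCurve}, and $I=(0.4,\,0.9)$. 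Then $\OO_X\in\P(\tfrac12)$ and $\OO_X(p)\in\P(\tfrac34)$ are both in $\P(I)$, and the cone of the canonical section $f:\OO_X\to\OO_X(p)$ is the skyscraper sheaf $\kk(p)$, semistable of phase $1\in(0.9,\,1.4)$. Whichever side of your ``phase gap'' you assign this factor to, the resulting piece fails to lie in $\P(I)$ (it would be $\kk(p)[-1]\in\P(0)$ or $\kk(p)\in\P(1)$), so the asserted triangle $K[1]\to C\to Q$ with $K,Q\in\P(I)$ simply does not exist. In this example the actual kernel and cokernel of $f$ in $\P(I)$ are both zero ($f$ is a typical non-strict morphism, which is exactly why $\P(I)$ is only quasi-abelian), and no single HN truncation of $C$ produces that answer. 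The final paragraph on the correspondence with exact triangles inherits the same defect, since it quotes the flawed splitting to conclude $K=0$.

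The repair requires either two \emph{different} truncations of $C$ --- roughly $K:=C^{\geq \inf I+1}[-1]$ for the kernel and $Q:=C^{\leq \sup I}$ for the cokernel, discarding the middle HN factors with phases in the gap (which measure the failure of $f$ to be strict) --- or, as in Bridgeland's proof, one embeds $I$ into a half-open interval $J$ of length one, works in the abelian heart $\A=\P(J)$, and truncates $\ker_{\A}f$ and $\mathrm{coker}_{\A}f$ back into $\P(I)$ using the torsion pairs on $\A$ determined by the endpoints of $I$; axiom \ref{axiom-3} then yields the universal properties, and a monomorphism is strict precisely when no truncation is needed, which gives the stated bijection with triangles having all vertices in $\P(I)$.
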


\begin{df}\label{df:locally-finite}
  A slicing $\P$ of a triangulated category $\D$ is \emph{locally-finite} if there exists $\eta\in (0,\frac12)$ such that for each $\phi\in\R$, the quasi-abelian subcategory $\P((\phi-\eta,\phi+\eta))\subset \D$ is of finite length.
  A stability condition $(Z,\P)$ on $\D$ is said to be \emph{locally-finite} if the slicing $\P$ is locally-finite.
\end{df}

\begin{rem}\label{rem:extension-closed+JH-filtration}
  \begin{enumerate}[label={(\roman*)}, ref=\ref{rem:extension-closed+JH-filtration}~(\roman*)]
    \item If $E,F\in \D$ are stable with $\phi(E)\geq \phi(F)$, then they are either isomorphic or $\Hom_\D(E,F)=0$.
          This follows from the definitions and Lemma~\ref{lem:P(phi)-abelian}.

    \item \label{rem:extension-closed}
  Let $(Z,\P)$ be a stability condition on a triangulated category $\D$ and let $E\to F\to C\to E[1]$ be an exact triangle with $E\in\P(\phi_1)$ and $F\in\P(\phi_2)$.
Suppose that $\phi_2\leq \phi_1+1$.
  By rotating the triangle we get an exact triangle $F\to C\to E[1]\to F[1]$ with $F,E[1]\in \P([\phi_2,\phi_1+1])$.
  Since $\P([\phi_2,\phi_1+1])$ is extension-closed by definition, it follows that $C\in \P([\phi_2,\phi_1+1])$ as well.
  A similar result holds if we assume that $\phi_2\geq \phi_1+1$.
  
    \item An abelian category is of finite length if and only if it is artinian and noetherian (see e.g.~\cite[Ex.~8.20]{Kashiwara-Schapira-Categories_and_Sheaves}).

    \item \label{rem:JH-filtration} If  $(Z,\P)$ is a locally-finite stability condition on $\D$, the HN-filtration of a non-zero object $E\in\D$ has a (finite) refinement where all the semistable factors $A_i$ are in fact stable and $\phi_1\geq\phi_2\geq\ldots\geq\phi_n$.
          This refinement is the \emph{Jordan-H\"older filtration}, it does not contradict the uniqueness of the HN-filtration since the phases are not strictly decreasing.
          The Jordan-H\"older filtration is in general not unique, the stable factors $A_i$ of $E$ are unique only up to permutation.
          If $E$ is semistable, then all the stable factors of $E$ have the same phase $\phi(E)$.
          In particular, if the stability condition is locally-finite, then each $\P(\phi)$ is a subcategory of finite length and each semistable object has a Jordan-H\"older filtration into stable factors of the same phase.
  \end{enumerate}
\end{rem}

\begin{notedf}
  For the following lemma, recall that $(E\to F)$ stands for the cone (see Appendix~\ref{subsubsec:cones}).
\end{notedf}

\begin{lem}\label{lem:semistable-extension-closed}
  Let $(Z,\P)$ be a locally-finite stability condition on a triangulated category $\D$.
  For each $\phi\in \R$ the subcategory $\P(\phi)$ is extension-closed, that is, if $E,F\in \P(\phi)$ and $E\to C\to F \to E[1]$ is an exact triangle, then $C\in\P(\phi)$ as well.
\end{lem}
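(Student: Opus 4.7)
The plan is to invoke axiom \ref{axiom-4} to produce the Harder--Narasimhan filtration of the cone $C$, and then to use axiom \ref{axiom-3} to force this filtration to consist of a single factor of phase $\phi$. Explicitly, I would fix the HN filtration
\[
0 = C_0 \to C_1 \to \ldots \to C_n = C
\]
with semistable factors $A_j \in \P(\psi_j)$ and strictly decreasing phases $\psi_1 > \psi_2 > \ldots > \psi_n$, and then aim to show that $n=1$ and $\psi_1 = \phi$, which gives $C \cong A_1 \in \P(\phi)$ as desired.

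The first step is to verify that the structural morphisms $\iota : A_1 = C_1 \to C$ coming from the bottom of the HN filtration and $\pi : C \to A_n$ coming from the top are both nonzero. This is not automatic: I would argue by contradiction, using the uniqueness of the HN filtration (Appendix \ref{appendix:HN-filtration}). Indeed, if $\iota = 0$ in the exact triangle $A_1 \to C \to C/C_1 \to A_1[1]$, then a standard triangulated-category argument shows the triangle is degenerate and produces an isomorphism $C/C_1 \cong C \oplus A_1[1]$; comparing HN decompositions on both sides yields, for $n \geq 2$, an absurd count of semistable factors, while the case $n=1$ is trivial since then $\iota$ is the identity. The argument that $\pi \neq 0$ is analogous, applied to the triangle $C_{n-1} \to C \to A_n \to C_{n-1}[1]$.

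The second step combines this with axiom \ref{axiom-3} by applying the functors $\Hom(A_1,-)$ and $\Hom(-,A_n)$ to the given exact triangle $E \to C \to F \to E[1]$. If $\psi_1 > \phi$, then by \ref{axiom-3} both $\Hom(A_1, E)$ and $\Hom(A_1, F)$ vanish, and the long exact sequence of Hom groups forces $\Hom(A_1, C) = 0$, contradicting $\iota \neq 0$; hence $\psi_1 \leq \phi$. Dually, if $\psi_n < \phi$, then $\Hom(E, A_n) = \Hom(F, A_n) = 0$ by \ref{axiom-3}, so the long exact sequence gives $\Hom(C, A_n) = 0$, contradicting $\pi \neq 0$; hence $\psi_n \geq \phi$. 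Since $\psi_1 \geq \psi_n$ is forced by the HN filtration and both are sandwiched at $\phi$, we get $\psi_1 = \psi_n = \phi$, which by strict decrease of the HN phases forces $n = 1$.

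I expect the main obstacle to be the first step: establishing that the canonical morphisms $\iota$ and $\pi$ produced by the HN filtration are nonzero. In a triangulated category this is subtler than in an abelian one, since a nonzero HN factor does not in itself guarantee a nonzero structural map to or from $C$; the uniqueness of the HN filtration must be used to rule out the degenerate splittings. I remark that local finiteness does not seem to be essential to this argument, but is presumably included for convenient use elsewhere.
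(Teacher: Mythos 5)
Your proof is correct, but it takes a genuinely different route from the paper. The paper's argument relies on local finiteness: it replaces $E$ and $F$ by iterated cones over their Jordan--H\"older (stable) factors, all of phase $\phi$, concatenates these into an iterated cone decomposition of $C\cong(F[-1]\to E)$, and then appeals to uniqueness of the HN filtration to conclude that $C$ is semistable of phase $\phi$. Your argument is the standard Bridgeland-style one: take the HN filtration of $C$ itself, show the extremal structural maps $\iota:A_1\to C$ and $\pi:C\to A_n$ are nonzero, and use axiom \ref{axiom-3} on the triangle $E\to C\to F\to E[1]$ to sandwich $\psi_1\le\phi\le\psi_n$, forcing $n=1$. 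As you observe, this shows the lemma holds for an arbitrary stability condition, with no local finiteness hypothesis, which is a genuine strengthening; the paper's route has the advantage of reusing the Jordan--H\"older machinery it needs elsewhere. Two small remarks. First, your Step 1 can be done more directly than by the splitting/contradiction argument: the chain of isomorphisms $\Hom(A_1,C_1)\cong\Hom(A_1,C_2)\cong\cdots\cong\Hom(A_1,C)$ established in the proof of the uniqueness results in Appendix~\ref{appendix:HN-filtration} (each step via the long exact sequence and \ref{axiom-3}) is induced by post-composition with the structural maps, so $\iota$ is the image of $\id_{A_1}$ and is therefore nonzero; dually for $\pi$. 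Your contradiction argument via the degenerate splitting $C/C_1\cong C\oplus A_1[1]$ and a factor count also works, but requires you to exhibit an HN filtration of the direct sum, which is an extra (if routine) construction. Second, you should dispose of the trivial case $C\cong 0$ separately, since \ref{axiom-4} only applies to nonzero objects; of course $0\in\P(\phi)$ because $\P(\phi)$ is additive.
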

\begin{proof}
  Since the stability condition is locally-finite, we may write
  $$
  E\cong (E_r[-1]\to\ldots \to E_1[-1]\to 0) \quad \text{and}\quad F\cong (F_s[-1]\to\ldots \to F_1[-1]\to 0)
  $$
  where $E_i$, for $1\leq i\leq r$, and $F_j$, for $1\leq j\leq s$, are the stable factors of the respective Jordan-H\"older filtrations of $E$ and $F$.
  These stable factors are all of phase $\phi$.
  By rotating the triangle $E\to C\to F \to E[1]$ we get $C \cong (F[-1]\overset{\alpha}{\to} E)$.
  If $\alpha$ is an isomorphism or the zero-morphism, there is nothing to show since $\P(\phi)$ is an additive subcategory.
  So, suppose that $\alpha\neq 0$ is not an isomorphism.
  We can express $C$ as
  \begin{align*}
    C & \cong ((F_s[-2]\to\ldots \to F_1[-2]\to 0) \to (E_r[-1]\to\ldots \to E_1[-1]\to 0))\\
      & \cong (F_s[-1]\to\ldots \to F_1[-1] \to E_r[-1]\to\ldots \to E_1[-1]\to 0)
  \end{align*}
  and hence $F_j$, $1\leq j\leq s$, and $E_i$, $1\leq i\leq r$, are the stable factors of the Jordan-H\"older filtration of $C$.
  The stable factors of $C$ are unique up to permutation and they are all of phase $\phi$.
  The Jordan-H\"older filtration is a refinement of the unique Harder-Narasimhan filtration, and hence we conclude that $C$ is indeed semistable of phase $\phi$.
\end{proof}

The following lemma, proven by Bridgeland in \cite{Bridgeland-SC_on_K3}, contains a sufficient condition for a stability condition to be locally-finite.
\begin{lem}[see {\cite[Lemma 4.4]{Bridgeland-SC_on_K3}}]\label{lem:discrete-SC-locally-finite}
Suppose that $(Z,\P)$ is a discrete stability condition on $\D$, and let $0<\epsilon<\frac12$.
Then for every $\phi\in\R$, the subcategory $\P((\phi-\epsilon,\phi+\epsilon))$ is of finite length.
In particular, the stability condition is locally-finite.  
\end{lem}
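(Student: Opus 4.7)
The plan is to combine positivity of a real-valued linear functional on $K_0(\D)$ with the discreteness of $Z$ to force every strictly monotone chain of strict subobjects in $\P((\phi-\epsilon,\phi+\epsilon))$ to be finite. First I would define $f\colon K_0(\D)\to\R$ by $f(\alpha):=\mathrm{Re}\bigl(e^{-i\pi\phi}Z(\alpha)\bigr)$. For a nonzero $E\in\P(\psi)$, axiom \ref{axiom-1} gives $f([E])=m(E)\cos(\pi(\psi-\phi))$, which is strictly positive whenever $|\psi-\phi|<\epsilon<\tfrac12$. The open sector $\mathcal{S}_\epsilon:=\{re^{i\pi\psi}:r>0,\ \psi\in(\phi-\epsilon,\phi+\epsilon)\}$ is a convex cone in $\C$ of angular width $2\pi\epsilon<\pi$, so for any nonzero $E\in\P((\phi-\epsilon,\phi+\epsilon))$, realised as an iterated extension of semistable objects of phases in $(\phi-\epsilon,\phi+\epsilon)$, additivity of $Z$ places $Z([E])\in\mathcal{S}_\epsilon$, and in particular $f([E])>0$.

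Next, for a strict short exact sequence $A\to E\to B$ in $\P((\phi-\epsilon,\phi+\epsilon))$ — corresponding, by the lemma immediately preceding Definition~\ref{df:locally-finite}, to an exact triangle in $\D$ all of whose vertices lie in $\P((\phi-\epsilon,\phi+\epsilon))$ — additivity gives $f([E])=f([A])+f([B])$, and the elementary bound $|z|\le f(z)/\cos(\pi\epsilon)$ valid on $\mathcal{S}_\epsilon$ yields $|Z([A])|\le f([E])/\cos(\pi\epsilon)$. So every strict subobject $A\hookrightarrow E$ has $Z([A])$ in the bounded region $\mathcal{S}_\epsilon\cap\{|z|\le f([E])/\cos(\pi\epsilon)\}\subset\C$. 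For a strictly increasing chain $0\neq E_1\subsetneq E_2\subsetneq\cdots$ of strict subobjects of $E$, each strict cokernel $E_{i+1}/E_i$ is nonzero, so $f([E_{i+1}])-f([E_i])=f([E_{i+1}/E_i])>0$; the classes $Z([E_i])$ are therefore pairwise distinct, and since $Z(K_0(\D))$ is discrete only finitely many values lie in the bounded region above, so the chain must terminate. The analogous argument, applied to $f([E])-f([E'_i])$, handles strictly decreasing chains $E\supsetneq E'_1\supsetneq E'_2\supsetneq\cdots$, giving both ACC and DCC on strict subobjects and hence finite length of $\P((\phi-\epsilon,\phi+\epsilon))$; local-finiteness of $\P$ then follows from Definition~\ref{df:locally-finite}.

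The step I expect to be most delicate is keeping the quasi-abelian bookkeeping honest: one has to verify that the relevant notion throughout is \emph{strict} subobject/quotient, that strictly proper containment of strict subobjects is equivalent to nonvanishing of the strict cokernel, and that the identity $[E]=[A]+[B]$ in $K_0(\D)$ comes from the ambient exact triangle furnished by the lemma preceding Definition~\ref{df:locally-finite}. Once these formal verifications are in hand, the substantive content is purely the ``mass in a bounded sector, classes in a discrete lattice'' finiteness argument sketched above.
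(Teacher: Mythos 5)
Your argument is correct and is essentially the standard proof of Bridgeland's Lemma~4.4, which the paper itself only cites rather than reproves: positivity of $\mathrm{Re}(e^{-i\pi\phi}Z(\cdot))$ on the sector of width $2\pi\epsilon<\pi$, additivity along the exact triangles corresponding to strict short exact sequences in $\P((\phi-\epsilon,\phi+\epsilon))$, and discreteness of $\mathrm{im}(Z)$ together force both chain conditions. The quasi-abelian bookkeeping you flag (strict cokernel nonzero for a proper strict subobject, and the correspondence between strict short exact sequences and ambient exact triangles) goes through exactly as you indicate, so there is no gap.
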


\begin{ex}\label{ex:SC-on-DCoh-EllCurve}
  The following is a well known example of a stability condition on the bounded derived category of coherent sheaves over a smooth projective curve.
  We will focus on the case of elliptic curves, since this is the situation that will be of use in Section~\ref{sec:example-T^2}.
  For more details see also \cite{huybrechts-introSC, Bridgeland-SC}.\par

  Let $C$ be an elliptic curve over a (not necessarily algebraically closed) field $\mathbf{k}$.
  By $\mathrm{Coh}(C)$ we denote the category of coherent sheaves over $C$ and $D^b\mathrm{Coh}(C)$ denotes the bounded derived category of coherent sheaves over $C$.
  If $E\in\mathrm{Coh}(C)$, then the cohomology groups $H^q(C,E)$ are finite dimensional vector spaces (\cite[Thm.~2.3.1]{LePotier-VB}), so we may define the \emph{Euler-Poincar\'e characteristic} of $E$ as
$$
\chi(E) = \sum_q (-1)^q\dim H^q(C,E) = \dim H^0(C,E)-\dim H^1(C,E).
$$
  Moreover, there is an open and dense subset $U\subset C$ over which $E|_U$ is a locally free $\mathcal{O}(U)$-module (\cite[Lemma~2.6.1]{LePotier-VB}).
  The \emph{rank} $\rk(E)$ of $E$ is defined as the rank of the locally free sheaf $E|_U$.
  Now, the \emph{degree} of a rank $r$ coherent sheaf $E$ on $C$ is defined as (see~\cite[Sect.~2.6]{LePotier-VB})
  \begin{align}\label{eq:def-deg}
  \deg (E) := \chi(E) - r \chi(\mathcal{O}_C).
  \end{align}
  Since $C$ is an elliptic curve we have that $\deg (E) = \chi(E)$.
  This follows from the fact that $\chi(\mathcal{O}_C)=\frac12 \chi(C) = 1- g(C) = 0$, which is a consequence of the Hodge decomposition (cf.~\cite{Griffiths-Harris-AG}) and Serre duality.\par 
  We define $Z':\mathrm{Coh}(C)\setminus \{0\}\to \{z\in\C\, |\, \mathrm{Im}(z)>0\}\cup \R_{<0}\subset \C$ by 
  $$
  Z'(E):= -\deg(E)+i\cdot \rk(E)
  $$
  for each non-zero coherent sheaf $E$ on $C$.
  The phase $\phi(E)\in (0,1]$ of a non-zero sheaf $E$ is determined by
  $$Z'(E)\in\exp(i\pi\phi(E))\cdot\R_{>0}.$$
  Moreover we set the phase of its shifts $E[k]\in D^b\mathrm{Coh}(C)$ to 
  $$\phi(E[k]):=\phi(E)+k$$
  for $k\in\Z$.\par
  By definition, a coherent sheaf $E\neq 0$ on $C$ is \emph{semistable} (resp.~\emph{stable}) if for every proper subsheaf $0\neq F\subset E$, the phase inequality $\phi(F)\leq \phi(E)$ (resp.~$\phi(F)< \phi(E)$) is satisfied.
  It is well known that if $E$ and $F$ are two semistable sheaves with $\phi(E)>\phi(F)$, then $\Hom(E,F)=0$ (see e.g.~\cite{LePotier-VB, huybrechts-introSC}).\par 
  By a result of Harder and Narasimhan \cite{Harder-Narasimhan}, every coherent sheaf $0\neq E$ on $C$ admits a unique finite (Harder-Narasimhan-) filtration
  $$
  0=E_0\subset E_1\subset\ldots\subset E_{n-1}\subset E_n=E,
  $$
  such that the quotients $A_j:=E_j/E_{j-1}$ are semistable of descending phases, i.e.~the inequalities $\phi(A_1)>\ldots >\phi(A_n)$ hold.
  In addition, by a classical result (see for example \cite[Cor.~3.15]{Huybrechts-Fourier-Mukai}), every object of $D^b\mathrm{Coh}(C)$ is isomorphic to a direct sum of shifted coherent sheaves over the smooth projective curve $C$, where $\mathrm{Coh}(C)$ is identified with its image under the full and faithful embedding into $D^b\mathrm{Coh}(C)$.
  This follows from the fact that $\mathrm{Coh}(C)$ is a hereditary (i.e.~$\Ext^i(\cdot,\cdot)=0$ for $i\notin\{0,1\}$) abelian category (see e.g.~\cite[Prop.~3.13]{Huybrechts-Fourier-Mukai}).
  More precisely, any object $E^\bullet$ of $D^b\mathrm{Coh}(C)$ (i.e.~a complex of coherent sheaves) is isomorphic to the direct sum of its shifted cohomology objects, that is
  \begin{align}\label{eq:direct-sum-of-cohomology-objects}
    E^\bullet \cong  \bigoplus_{i\in\Z}H^i(E^\bullet)[-i],
  \end{align}
  where $H^i(E^\bullet)[-i]\in \mathrm{Coh}(C)$ for $i\in \Z$.
  Moreover, if we consider the truncation of a complex $E^\bullet$ given by
  \begin{align*}
    \tau_{\leq i}(E^\bullet) := (\ldots \to E^{i-2} \overset{\delta^{i-2}}{\to} E^{i-1}\overset{\delta^{i-1}}{\to} \ker(\delta^i) \to 0\to \ldots),
  \end{align*}
  there are exact triangles
  \begin{align}\label{eq:truncation-triangles}
    \tau_{\leq i-1}(E^\bullet)\to \tau_{\leq i}(E^\bullet) \to H^i(E^\bullet)[-i]\to \tau_{\leq i-1}(E^\bullet)[1].
  \end{align}
  As any object $E^\bullet $of $D^b\mathrm{Coh}(C)$ is a bounded complex, it admits, by successively truncating, a finite filtration whose factors are shifts of the cohomology objects.\par
 A non-zero object of $D^b\mathrm{Coh}(C)$ is \emph{semistable}, if it is isomorphic to a shift of a semistable sheaf.\par

  From these observations, together with the fact that each non-zero coherent sheaf admits a Harder-Narasimhan filtration, we deduce that  each non-zero object of $D^b\mathrm{Coh}(C)$ admits a unique filtration by semistable objects of strictly descending phases as well (i.e.~a Harder-Narasimhan filtration).
  Also, the map $Z'$ induces a well-defined additive homomorphism
  $$
  Z:K_0(D^b\mathrm{Coh}(C))\To \C.
  $$
  
  In conclusion, we have obtained a stability condition on the bounded derived category $D^b\mathrm{Coh}(C)$ of coherent sheaves over a smooth projective curve.\par
  One can say more about this stability condition.
  Namely, since the image of the homomorphism $Z:K_0(D^b\mathrm{Coh}(C))\To \C$ is a \emph{discrete} subgroup of $\C$, it follows from Lemma~\ref{lem:discrete-SC-locally-finite}, that the stability condition is in fact \emph{locally-finite}.
Alternatively, this also follows from Theorem~\ref{thm:semistable-indecomposable-sheaf} below.\par
  In order to make this example more transparent we will collect some well known facts about (semi)stable sheaves on an elliptic curve, following \cite{Bodnarchuk-Burban-Drozd-Greuel} for the most part.\par
  Historically, Atiyah \cite{Atiyah-VBEC} classified vector bundles over elliptic curves before the notion of (semi)stable sheaves was introduced by Mumford \cite{Mumford-63}.
  Many of the results that follow have their roots in Atiyah's work but they are cast in a more up to date framework.
  A modern proof of the following result can be found in \cite[Thm.~10]{Bodnarchuk-Burban-Drozd-Greuel}.
  \begin{thm}\label{thm:semistable-indecomposable-sheaf}
    If $C$ is an elliptic curve over a field $\mathbf{k}$, then
    \begin{enumerate}
      \item Any indecomposable coherent sheaf $E\in \mathrm{Coh}(C)$ is semistable.
      \item If $E\in \mathrm{Coh}(C)$ is semistable and indecomposable then all its Jordan-H\"older factors are isomorphic.
      \item A coherent sheaf $E$ is stable if and only if $\mathrm{End}(E) = \mathbf{K}$ for some finite field extension $\mathbf{k}\subset \mathbf{K}$.            
    \end{enumerate}
  \end{thm}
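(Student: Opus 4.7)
The plan is to exploit the fact that on the elliptic curve $C$ the canonical bundle is trivial, $K_C\cong\mathcal{O}_C$, so Serre duality takes the particularly useful form $\Ext^1(F,G)\cong \Hom(G,F)^*$ for all $F,G\in\mathrm{Coh}(C)$. This, combined with the vanishing of $\Hom$ between semistable sheaves whose phases decrease strictly (already recalled in the excerpt), will drive the whole proof.

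For (i), I would argue by contradiction: assume the HN filtration $0=E_0\subset E_1\subset\ldots\subset E_n=E$ has length $n\geq 2$, with semistable factors $A_j=E_j/E_{j-1}$ of phases $\phi_1>\ldots>\phi_n$. For each $i<n$ I have $\phi(A_i)>\phi(A_n)$, hence $\Hom(A_i,A_n)=0$, and Serre duality gives $\Ext^1(A_n,A_i)=0$. Chasing the long exact sequences obtained by applying $\Hom(A_n,-)$ to the filtration of $E_{n-1}$ then yields $\Ext^1(A_n,E_{n-1})=0$, so the short exact sequence $0\to E_{n-1}\to E\to A_n\to 0$ splits. This contradicts indecomposability and forces $n=1$, i.e.~$E$ is semistable.

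For (ii), let $E$ be indecomposable semistable of phase $\phi$; all its Jordan--H\"older factors lie in the finite-length abelian category $\P(\phi)$. Fix a simple $S$ occurring in $E$ and let $E^S\subset E$ denote the maximal subobject whose JH factors are all isomorphic to $S$ (this exists by finite length). By maximality, no JH factor of the quotient $E/E^S$ is isomorphic to $S$. For any simple $S'\not\cong S$ appearing in $E/E^S$, Schur's lemma in $\P(\phi)$ gives $\Hom(S,S')=0$, and Serre duality upgrades this to $\Ext^1(S',S)=0$. Propagating through the JH filtrations of $E^S$ and $E/E^S$ via the usual long exact sequences shows $\Ext^1(E/E^S,E^S)=0$, so the sequence $0\to E^S\to E\to E/E^S\to 0$ splits. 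Indecomposability of $E$ then forces $E=E^S$.

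For (iii), the implication $(\Leftarrow)$ is the more elementary one. If $\mathrm{End}(E)\cong\mathbf{K}$ is a field, then $E$ is indecomposable, so by (i) it is semistable, and by (ii) all its JH factors are isomorphic to some stable $S$. If $E$ were not stable, the JH filtration would provide a subobject $E_1\cong S\hookrightarrow E$ and a quotient $E\twoheadrightarrow E/E_{n-1}\cong S$; picking any isomorphism between the two copies of $S$ and composing gives a nonzero endomorphism of $E$ whose image sits inside $E_1\subset E_{n-1}$ and is therefore killed by the quotient to $E/E_{n-1}$, hence it squares to zero. This contradicts $\mathrm{End}(E)$ being a field, so $E\cong S$ is stable. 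For $(\Rightarrow)$, stability makes $E$ simple in $\P(\phi)$, so Schur's lemma produces a division algebra structure on $\mathrm{End}(E)$, which is finite dimensional over $\mathbf{k}$ because $C$ is projective. The remaining and most delicate point is to upgrade \emph{division algebra} to \emph{field}; I would invoke the Fourier--Mukai equivalence $D^b\mathrm{Coh}(C)\simeq D^b\mathrm{Coh}(C^\vee)$, under which a stable sheaf is sent (after a shift) to the skyscraper sheaf of a closed point on $C^\vee$. Since the endomorphism ring of such a skyscraper is the residue field of the point, which is a finite extension of $\mathbf{k}$, and Fourier--Mukai is an equivalence, this gives the desired identification $\mathrm{End}(E)\cong\mathbf{K}$. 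The commutativity step in $(\Rightarrow)$ is the place I expect to be the main obstacle, requiring either Fourier--Mukai machinery or a hands-on analysis via Atiyah's classification of indecomposable bundles on $C$.
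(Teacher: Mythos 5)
First, a point of comparison: the paper does not prove this theorem at all --- it is quoted from Bodnarchuk--Burban--Drozd--Greuel \cite[Thm.~10]{Bodnarchuk-Burban-Drozd-Greuel} --- so there is no in-paper argument to measure your sketch against. What you propose is essentially the standard route from that reference: everything is driven by Serre duality with trivial canonical bundle, $\Ext^1(F,G)\cong\Hom(G,F)^*$, combined with the $\Hom$-vanishing between semistable sheaves of strictly decreasing phase. Your part (i) and part (iii)$(\Leftarrow)$ are complete and correct as sketched.

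Two steps need shoring up. In (ii), the assertion that ``by maximality, no JH factor of $E/E^S$ is isomorphic to $S$'' does not follow from maximality alone: a factor isomorphic to $S$ could a priori occur as a subquotient of $E/E^S$ without occurring as a subobject. You need the same $\Ext$-vanishing you use elsewhere: if $S$ occurs in $Q=E/E^S$, take the lowest step $Q_j/Q_{j-1}\cong S$ of a JH filtration; all factors of $Q_{j-1}$ are $\not\cong S$, so Schur plus Serre duality give $\Ext^1(S,Q_{j-1})=0$, hence $S$ splits off as a subobject of $Q_j\subset Q$, and its preimage in $E$ contradicts maximality of $E^S$. (Equivalently: the vanishing of $\Hom$ and $\Ext^1$ in both directions between non-isomorphic simples of the same phase shows, by induction on length, that every object of $\P^M(\phi)$-type abelian slice decomposes into its $S$-isotypic summands.) In (iii)$(\Rightarrow)$, the reduction ``a stable sheaf is sent to the skyscraper of a closed point'' silently assumes the charge $(r,d)$ is primitive. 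Over a non-algebraically-closed $\mathbf{k}$ --- and the theorem is stated for arbitrary $\mathbf{k}$ --- stable sheaves with $\gcd(r,d)=h>1$ exist (the skyscraper at a closed point of degree $h$ is simple, hence stable, of charge $(0,h)$), and deducing primitivity from stability at this stage would be circular, since the paper's Theorem~\ref{thm:equivalence-of-semistable-subcat} derives that equivalence \emph{from} the present theorem. The repair is to aim lower: choose $A\in SL(2,\Z)$ with $A(r,d)=(0,h)$ and a lift $\Psi$; then $\Psi(E)$ is a shift of an indecomposable torsion sheaf $F\cong\kk(\ell\cdot p)$, whose endomorphism ring $\mathcal{O}_{C,p}/\mathfrak{m}_p^\ell$ is commutative, and since $\mathrm{End}(E)\cong\mathrm{End}(F)$ is also a division ring (by simplicity of $E$ in its slice), it is a field, necessarily the residue field $\kappa(p)$, a finite extension of $\mathbf{k}$. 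This also sidesteps having to show that autoequivalences preserve stability.
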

  
Note that if $\mathbf{k}$ is algebraically closed, which we will assume from now on, then $\mathbf{K}=\mathbf{k}$.\par
Recall that, given a point $p\in C$, we can form the following short exact sequence of sheaves
\begin{align*}
  0\To \OO (-p)\To \OO_C\To \mathbf{k}(p)\To 0,
\end{align*}
where $\mathbf{k}(p)$ is the so called \emph{skyscraper sheaf} supported at $p$.
The skyscraper sheaf is of rank $0$ and the stalk of $\kk(p)$ at $p$ is isomorphic to $\kk$ since the stalk $\mathfrak{m}_p=\OO(-p)_p$ is precisely the maximal ideal in the local ring $\OO_{C,p}$.
Therefore, as a consequence of Theorem~\ref{thm:semistable-indecomposable-sheaf}, the skyscraper sheaves $\kk(p)$ are \emph{stable}.
Since for every point $p\in C$, the stalk $\OO_{C,p}$ is a local principal ideal domain it follows from the classification of finitely generated torsion modules over principal ideal domains (see e.g.~\cite[Thm.~7.5]{Lang-Algebra}) that indecomposable coherent torsion sheaves on $C$ are parametrized by points $p\in C$ and positive integers $\ell\in \Z_{>0}$ and are of the form $\kk(\ell\cdot p)$, where
\begin{align*}
  0\To \OO (-\ell\cdot p)\To \OO_C\To \mathbf{k}(\ell \cdot p)\To 0.
\end{align*}
By Theorem~\ref{thm:semistable-indecomposable-sheaf} the only stable torsion sheaves are the skyscraper sheaves $\kk(p)$.\par
For two coherent sheaves $E, F$ on $C$ we define the anti symmetric bilinear \emph{Euler form} by
\begin{align*}\label{eq:Euler-Form-1}
  \langle E, F \rangle := \rk(E)\deg(F) - \rk(F)\deg(E).
\end{align*}
Since the rank and the degree are both additive they are well defined on $K_0(\mathrm{Coh}(C))$.
Moreover, by \eqref{eq:direct-sum-of-cohomology-objects}, \eqref{eq:truncation-triangles} and Remark~\ref{rem:K-group-cone-sign-relation} the Euler form induces a well defined form on $K_0(D^b\mathrm{Coh}(C))$.
Note also that the action of the group of triangulated auto-equivalences $\mathrm{Aut}(D^b\mathrm{Coh}(C))$ on $K_0(D^b\mathrm{Coh}(C))$ preserves the Euler form (this follows from Remark~\ref{rem:Euler-form}).
Since $Z([\OO_C])=i$ and $Z([\kk(p)])=-1$, we get that the homomorphism
\begin{align*}
  Z:K_0(D^b\mathrm{Coh}(C))\To \C
\end{align*}
maps onto $\Z\oplus i\cdot \Z\subset \C$.
The pullback of the non-degenerate anti-symmetric bilinear form on $\Z\oplus i\cdot \Z$, which is induced by the restriction of the volume form $\frac{i}{2}\dif z \wedge \dif \bar{z}$ on $\C$, coincides with the Euler form.
Hence the radical
\begin{align*}
  \mathrm{rad}\langle\cdot,\cdot\rangle = \{E^\bullet\in D^b\mathrm{Coh}(C) \,|\, \langle E^\bullet,\cdot\rangle = 0\}
\end{align*}
coincides with the kernel of $Z$.
Therefore we obtain an isomorphism
 $$ K_0(D^b\mathrm{Coh}(C)) /  \mathrm{rad}\langle\cdot,\cdot\rangle \cong \Z^2$$
and a group homomorphism $\mathrm{Aut}(D^b\mathrm{Coh}(C))\to SL(2,\Z)$.

\begin{rem}\label{rem:Euler-form}
  The induced Euler form on $K_0(D^b\mathrm{Coh}(C))$ can be expressed as follows:
  \begin{align*}
    \langle E^\bullet, F^\bullet \rangle = \dim \Ext^0(E^\bullet, F^\bullet) - \dim \Ext^1(E^\bullet, F^\bullet).
  \end{align*}
  In order to see this, it is enough to check it on generators of the $K_0$-group, e.g. on locally free sheaves (see also~\cite[Prop.~2.6.6]{LePotier-VB}).
  Let $E,F$ be locally free sheaves and $E^\vee$ the dual of $E$.
  Then we have the following:
  \begin{align*}
    \dim \Ext^0(E, F) - \dim \Ext^1(E, F) &= \dim \Ext^0(\OO_C, E^\vee\mkern-3mu\otimes \mkern-2mu F) - \dim \Ext^1(\OO_C,E^\vee\mkern-3mu\otimes\mkern-2mu F)\\
    &= \chi(E^\vee\mkern-3mu\otimes \mkern-2mu F)\\
    &= \deg(E^\vee\mkern-3mu\otimes \mkern-2mu F)\\
    &= \rk(E)\deg(F) - \rk(F)\deg(E).
  \end{align*}
The third equality follows from \eqref{eq:def-deg} and the last equality follows for example by using the properties of the Chern character (see e.g.~\cite[Ex.~3.2.3]{Fulton-IT}).
\end{rem}

Using the concept of \emph{Fourier-Mukai-transforms} one can show the following result, see Mukai \cite{Mukai81-Duality} and \cite{Bodnarchuk-Burban-Drozd-Greuel} for a proof.
\begin{thm}\label{thm:Aut-to-SL-surj}
  The group homomorphism $\mathrm{Aut}(D^b\mathrm{Coh}(C))\to SL(2,\Z)$ is surjective.
\end{thm}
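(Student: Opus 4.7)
The plan is to exhibit, for each member of a generating set of $SL(2,\Z)$, an explicit triangulated auto-equivalence of $D^b\mathrm{Coh}(C)$ whose induced action on the quotient $K_0(D^b\mathrm{Coh}(C))/\mathrm{rad}\langle\cdot,\cdot\rangle\cong \Z^2$ realises that matrix. Recall that $SL(2,\Z)$ is generated by
$$ T = \begin{pmatrix} 1 & 1 \\ 0 & 1 \end{pmatrix}, \qquad S = \begin{pmatrix} 0 & -1 \\ 1 & 0 \end{pmatrix}, $$
so it is enough to produce two auto-equivalences whose induced matrices together generate the group.

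For the first matrix, I would use twisting by a line bundle. Fix a base point $p_0\in C$ and consider $\Phi_1 := (-)\otimes \mathcal{O}_C(p_0)$, which is a triangulated auto-equivalence of $D^b\mathrm{Coh}(C)$ with inverse $(-)\otimes \mathcal{O}_C(-p_0)$. Using the identities $\rk(E\otimes L)=\rk(E)$, $\deg(E\otimes L)=\deg(E)+\rk(E)\deg(L)$, and $\kk(p_0)\otimes L\cong \kk(p_0)$ for any line bundle $L$, one reads off that $\Phi_1$ acts on the basis $\{[\mathcal{O}_C],[\kk(p_0)]\}$ of $K_0/\mathrm{rad}$ by the matrix $M_1 := \bigl(\begin{smallmatrix}1 & 0\\ 1 & 1\end{smallmatrix}\bigr)$.

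For the second matrix, I would invoke the Fourier-Mukai transform. Using the autoduality $C\cong C^\vee$, $p\mapsto \mathcal{O}_C(p-p_0)$, let $\mathcal{P}$ denote the normalised Poincaré line bundle on $C\times C$ and set
$$ \Phi_2(-) := R\pi_{2,*}\bigl(\pi_1^*(-)\otimes \mathcal{P}\bigr). $$
Mukai's theorem \cite{Mukai81-Duality} asserts that $\Phi_2$ is a triangulated auto-equivalence of $D^b\mathrm{Coh}(C)$. The standard identifications $R\pi_{2,*}\mathcal{P}\cong \kk(p_0)[-1]$ and $\mathcal{P}|_{\{p_0\}\times C}\cong \mathcal{O}_C$ then yield $\Phi_2([\mathcal{O}_C]) = -[\kk(p_0)]$ and $\Phi_2([\kk(p_0)]) = [\mathcal{O}_C]$, so $\Phi_2$ acts on $K_0/\mathrm{rad}$ by $M_2 := \bigl(\begin{smallmatrix}0 & 1\\ -1 & 0\end{smallmatrix}\bigr) = S^{-1}$.

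The two matrices $M_1$ and $M_2$ generate $SL(2,\Z)$: we immediately recover $S = M_2^{-1}$, and a direct computation gives $M_2 M_1 M_2^{-1} = T^{-1}$, so $T$ too lies in the subgroup generated by $M_1$ and $M_2$. Since $S$ and $T$ generate $SL(2,\Z)$, surjectivity follows. The main technical obstacle is the verification of Mukai's theorem itself, namely that $\Phi_2$ really is a triangulated equivalence together with the careful bookkeeping of shifts in $R\pi_{2,*}\mathcal{P}\cong \kk(p_0)[-1]$; both are classical and are treated in detail in \cite{Mukai81-Duality} and, specifically for elliptic curves, in \cite{Bodnarchuk-Burban-Drozd-Greuel}.
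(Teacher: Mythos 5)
Your proposal is correct, and it is essentially the standard argument that the paper itself delegates to the cited references (Mukai \cite{Mukai81-Duality} and \cite{Bodnarchuk-Burban-Drozd-Greuel}): realise $\bigl(\begin{smallmatrix}1 & 0\\ 1 & 1\end{smallmatrix}\bigr)$ by twisting with a degree-one line bundle and $S^{-1}$ by the Fourier--Mukai transform with the normalised Poincar\'e kernel, then check these generate $SL(2,\Z)$. The matrix computations and the conjugation identity $M_2M_1M_2^{-1}=T^{-1}$ are all correct, so nothing is missing beyond the classical input you already attribute to Mukai.
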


Now, let $\phi\in (0,1)$ be any phase, such that $\P(\phi)\subset \mathrm{Coh}(C)$ is non-empty.
Pick $r,d\in \Z$ coprime, satisfying $r>0$ and $-d+i\cdot r\in e^{i\pi\phi}\cdot \R_{>0}$ (such $r$ and $d$ exist by definition of the phase $\phi$).
We can then find an element $A\in SL(2,\Z)$ which satisfies $A\cdot\left(\begin{smallmatrix}-d \\ r\end{smallmatrix}\right) = \left(\begin{smallmatrix}-1 \\ 0\end{smallmatrix}\right)$, and by the previous theorem a lift $\Psi\in \mathrm{Aut}(D^b\mathrm{Coh}(C))$ of $A$.
For any indecomposable sheaf $E\in \P(\phi)$, $\Psi(E)$ is an indecomposable object of $D^b\mathrm{Coh}(C)$ with $Z(\Psi(E))\in e^{i\pi}\cdot \R^*$ and therefore it is a shift, say by $k\in \Z$, of an indecomposable torsion sheaf.
As $\Psi$ is a triangulated auto-equivalence, the composition of $\Psi$ with the shift by $-k$ yields an equivalence of $\P(\phi)\simeq \P(1)$.
Recall also that $\P(\phi+1)=\P(\phi)[1]$.
Combining this we have shown the following theorem (cf.~\cite{Bodnarchuk-Burban-Drozd-Greuel}).

\begin{thm}\label{thm:equivalence-of-semistable-subcat}
  For any $\phi\in \R$ such that $\P(\phi)\subset D^b\mathrm{Coh}(C)$ is non-empty, there is a triangulated auto-equivalence of $D^b\mathrm{Coh}(C)$ inducing an equivalence of categories $\P(\phi)\simeq \P(1)$.
  In particular, an indecomposable coherent sheaf of rank $r$ and degree $d$ is stable if and only if $\gcd(r,d)=1$.
  Moreover, indecomposable semistable coherent sheaves are parametrized by their rank $r$ and degree $d$ and a point on the curve, i.e. by $(r,d)\in\Z^2$ and $p\in C$.
\end{thm}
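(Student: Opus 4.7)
The plan is to implement the strategy outlined in the paragraph preceding the theorem, transporting $\P(\phi)$ onto $\P(1)$ via an auto-equivalence obtained from Theorem~\ref{thm:Aut-to-SL-surj}. Using the axiom $\P(\phi+1)=\P(\phi)[1]$ I reduce to $\phi\in(0,1]$, the case $\phi=1$ being handled by the identity functor. For $\phi\in(0,1)$, since the image of $Z$ is $\Z\oplus i\Z$ and $\P(\phi)$ is non-empty, the ray $e^{i\pi\phi}\cdot\R_{>0}$ contains a primitive vector $(-d,r)\in\Z^2$ with $r>0$ and $\gcd(r,d)=1$. By Bezout I pick $a,b\in\Z$ with $ad-br=1$ and set $A=\bigl(\begin{smallmatrix} a & b \\ r & d\end{smallmatrix}\bigr)\in SL(2,\Z)$, for which $A\binom{-d}{r}=\binom{-1}{0}$, and lift $A$ to $\Psi\in\mathrm{Aut}(D^b\mathrm{Coh}(C))$ via Theorem~\ref{thm:Aut-to-SL-surj}.

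The central step is to check that, after composing with a suitable shift, $\Psi$ restricts to an equivalence $\P(\phi)\simeq\P(1)$. For any indecomposable $E\in\P(\phi)$, the object $\Psi(E)$ is indecomposable in $D^b\mathrm{Coh}(C)$, and the decomposition \eqref{eq:direct-sum-of-cohomology-objects} forces $\Psi(E)\cong F_E[k_E]$ for a single indecomposable coherent sheaf $F_E$; Theorem~\ref{thm:semistable-indecomposable-sheaf}(i) then gives that $F_E$ is semistable. The action of $A$ on $K_0(D^b\mathrm{Coh}(C))/\mathrm{rad}\langle\cdot,\cdot\rangle$ sends the class of $E$ into $\R_{<0}\cdot(-1,0)$, so $Z(F_E[k_E])\in\R_{<0}$; since the central charge of any nonzero coherent sheaf lies in $\{\mathrm{Im}>0\}\cup\R_{<0}$, a parity check forces $k_E$ even and $F_E\in\P(1)$. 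The point I expect to be the main obstacle is showing that $k_E$ is independent of $E$: I would fix a stable $E_0\in\P(\phi)$ (which exists because $\P(\phi)$ is of finite length by local-finiteness and Remark~\ref{rem:JH-filtration}), normalize $\Psi$ by replacing it with $[-k_{E_0}]\circ\Psi$, and then use Lemmas~\ref{lem:P(phi)-abelian} and~\ref{lem:semistable-extension-closed} together with the Jordan--H\"older description of $\P(\phi)$ from Remark~\ref{rem:JH-filtration} to propagate the normalization through iterated extensions by stable objects, concluding $k_E=0$ for every indecomposable $E\in\P(\phi)$. Running the same analysis for $\Psi^{-1}$ upgrades the resulting inclusion $\Psi(\P(\phi))\subseteq \P(1)$ to the desired equivalence of categories.

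Finally, I would deduce the two classification statements from the equivalence. Under $\Psi$, stable objects correspond to stable objects; by Theorem~\ref{thm:semistable-indecomposable-sheaf}(iii) the stable objects of $\P(1)$ are exactly the skyscraper sheaves $\kk(p)$, $p\in C$, whose class is the primitive vector $(-1,0)$. Pulling back through $A$, an indecomposable semistable sheaf of rank $r$ and degree $d$ is stable if and only if its class $(-d,r)$ is primitive, i.e.~$\gcd(r,d)=1$. An arbitrary indecomposable semistable torsion sheaf is of the form $\kk(\ell\cdot p)$ for $\ell\in\Z_{>0}$ and $p\in C$, and its preimage under $\Psi$ is indecomposable semistable with class $\ell\cdot(-d,r)$; assembling the resulting description over all phases yields the parametrization of indecomposable semistable coherent sheaves by $(r,d)\in\Z^2$ and $p\in C$ claimed in the theorem.
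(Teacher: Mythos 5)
Your route is the paper's route: reduce to $\phi\in(0,1)$ via $\P(\phi+1)=\P(\phi)[1]$, pick the primitive lattice vector $(-d,r)$ on the ray $e^{i\pi\phi}\cdot\R_{>0}$, lift a matrix $A$ with $A\left(\begin{smallmatrix}-d\\ r\end{smallmatrix}\right)=\left(\begin{smallmatrix}-1\\ 0\end{smallmatrix}\right)$ to $\Psi$ via Theorem~\ref{thm:Aut-to-SL-surj}, observe that $\Psi$ takes each indecomposable of $\P(\phi)$ to an even shift of an indecomposable torsion sheaf, and correct by a shift; the deduction of the two classification statements is also the paper's. The one step you flag as the main obstacle---that the shift $k_E$ is independent of $E$---is indeed the only nontrivial point; the paper does not prove it either, writing ``a shift, say by $k$'' and deferring to \cite{Bodnarchuk-Burban-Drozd-Greuel}.

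The mechanism you propose for that step, however, does not close it. Propagating the normalization through nonsplit extensions and Jordan--H\"older filtrations only constrains $k_E$ along chains of objects linked by nonzero morphisms or extension classes; by Theorem~\ref{thm:semistable-indecomposable-sheaf}~(ii) all Jordan--H\"older factors of an indecomposable semistable sheaf are isomorphic to a single stable $S_E$, so your argument reduces the claim to ``all stable objects of $\P(\phi)$ have the same shift''---and that base case is exactly what is not covered. Two non-isomorphic stable sheaves of the same slope (for instance the $\Psi$-preimages of skyscrapers at distinct points) have $\Hom=\Ext^1=0$ in both directions, so no morphism or extension connects them to your chosen $E_0$ and the normalization cannot propagate. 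To actually close the gap you need extra input: either the structure of the lift (the kernel of $\mathrm{Aut}(D^b\mathrm{Coh}(C))\to SL(2,\Z)$ is generated by $\mathrm{Pic}^0$, automorphisms of $C$ and even shifts, none of which shifts different skyscrapers by different amounts), or a linking object---e.g.\ set $G:=\Psi^{-1}(\OO_C)\cong G_0[m]$ with $G_0$ a sheaf, and note that $\Hom(G,E[-k_E])\cong\Hom(\OO_C,\kk(p))\neq 0$ forces $-k_E-m\in\{0,1\}$ for every stable $E\in\P(\phi)$, which together with the parity of $k_E$ pins $k_E$ down uniformly. With such an argument inserted, the rest of your proof goes through as written.
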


This together with the classification of indecomposable torsion sheaves provides us with a good understanding of what the (semi)stable objects of $D^b\mathrm{Coh}(C)$ are.

\end{ex}

\subsection{Construction of the stability condition on $D\fuk(\C\times M)$}

Suppose now that $(Z^M,\P^M)$ is a given stability condition on the derived Fukaya category $D\fuk(M)$ of $M$.
Given any $\kappa\in 2\cdot \Z_{>1}$ (i.e. $\kappa \geq 4$ is an \emph{even} integer), we define the slicing $\P_\kappa$ on $D\fuk(\C\times M)$ as follows:

\begin{center} \emph{for each $\phi\in\R$, $\P_\kappa(\phi)$ is the full \emph{additive} subcategory generated by the objects}\end{center}
\begin{equation}\label{df:slicing-phase}
\{ \Ic^{{h}}X[r] \, |\, r\in \Z, \, h\in\Z_{>1},\, X\in\P^M(\phi-r+\kappa h) \}.  
\end{equation}

\begin{notation}
From now on we will fix $\kappa\in 2\cdot \Z_{>1}$ and simply write $\P$ for $\P_\kappa$, leaving the dependency on $\kappa$ implicit.
\end{notation}
\begin{rem}
  The reason why we choose $\kappa\in 2\cdot \Z_{>1}$ will be made clear in the proof of Theorem~\ref{thm:(Z,P)-is-SC}.
\end{rem}
The class of Yoneda-modules generates $K_0(D\fuk(\C\times M))$. 
Therefore, in order to define the central charge it is enough to define it on Yoneda-modules, extend it linearly, and show that it is well-defined.
Recall that the Yoneda-embedding is cohomologically full and faithful and we will sometimes, by abuse of notation, write $(V,\theta_V)$ both for the corresponding object of $\fuk(\C\times M)$ as well as for its Yoneda-module $\Y(V,\theta_V)$.
From the context it should be clear what is meant. 

\begin{rem}
 It is worth pointing out the relation of the phases of semistable objects of $D\fuk(M)$ and of $D\fuk(\C\times M)$ more explicitly.
 To this end we will denote the collections of all semistable objects in the respective derived categories by
 $$
 \P^M_\mathrm{tot}:=\bigcup_{\phi\in \R}\P^M(\phi)\subset D\fuk(M) , \quad\text{and}\quad \P_\mathrm{tot}:=\bigcup_{\phi\in \R}\P(\phi)\subset D\fuk(\C\times M)
 $$
 and the maps that associate to each semistable object its phase by
 $$
  \Phi^M:\P^M_\mathrm{tot}\to\R:\ X\mapsto \phi(X)\quad\text{and} \quad \Phi^{\C\times M}:\P_\mathrm{tot}\to\R:\ E\mapsto \phi(E).
 $$
 Then, for any semistable object $X$ of $D\fuk(M)$ and any $h\in\Z_{>1}$, we get the relation
 \begin{align}\label{eq:inclusion-phase-relation}
 \Phi^{\C\times M}(\Ic^h X) = \Phi^M(X) - \kappa h.
 \end{align}
 In particular, note that if $(L,\theta)\in\P^M(\phi)$ then $\Ic^{h}(L,\theta)\in\P(\phi-\kappa h)$. 
\end{rem}

\begin{rem}
  Recall that the direct sum $E\oplus F$ (i.e. the cone over the zero morphism) of two semistable objects $E,F\in\P(\phi)$ is again semistable of phase $\phi$, since $\P(\phi)$ is an \emph{additive} subcategory.
\end{rem}

Define the central charge on Yoneda-modules as follows:
$$
Z:K_0(D\fuk(\C\times M)) \To \C\,;\quad [V,\theta_V]_{K_0}\Mapsto \sum_{j\geq 2} -Z^M([\Rc_{h_j}(V,\theta_V)[-1]]_{K_0}),
$$ 
and extend it linearly.
The motivation for this definition comes from the cone decomposition \eqref{prop: cone decomp in MxC} which we will discuss in Section~\ref{sec:cone_decomp_Lagcob}.
In the proof of Theorem~\ref{thm:(Z,P)-is-SC} we will see that $Z$ is well-defined (see Sec.~\ref{Sec:proof_of_main_result}).

Notice that $(L,\theta)\cong ((L,\theta)[-1]\to 0)$, hence in the Grothendieck group we have that $[L,\theta]_{K_0}=-[(L,\theta)[-1]]_{K_0}$ and therefore we can rewrite
\begin{align}\label{eq:def_central_charge}
Z([V,\theta_V]_{K_0}) = \sum_{j\geq 2} Z^M([\Rc_{h_j}(V,\theta_V)]_{K_0}).  
\end{align}

We now state the main theorem whose proof is deferred to Section~\ref{Sec:proof_of_main_result}.
\begin{thm}\label{thm:(Z,P)-is-SC}
  If $(Z^M,\P^M)$ is a locally-finite stability condition on $D\fuk(M)$ and $\kappa\in 2\cdot \Z_{>1}$ then $(Z,\P_\kappa)$, as defined above, is a locally-finite stability condition on $D\fuk(\C\times M)$.
\end{thm}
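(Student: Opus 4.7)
The plan is to verify the four axioms of Definition~\ref{df:SC} for the pair $(Z,\P_\kappa)$ in turn, closing with local finiteness. As a preliminary matter, $Z$ is well defined on $K_0(D\fuk(\C\times M))$ because each restriction functor $\Rc_{h_j}$ is triangulated and so descends to the Grothendieck group, while for any given Yoneda-module only finitely many heights contribute. For axiom~\ref{axiom-1} it suffices to evaluate $Z$ on a generator $\Ic^h X[r]$ with $X\in\P^M(\phi-r+\kappa h)$: by Remark~\ref{rem:restriction-inclusion-relation} the only nonzero contribution at a height $h_j$ with $j\geq 2$ is $\Rc_h\circ\Ic^h(X)\cong X$, so $Z([\Ic^h X[r]])=(-1)^r Z^M([X])=(-1)^r m(X)\exp(i\pi(\phi-r+\kappa h))$, and since $\kappa$ is even this collapses to $m(X)\exp(i\pi\phi)$ with $m(X)>0$; additivity transports the identity to all of $\P_\kappa(\phi)$. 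Axiom~\ref{axiom-2} is formal, because $\Ic^h$ commutes with the shift up to isomorphism and so $(\Ic^h X[r])[1]=\Ic^h X[r+1]$ is a generator of $\P_\kappa(\phi+1)$ exactly when $\Ic^h X[r]$ is a generator of $\P_\kappa(\phi)$.

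For axiom~\ref{axiom-3}, let $A_i=\Ic^{h_i}X_i[r_i]\in\P_\kappa(\phi_i)$ with $\phi_1>\phi_2$. The Floer computation on the products $\gamma_{h_i}\times L_i$, combined with the grading identity $\deg_z((\gamma_j,r),(\gamma_k,s))=s-r$ of Remark~\ref{rem:curve-grading-increase}, reduces $\Hom_{D\fuk(\C\times M)}(A_1,A_2)$ to a shift of $\Hom_{D\fuk(M)}(X_1,X_2)$, where the shift is an explicit integer determined by $r_1,r_2$ and the relative position of the curves $\gamma_{h_i}$. Writing the $M$-phases of $X_1,X_2$ as $\phi_i-r_i+\kappa h_i$, one checks that the combination $(\phi_1-\phi_2)-(r_1-r_2)+\kappa(h_1-h_2)$ together with the integer shift inserted by the Floer reduction is strictly positive whenever the $A_i$ are nonzero, because $\kappa\geq 4$ is even. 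This strict inequality is precisely what axiom~\ref{axiom-3} for $(Z^M,\P^M)$ requires, yielding the vanishing on generators; additivity takes care of direct sums.

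The genuine difficulty is axiom~\ref{axiom-4}, and here I would follow the two-step strategy of the introduction. First, suppose $W$ is the Yoneda-module of a positively ended cobordism $V:(L_1,\ldots,L_s)\leadsto\emptyset$. The Biran--Cornea iterated cone decomposition from Proposition~\ref{Prop:conedecomp} writes $W$ as an iterated cone on the geometric factors $\Ic^{h_j}(L_j)$, which need not be semistable. Applying the triangulated inclusion functor $\Ic^{h_j}$ to the Harder--Narasimhan filtration of each $L_j$ in $D\fuk(M)$ produces a refined iterated cone on semistable objects of $D\fuk(\C\times M)$, but with phases not yet strictly decreasing. The core algebraic manoeuvre is to reorder this tower: adjacent factors of equal phase are amalgamated using the extension-closure of $\P(\phi)$ (Lemma~\ref{lem:semistable-extension-closed}), and adjacent pairs with phases in the wrong order are interchanged using the $\Hom$-vanishing already established in axiom~\ref{axiom-3}. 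I expect this reordering to be the main obstacle, because every rebracketing is accompanied by shifts that must be tracked carefully (Remark~\ref{rem:shifting-iterated-cones}); nevertheless finitely many moves yield a cone decomposition with strictly decreasing phases, which is the required HN-filtration. For an arbitrary (not necessarily geometric) object $W$, one first decomposes $W$ as an iterated cone of geometric objects—which generate $D\fuk(\C\times M)$—applies the geometric case to each factor, and then reshuffles the union of the resulting HN-factors in the same way, with the extra shift bookkeeping demanded by the additional layer of brackets.

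Finally, local finiteness of $(Z,\P_\kappa)$ is inherited from $(Z^M,\P^M)$: any object of $\P_\kappa((\phi-\eta,\phi+\eta))$ with $\eta$ sufficiently small is a finite extension of generators $\Ic^h X[r]$ in which $X$ lies in some window $\P^M((\psi-\eta,\psi+\eta))$, and by the hypothesis only finitely many values of $\psi$ occur and each corresponding window is of finite length; hence $\P_\kappa((\phi-\eta,\phi+\eta))$ is itself of finite length.
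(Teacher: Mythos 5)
Your overall architecture matches the paper's (well-definedness of $Z$ via the triangulated restriction functors, the axioms in order, the Biran--Cornea decomposition refined by HN-filtrations of the ends for \ref{axiom-4}, and inheritance of local finiteness), but two steps as written do not go through. For \ref{axiom-3}, the reduction of $\Hom_{D\fuk(\C\times M)}(A_1,A_2)$ to a shift of $\Hom_{D\fuk(M)}(X_1,X_2)$ is only available when $h_1\geq h_2$; when $h_1<h_2$ the Hom vanishes for a purely geometric reason (after the downward perturbation of the second factor the projections of the curves to $\C$ are disjoint, so the Floer complex is zero), and it is \emph{not} comparable to any shift of $\Hom_{D\fuk(M)}(X_1,X_2)$. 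Accordingly, your claim that $(\phi_1-\phi_2)-(r_1-r_2)+\kappa(h_1-h_2)$ plus the Floer shift is strictly positive whenever the $A_i$ are nonzero is false: for $h_1<h_2$ the term $\kappa(h_1-h_2)\leq -4$ can make it negative while $\Hom_{D\fuk(M)}(X_1,X_2)$ is nonzero. The correct statement is the dichotomy of the paper's Lemma~\ref{lem:hom-inclusion}. Moreover, your Floer computation only applies when the $X_i$ are geometric, whereas semistable objects of $D\fuk(M)$ need not be geometric; the paper spends a second step of that lemma (an induction over cones of geometric objects using the five-lemma) to extend the Hom comparison to arbitrary objects, and this extension is missing from your argument.

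For \ref{axiom-4}, the reordering is not reducible to ``amalgamate equal phases, swap wrongly ordered ones using \ref{axiom-3}.'' If adjacent semistable factors satisfy $\phi_j<\phi_{j+1}\leq\phi_j+1$, axiom \ref{axiom-3} does \emph{not} force the connecting morphism $\wt{X}_{j+1}[-2]\to\wt{X}_j[-1]$ (source of phase $\phi_{j+1}-2$, target of phase $\phi_j-1$) to vanish, so the two factors cannot simply be interchanged. The paper's resolution is that this regime cannot occur across a jump in height: if the connecting morphism is nonzero one applies $\Rc_1$, invokes \ref{axiom-3} in $D\fuk(M)$, and uses $\kappa>2$ to conclude $\phi_{j+1}<\phi_j-1$, i.e.\ the phases were already strictly decreasing there. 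This is precisely why the parameter $\kappa\in 2\cdot\Z_{>1}$ is introduced, and your proposal never uses it in \ref{axiom-4}, which is where it is essential. The same regime reappears in the general (non-Yoneda) case, where the paper isolates the problematic block, restricts it to a common height, uses extension-closedness of $\P^M$ on an interval of length $1$ to extract an HN-filtration of the restricted block, and re-includes via $\Ic^h$; ``reshuffling with shift bookkeeping'' does not by itself produce strictly decreasing phases when nonzero connecting morphisms obstruct the swaps.
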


\subsection{A consequence and an example}\label{sec:consequence-example}

In \cite{Le-Chen} Le and Chen show that if a triangulated category $\D$ admits a bounded $t$-structure, then it is \emph{split-closed} (or \emph{Karoubian}).
Moreover, the existence of a stability condition on a triangulated category implies the existence of a bounded $t$-structure on it (see \cite[Prop. 2.4]{huybrechts-introSC}).
Combining these results with Theorem~\ref{thm:(Z,P)-is-SC} we get the following corollary.

\begin{cor}\label{cor:DF(M)loc.fin.SC-DF(CxM)split-closed}
  If $D\fuk(M)$ admits a locally-finite stability condition, then the derived Fukaya category of cobordisms $D\fuk(\C\times M)$ is \emph{split-closed}.
\end{cor}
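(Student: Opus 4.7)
The plan is to assemble the corollary directly from three inputs that are already cited in the excerpt, so there is essentially no new content to prove beyond verifying that the hypotheses line up. First I would apply Theorem~\ref{thm:(Z,P)-is-SC} to the assumed locally-finite stability condition $(Z^M,\P^M)$ on $D\fuk(M)$. Choosing any even integer $\kappa\in 2\cdot\Z_{>1}$, the theorem produces a locally-finite stability condition $(Z,\P_\kappa)$ on the derived Fukaya category of cobordisms $D\fuk(\C\times M)$. At this stage we have transported the hypothesis from $M$ to $\C\times M$, and the remainder is a purely triangulated-categorical statement about a triangulated category that admits a stability condition.

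Next I would invoke \cite[Prop.~2.4]{huybrechts-introSC}, which says that any stability condition on a triangulated category $\D$ determines a bounded $t$-structure on $\D$ (its heart is $\P((\phi,\phi+1])$ for any chosen $\phi\in\R$; boundedness follows from the existence of Harder-Narasimhan filtrations provided by axiom \ref{axiom-4}). Applied to $(Z,\P_\kappa)$, this equips $D\fuk(\C\times M)$ with a bounded $t$-structure. Finally, the result of Le and Chen \cite{Le-Chen} states that any triangulated category admitting a bounded $t$-structure is split-closed (Karoubian), so $D\fuk(\C\times M)$ is split-closed, which is the conclusion.

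There is essentially no obstacle: the work has been done in Theorem~\ref{thm:(Z,P)-is-SC} and in the cited references. The only thing to verify in the write-up is the bookkeeping step that local-finiteness is preserved (which is explicit in the statement of Theorem~\ref{thm:(Z,P)-is-SC}) and the compatibility of the shift axiom \ref{axiom-2} with the $t$-structure construction, both of which are standard.
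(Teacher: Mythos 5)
Your proposal is correct and follows exactly the paper's own argument: apply Theorem~\ref{thm:(Z,P)-is-SC} to obtain a (locally-finite) stability condition on $D\fuk(\C\times M)$, pass to a bounded $t$-structure via \cite[Prop.~2.4]{huybrechts-introSC}, and conclude split-closedness from the result of Le and Chen \cite{Le-Chen}. Nothing further is needed.
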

\begin{ex}\label{ex:T^2-cob-split-closed}
  By homological mirror symmetry for the 2-torus $T^2$ there is an equivalence of triangulated categories (see~\cite{Abouzaid-Smith-T^4})
  $$
  D^b\mathrm{Coh}(X)\simeq D^\pi\fuk^{\sharp}(T^2).
  $$
  Here $X$ denotes the \emph{Tate curve} which is an elliptic curve over the Novikov field $\Lambda$ (cf.~\cite{Abouzaid-Smith-T^4}) and $D^\pi\fuk^{\sharp}(T^2)$ is the split-closure of the derived Fukaya-category.
  The $\sharp$ signifies that we take extra data, comprised of certain local systems and Pin-structures, into account (cf.~Remark~\ref{rem:extra-data}).
  For more details we refer to Haug \cite{Haug-T^2}.
  Haug has shown that the inclusion $D\fuk^{\sharp}(T^2)\hookrightarrow D^\pi\fuk^{\sharp}(T^2)$ is an equivalence (\cite[Cor.~7.5]{Haug-T^2}) and hence we obtain a triangulated equivalence $D^b\mathrm{Coh}(X)\simeq D\fuk^{\sharp}(T^2)$.
  In Example~\ref{ex:SC-on-DCoh-EllCurve} we have seen that the bounded derived category $D^b\mathrm{Coh}(X)$ of coherent sheaves over the elliptic curve $X$ admits a locally-finite stability condition.
  Thus, by the above equivalence, $D\fuk^{\sharp}(T^2)$ admits a locally-finite stability condition as well.
  Theorem~\ref{thm:(Z,P)-is-SC} implies that the derived Fukaya category of cobordisms $D\fuk^{\sharp}(\C\times T^2)$ admits a locally-finite stability condition, and moreover by Corollary~\ref{cor:DF(M)loc.fin.SC-DF(CxM)split-closed} it is \emph{split-closed}.
\end{ex}


\section{Iterated cone decompositions induced by Lagrangian Cobordisms}\label{sec:cone_decomp}

Before we can begin with the proof of Theorem~\ref{thm:(Z,P)-is-SC} we need another ingredient.
Namely the fact that a Lagrangian cobordism gives rise to an iterated cone decomposition in the derived Fukaya category of cobordisms.
This was shown by Biran and Cornea in \cite[Prop.~4.3.1]{BC-Lef} in the ungraded setting and over the coefficient field $\Z/2\Z$.
We will explain the main idea of the proof of this result and discuss some remarks on iterated cone decompositions.

\subsection{Iterated cone decompositions in $D\fuk(\C\times M)$ via Lagrangian cobordisms}\label{sec:cone_decomp_Lagcob}
We will now adapt Proposition 4.3.1 of \cite{BC-Lef} to our setting.
Haug \cite[Section~4]{Haug-T^2} adapted an analogous iterated cone decomposition of \cite{BC-CobI} to the oriented and graded setting.
Incorporating Haug's results, the proof of the following proposition is very similar to the original proof by Biran and Cornea in \cite{BC-Lef}.
We will explain the main idea in Section \ref{subsect:idea-of-proof-of-conedecomp}.
The notation regarding iterated cone decompositions can be found in Appendix~\ref{subsubsec:cones}.

\begin{prop}\label{Prop:conedecomp}
 Let $(V,\theta_V)$ be an object of $\fuk(\C\times M)$ with Lagrangian $L_j\subset M$ over the horizontal end on height $h_j\in\Z_{\geq 1}$ for $1\leq j\leq s$ and with $1=h_1<\ldots < h_s$.
Then the Yoneda-module $\M_{V,\theta_V}=\Y(V,\theta_V)$ admits a cone decomposition in $D\fuk(\C\times M)$ of the form:

\begin{align}
\M_{V,\theta_V} \cong (\Ic^{{h_s}}\circ \Rc_{h_s}(\M_{V,\theta_V})[-1]  &\to \Ic^{{h_{s-1}}}\circ \Rc_{{h_{s-1}}}(\M_{V,\theta_V})[-1]\to \ldots \label{prop: cone decomp in MxC} \\\nonumber
  \ldots \to \Ic^{{h_3}}\circ \Rc_{h_{3}}(\M_{V,\theta_V})[-1] &\to \Ic^{{h_2}}\circ \Rc_{h_2}(\M_{V,\theta_V})).
\end{align}
\end{prop}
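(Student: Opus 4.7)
The plan is to follow the proof strategy of Biran--Cornea in~\cite[Prop.~4.3.1]{BC-Lef}, adapted to the graded cohomological setting of~\cite[Sect.~4]{Haug-T^2}. The key geometric input is that the projection $\pi_\C : V \to \C$ organises intersections of $V$ with test objects by height, and this organisation yields a filtration that exhibits $\M_{V,\theta_V}$ as an iterated cone whose associated graded pieces are exactly the factors $\Ic^{h_j}\circ\Rc_{h_j}(\M_{V,\theta_V})$ (up to shifts).

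Concretely, I would test $\M_{V,\theta_V}$ against Yoneda modules of the form $\iota^{(\eta_{h_j},r)}(N)$ for varying $N\in\fuk(M)$ and $r\in\Z$. With the perturbation convention of Section~\ref{sec:incl-restr-functors} (horizontal ends of the second cobordism pushed slightly downward), the intersection points of $V$ with such test cobordisms concentrate in the cylindrical ends of $V$ and correspond bijectively to intersections of the ends $L_j$ with $N$. Ordering these generators by the height $h_j$ of the end in which they lie produces an $\R$-filtration on the Floer complex $CF(\iota^{(\eta_{h_j},r)}(N), V)$. A standard energy argument, together with the split form of $\widetilde{J}$ and $\widetilde{\om}$ in the cylindrical region, shows that holomorphic strips respect this filtration (they can only connect generators at the same height or, when their $\pi_\C$-projection travels between two adjacent heights, contribute to the connecting maps between filtration levels). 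The associated graded piece at height $h_j$ is then identified through~\eqref{eq:restriction-shift} and Remark~\ref{rem:restriction-inclusion-relation} with the Yoneda module of $\Ic^{h_j}\circ\Rc_{h_j}(\M_{V,\theta_V})$, up to the grading shift dictated by Remark~\ref{rem:curve-grading-increase}. Because this filtration is natural in the test data and is compatible with all higher $A_\infty$-operations (again by an energy/projection argument), it lifts to an iterated cone decomposition of $\M_{V,\theta_V}$ inside $H^0(\Y'(\fuk(\C\times M))^{\wedge})\simeq D\fuk(\C\times M)$.

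The hardest part will be the careful bookkeeping of gradings and signs. The shift by $[-1]$ on all factors except the last one reflects the fact that the grading of $\gamma_{h_j}$ increases by exactly one from its top to its bottom horizontal end (cf.~Remark~\ref{rem:curve-grading-increase}), combined with the normalisation $h_1=1$, which is precisely what causes the rightmost factor at height $h_2$ to carry no extra shift. One must also verify that the higher $A_\infty$-products preserve the height filtration, which follows because pseudoholomorphic polygons with boundary on $V$ and a product cobordism are constrained by their $\pi_\C$-projection. Since the full argument is already written down in~\cite{BC-Lef} (ungraded, over $\Z/2\Z$) and in~\cite{Haug-T^2} (graded cohomological, but for the analogous decomposition on the $M$-side), the remaining work is largely a matter of combining these two and checking compatibility with the orientation and sign conventions of Section~\ref{sec:incl-restr-functors}.
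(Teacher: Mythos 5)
Your proposal is correct and follows essentially the same route as the paper: a height filtration of $\Y(V,\theta_V)$ by $A_\infty$-submodules generated by intersections with the lowest horizontal ends (the projection/energy argument of Biran--Cornea's Prop.~4.3.1 showing the $A_\infty$-operations respect it), with graded pieces identified as $\Ic^{h_j}\circ\Rc_{h_j}(\M_{V,\theta_V})$ via the $\eta$-curve restrictions and Haug's grading conventions. The only step you gloss over that the paper makes explicit is the passage from the filtration to exact triangles, which it handles with a separate lemma showing that an inclusion of $A_\infty$-modules induces an exact triangle with the quotient module in $H^0(mod(\fuk(\C\times M)))$.
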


\begin{rem}
In the above proposition, we assume that $V$ has $h_1=1$. If this is not the case we artificially set $L_1=\emptyset$ with $h_1=1$.
Note that, even though this is a \emph{cohomological} version of the proposition, the arrows are not reversed with respect to the setting in \cite{BC-Lef}.
This is because of the way we have chosen the perturbation datum and since we are considering \emph{positively} ended cobordisms.
\end{rem}

This cone decomposition can also be written as

\begin{align*}
\M_{V,\theta_V} \cong (\Ic^{{h_s}}\circ \Rc_{h_s}(\M_{V,\theta_V})[-1]  &\to \Ic^{{h_{s-1}}}\circ \Rc_{{h_{s-1}}}(\M_{V,\theta_V})[-1]\to \ldots \\
  \ldots \to \Ic^{{h_3}}\circ \Rc_{h_{3}}(\M_{V,\theta_V})[-1] &\to \Ic^{{h_2}}\circ \Rc_{h_2}(\M_{V,\theta_V})[-1]\to 0) 
\end{align*}

and, by \eqref{eq:restriction-shift}, as

 \begin{align}\label{eq:cone-decomp-wt(L)}
 \M_{V,\theta_V} \cong (\wt{L}_s[-1]  \to \wt{L}_{s-1}[-1]\to \ldots\to \wt{L}_3[-1]\to \wt{L}_2[-1]\to 0),
 \end{align}

where $\widetilde{L}_j$ is isomorphic to $\Ic^{{h_j}}\circ \Rc_{h_j}(\M_{V,\theta_V})=\Ic^{{h_j}}\Y(L_j,\theta_{V,j})$, for $2\leq j\leq s$.

Moreover, if we apply the restriction $\Rc_1$ to both sides of this decomposition, we obtain the following iterated cone decomposition in $D\fuk(M)$ (cf.~\cite{BC-CobI})

 \begin{align}\label{eq: cone-decomp in M}
 \Y(L_1,\theta_{V,1}) \cong (\Y(L_s,\theta_{V,s})[-2]  \to
   \ldots\to \Y(L_2,\theta_{V,2})[-2]\to 0),
 \end{align}
since $\Rc_1\wt{L}_j \cong \Rc_1\circ \Ic^{{h_j}}\Y(L_j,\theta_{V,j}) \cong \Y(L_j,\theta_{V,j})[-1] = \Rc_{h_j}(\M_{V,\theta_V})[-1]$ for $2\leq j$.\par

\begin{rem}
Recall that the central charge on Yoneda-modules is given by \eqref{eq:def_central_charge}.
Decomposition \eqref{eq: cone-decomp in M} and Remark~\ref{rem:K-group-cone-sign-relation} allows us to further rewrite \eqref{eq:def_central_charge} as
$$
Z([V,\theta_V]_{K_0}) = Z^M\left( \sum_{j\geq 2}[\Rc_{h_j}(V,\theta_V)]_{K_0}\right) = -Z^M[\Rc_1(V,\theta_V)]_{K_0}.
$$
In particular, this means that $Z([V,\theta_V]_{K_0})=0$, whenever the cobordism $V$ does not have a horizontal end at height $1$ (i.e. $L_1=\emptyset$).
\end{rem}

\begin{rem}
Proposition \ref{Prop:conedecomp} provides us with a cone decomposition of the Yo\-ne\-da-module $(V,\theta_V)$ which, together with Remark \ref{rem:K-group-cone-sign-relation}, implies the following equality in the Grothendieck group:
$$
[V,\theta_V]_{K_0} = \sum_{j=2}^s -[\Ic^{{h_j}}\circ \Rc_{h_j}(V,\theta_V)[-1]]_{K_0} = \sum_{j=2}^s [\Ic^{{h_j}}\circ \Rc_{h_j}(V,\theta_V)]_{K_0}  = \sum_{j=2}^s [\widetilde{L}_j]_{K_0},
$$
where $s\in\Z$ is the number of horizontal ends of $V$ and $\widetilde{L}_j:=\Ic^{h_j}\circ \Rc_{h_j}(V,\theta_V)$.
This leads to:
\begin{align*}
Z\left( \sum_{j=2}^s  [\Ic^{{h_j}}\circ \Rc_{h_j}(V,\theta_V)]_{K_0}\right) &= \sum_{j=2}^s  Z[\Ic^{{h_j}}\circ \Rc_{h_j}(V,\theta_V)]_{K_0}\\
              &=  \sum_{j=2}^s  \sum_{k\geq 2}Z^M[\Rc_{k}\circ \Ic^{{h_j}}\circ \Rc_{h_j}(V,\theta_V)]_{K_0}\\
              &= \sum_{j=2}^s  Z^M[ \Rc_{h_j}(V,\theta_V)]_{K_0}\\
              &= Z[V,\theta_V]_{K_0}.
\end{align*}

  In the second line of the above equation the restriction functor $\Rc_{k}$ occurs and is applied to the Yoneda-module $\Ic^{{h_j}}\circ \Rc_{h_j}(V,\theta_V)=\Ic^{{h_j}}(L_j,\theta_{V,j})=\Y(\iota^{{h_j}}(L_j,\theta_{V,j}))$ of the cobordism $\iota^{{h_j}}(L_j,\theta_{V,j})$ with exactly $2$ horizontal ends.
The third equality follows from Remark \ref{rem:restriction-inclusion-relation}.
Moreover, the term in the sum running over the index $k$ is non-zero only if $k=h_j$.
\end{rem}

\subsection{Algebraic preliminaries and the main idea of the proof of Proposition~\ref{Prop:conedecomp}}
Following \cite{BC-Lef}, we will briefly recall the main idea of the proof along with some algebraic remarks on $A_\infty$-modules.

\subsubsection{$A_\infty$-submodules induce exact triangles}
This section closely follows the remarks by Biran and Cornea \cite{BC-Lef}.\par
Let $\A$ be an $A_\infty$-category, we will denote by $\QQ:=mod(\A)$ the $A_\infty$-category of $A_\infty$-modules over $\A$.
For the definition of $A_\infty$-modules and $A_\infty$-module homomorphisms see \cite[Sect.~(1j)]{seidel-book}.\par
Now, let $\NN$ and $\M$ be two $A_\infty$-modules over $\A$.
An $A_\infty$-module homomorphism $i:\NN\to \M$ is called an \emph{inclusion}, if $i^1:\NN(X)\to\M(X)$ is injective for every object $X$ of $\A$ and all the higher components $i^k$, $k\geq 2$, vanish.
If there exists an inclusion $i:\NN\to\M$, the module $\NN$ is said to be a \emph{submodule} of $\M$.
The structural maps of $\NN$ are given by restriction of the structural maps of $\M$.
Moreover, if $\NN$ is a submodule of $\M$, the quotient module $\M / \NN$ over $\A$ is defined by $\M / \NN (X) = \M(X) / \NN(X)$ together with structural maps induced by those of $\M$.
For an element $b\in\M(X)$, we will denote its equivalence class by $\overline{b}\in\M(X) / \NN(X)$.
One can form the cone $\CC := \mathrm{cone}(i)$ over $i$ which is the $A_\infty$-module (see~\cite[Sect.~(3s)]{seidel-book})
\begin{align*}
  &\CC (X) = \NN(X)[1]\oplus \M(X)\\
  & \mu_\CC^1(b_0,b_1) = (\mu_\NN^1(b_0), \mu_\M^1(b_1) + i^1(b_0))\\
  & \mu_\CC^d((b_0,b_1), a_{d-1},\ldots, a_1) = (\mu_\NN^d(b_0, a_{d-1},\ldots, a_1), \mu_\M^d(b_1, a_{d-1},\ldots, a_1)), \; \text{if}\; d\geq 2.  
\end{align*}

The cone comes along with canonical $A_\infty$-module homomorphisms (cf.~\cite[Sect.~(3f)]{seidel-book}) $\iota\in \hom^0_\QQ(\M,\CC)$ and $\pi\in \hom^1_\QQ(\CC, \NN)$ given by
$$
\iota^1(b_1)=(0,(-1)^{\abs{b_1}}b_1),\quad \pi^1(b_0,b_1)=(-1)^{\abs{b_0}-1}b_0
$$
and vanishing higher order terms.
Here $\abs{\cdot}$ denotes the degree in the graded vector space.
In addition, we have the following $A_\infty$-module homomorphisms
\begin{align}
  &\id_\M\in\hom_\QQ^0(\M,\M), &\text{given by}\quad& \id_\M^1(b)=(-1)^{\abs{b}}b, &\id_\M^d=0,\; d\geq 2,\label{eq:module-homs-id}\\
  &q\in\hom_\QQ^0(\M,\M/ \NN), &\text{given by}\quad& q^1(b)=(-1)^{\abs{b}}\overline{b}, &q^d=0,\; d\geq 2,\label{eq:module-homs-q}\\
  &\vphi\in\hom_\QQ^0(\CC,\M / \NN), &\text{given by}\quad& \vphi^1(b_0,b_1)=(-1)^{\abs{(b_0,b_1)}}\overline{b_1}, &\vphi^d=0,\; d\geq 2.\label{eq:module-homs-vphi}
\end{align}
Notice that $\abs{(b_0,b_1)}=\abs{b_1}=\abs{b_0}-1$. 
The signs occurring in these definitions are needed in order to ensure that these morphisms are actual $A_\infty$-module homomorphisms and not just pre-module homomorphisms (i.e. to ensure that $\mu_\QQ^1(\id_\M), \mu_\QQ^1(q)$ and $\mu_\QQ^1(\vphi)$ vanish).\par
Recall that any $A_\infty$-module $\M\in \QQ$ gives rise to a $H(\A)$-module $H(\M)$ by taking, for each $X\in\A$, the cohomology of $\M(X)$ with respect to the differential $\partial_\M(b)=(-1)^{\abs{b}}\mu_\M^1(b)$, and module structure induced by $b\cdot a = (-1)^{\abs{a}}\mu_\M^2(b,a)$.
Furthermore, any $A_\infty$-module homomorphism $t\in\hom(\NN,\M)$ induces a module homomorphism
$$
H(t):H(\NN)\to H(\M);\quad [b]\mapsto [(-1)^{\abs{b}}t^1(b)]
$$
on the cohomological level.
Note that the signs introduced in \eqref{eq:module-homs-id}-\eqref{eq:module-homs-vphi} cancel on the cohomological level.
In particular $H(\id_\M):H(\M)\to H(\M): [b]\mapsto [(-1)^{\abs{b}}\id_\M^1(b)]=[b]$ is the usual identity morphism of modules.
Now, we have the following commutative diagram in $\QQ$:
\begin{align}\label{diag:quotient-cone-chain-diagram}
   \xymatrix{
   \NN \ar[r]^{i} &\M \ar[r]^q &\M /\NN\\
   \NN \ar[r]^{i}\ar[u]_{\id_\NN} &\M \ar[r]^\iota\ar[u]_{\id_\M} &\CC \ar[u]_\vphi
}
\end{align}

With this preparation we will now prove the following lemma:

\begin{lem}\label{lem:submodule-exact_triangle}
  An inclusion $i:\NN\to \M$ of $A_\infty$-modules induces an exact triangle 
\begin{align*}
   \xymatrix{
  \NN \ar[rr]^{[i]} & & \M \ar[dl]^{[q]} \\
& \M / \NN \ar[ul]^{[j]}&
}
\end{align*}
in $H^0(\QQ)$, where $q\in\hom_\QQ^0(\M,\M/ \NN)$ is given as above.
\end{lem}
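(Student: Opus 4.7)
The plan is to transport the canonical exact triangle of the cone of $i$ across an isomorphism between $\CC = \mathrm{cone}(i)$ and $\M/\NN$ in $H^0(\QQ)$. By the general construction of the triangulated category $H^0(\QQ)$ from the $A_\infty$-category $\QQ$ (see~\cite[(3f)]{seidel-book}), the canonical morphisms $\iota$ and $\pi$ that come with the cone yield the standard exact triangle
\begin{equation*}
\NN \xrightarrow{[i]} \M \xrightarrow{[\iota]} \CC \xrightarrow{[\pi]} \NN[1].
\end{equation*}
Once $[\vphi]:\CC\to\M/\NN$ is known to be an isomorphism in $H^0(\QQ)$, I will set $[j] := [\pi]\circ [\vphi]^{-1}$, and the commutativity of the lower part of diagram~\eqref{diag:quotient-cone-chain-diagram} on the $H^0$-level will give $[q] = [\vphi]\circ[\iota]$, so that the desired triangle is precisely the image of the standard one under the isomorphism $[\vphi]$.

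The heart of the proof is therefore to show that $[\vphi]$ is an isomorphism in $H^0(\QQ)$. This is equivalent to showing that for every object $X\in\A$ the chain map $\vphi^1(X):\CC(X)=\NN(X)[1]\oplus \M(X)\to \M(X)/\NN(X)$ is a quasi-isomorphism. This is the classical homological fact that, given a short exact sequence of chain complexes $0\to A\to B \to B/A\to 0$, the natural chain map from the mapping cone of the inclusion down to the quotient is a quasi-isomorphism. I would establish this either by comparing long exact sequences via the five lemma (the mapping cone $\CC(X)$ fits into $0\to \M(X)\to\CC(X)\to\NN(X)[1]\to 0$, producing a long exact sequence of cohomology matching the one associated to $0\to \NN(X)\to\M(X)\to\M(X)/\NN(X)\to 0$), or by directly exhibiting an acyclic subcomplex whose quotient in $\CC(X)$ is isomorphic on the nose to $\M(X)/\NN(X)$.

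The remaining verification is the commutativity of~\eqref{diag:quotient-cone-chain-diagram} on the $H^0$-level, which is a direct sign-chase using \eqref{eq:module-homs-id}--\eqref{eq:module-homs-vphi} and the formula for $\iota$: the induced map on cohomology sends $[b]\in H(\M(X))$ through $\iota$ to $[(0,b)]\in H(\CC(X))$ and further through $\vphi$ to $[\overline{b}]$, which coincides with $H(q)[b]$. The main obstacle is essentially this bookkeeping --- keeping careful track of the signs in the definitions of $\iota$, $\pi$, $q$, $\vphi$ so that the listed morphisms are bona fide $A_\infty$-module homomorphisms and the diagram commutes on the nose on the cohomological level; once this is done the rest of the argument is formal triangulated-category manipulation.
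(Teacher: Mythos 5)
Your proposal is correct and follows essentially the same route as the paper: both reduce the statement to showing that $\vphi:\CC\to\M/\NN$ is a quasi-isomorphism by comparing the two long exact sequences (the one from $0\to\NN(X)\to\M(X)\to\M/\NN(X)\to 0$ and the one from the cone) via the five lemma, and then transport the canonical triangle of the cone across $[\vphi]$, defining $[j]=[\pi]\circ[\vphi]^{-1}$. The only detail worth flagging is that the square involving the connecting homomorphism $\delta$ and $H^k(\pi)$ also needs to be checked to commute before the five lemma applies, and that passing from ``quasi-isomorphism of $A_\infty$-modules'' to ``isomorphism in $H^0(\QQ)$'' is a non-trivial fact (\cite[Lemma~1.16]{seidel-book}) that should be cited rather than treated as automatic.
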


\begin{proof}

By \cite[Lemma~3.35]{seidel-book}
\begin{align}
  \xymatrix{
  \NN \ar[rr]^{[i]} & & \M \ar[dl]^{[\iota]} \\
& \CC \ar[ul]^{[\pi]}&
}
\end{align}
is an exact triangle in $H^0(\QQ)$.
Hence, since the higher order terms of the occuring $A_\infty$-module homomorphisms vanish, we get for each object $X$ in $\A$ the following short exact sequences of cochain complexes
\begin{align*}
 & 0\To \NN(X) \overset{i}{\To} \M(X) \overset{q}{\To} \M /\NN (X) \To 0\\
 & 0\To\M(X) \overset{\iota}{\To} \CC(X) \overset{\pi}{\To} \NN[1](X) \To 0
\end{align*}
which induce long exact sequences on cohomology

\begin{align}\label{diag:quotient-cone-cohomology-diagram}
   \resizebox{.9\hsize}{!}{\xymatrix{
  \ldots \ar[r] & H^k(\NN(X)) \ar[r]^{H^k(i)} &H^k(\M(X)) \ar[r]^{H^k(q)} &H^k(\M/\NN(X)) \ar[r]^{\delta} & H^{k+1}(\NN(X)) \ar[r]&\ldots\\
  \ldots \ar[r] & H^k(\NN(X)) \ar[r]^{H^k(i)}\ar[u]^{H^k(\id_\NN)}_\cong &H^k(\M(X)) \ar[r]^{H^k(\iota)}\ar[u]^{H^k(\id_\M)}_\cong &  H^k(\CC(X)) \ar[r]^{H^k(\pi)}\ar[u]^{H^k(\vphi)} & H^{k+1}(\NN(X)) \ar[r]\ar[u]^{H^{k+1}(\id_\NN)}_\cong&\ldots 
}}
\end{align}
Here $\delta$ is (up to sign) the connecting homomorphism given by
$$
\delta [\overline{m}] = \left[ -(i^1|_{\mathrm{im}(i^1)})^{-1}\circ \partial_\M(-1)^{\abs{m}} m \right],
$$
where $(-1)^{\abs{m}}m$ is any lift of $\overline{m}$ with respect to  $q^1$.
Note that if $(n,m)\in \CC(X)$ is a cocycle, i.e. if $\partial_\CC(n,m)=0$, we have that $\mu_M^1(m)=-i^1(n)$ and therefore
\begin{align*}
  \delta\circ H^k(\vphi)[(n,m)] &= \delta[\overline{m}]\\
                                & = \left[ -(i^1|_{\mathrm{im}(i^1)})^{-1}\circ \partial_\M(-1)^{\abs{m}} m \right] \\
                                &= \left[ -(i^1|_{\mathrm{im}(i^1)})^{-1}\circ \mu_\M^1 m \right] \\
                                &= \left[ -(i^1|_{\mathrm{im}(i^1)})^{-1}\circ i^1(-n) \right]\\
                                &=[n]
\end{align*}
and 
\begin{align*}
  H^{k+1}(\id_\NN)\circ H^k(\pi)[(n,m)] = H^{k+1}(\id_\NN) [n] = [n].
\end{align*}
This together with the commutativity of \eqref{diag:quotient-cone-chain-diagram} implies that \eqref{diag:quotient-cone-cohomology-diagram} is a commutative diagram.
By the $5$-Lemma we conclude that $\vphi$ is a quasi-isomorphism.
Since $\vphi$ is a quasi-isomorphism of $A_\infty$-modules it induces an isomorphism in the cohomological category $H^0(\QQ)$ (this is non-trivial, see \cite[Lemma~1.16]{seidel-book}).
Therefore, if we define $[j]\in\hom_{H^0(\QQ)}(\M /\NN,\NN[1])$ by 
$$[j] = [\id_\NN]\circ [\pi]\circ [\vphi]^{-1},$$
 we obtain the following commutative diagram in $H^0(\QQ)$
\begin{align}
  \xymatrix{
  \ldots \ar[r] &\NN \ar[r]^{[i]} &\M \ar[r]^{[q]} &\M/\NN \ar[r]^{[j]} & \NN[1] \ar[r]&\ldots\\
  \ldots \ar[r] & \NN \ar[r]^{[i]}\ar[u]_{[\id_\NN]}^\cong &\M \ar[r]^{[\iota]}\ar[u]_{[\id_\M]}^\cong &  \CC \ar[r]^{[\pi]}\ar[u]_{[\vphi]}^\cong & \NN[1] \ar[r]\ar[u]_{[\id_\NN]}^\cong&\ldots 
}
\end{align}
Hence we get the desired exact triangle in $H^0(\QQ)$.
\end{proof}

\subsubsection{The main idea of the proof of Proposition~\ref{Prop:conedecomp}}\label{subsect:idea-of-proof-of-conedecomp}

Here we will outline the main idea of the proof of Proposition~\ref{Prop:conedecomp}.
For more details and the full argument we refer to \cite[Prop.~4.3.1]{BC-Lef}.\\ \par

Let $(V,\theta_V):(L_1,\ldots,L_s)\leadsto\emptyset$ be an object of $\fuk(\C\times M)$.
For $1\leq j\leq s$, let $\M_j$ be the submodule of the Yoneda-module $\Y(V,\theta_V)$ in $D\fuk(\C\times M)$ such that for any object $X$ of $\fuk(\C\times M)$, $\M_j(X)$ is generated only by the intersections of $X$ with the lowest $j$ horizontal ends of $V$, i.e. with the horizontal ends of $V$ on heights $h_1=1,h_2\ldots,h_j$.
The structural maps of $\M_j$ are induced by those of $\Y(V,\theta_V)$.
The fact that each $\M_j$ is an $A_\infty$-module over $\fuk(\C\times M)$ is a consequence of how the Fukaya category of cobordisms is set up and moreover we have a sequence of successive $A_\infty$-submodules (see~Step 3 in the proof of \cite[Prop.~4.3.1]{BC-Lef})
\begin{align*}
  0=\M_1\subset \ldots\subset \M_{s-1}\subset \M_s= \Y(V,\theta_V).
\end{align*}
\noindent In addition, for $2\leq j\leq s$, the quotient $\widetilde{L}_j:=\M_j /\M_{j-1}$ is isomorphic to
$$
\Ic^{{h_j}} \Rc_{h_j}(\M_{V,\theta_V})=\Ic^{{h_j}}\Y(L_j,\theta_{V,j})
$$
in $H^0(\fuk(\C\times M))$ (cf.~\cite{BC-Lef} and \cite[Section~4]{Haug-T^2}).
By Lemma \ref{lem:submodule-exact_triangle}, we obtain the following exact triangles in $H^0(mod(\fuk(\C\times M)))$
$$
\widetilde{L}_{j}[-1]\To\M_{j-1}\To\M_{j}\To\widetilde{L}_{j}
$$
for $2\leq j\leq s$.
Rotating these triangles yields
\begin{equation}\label{diag:cone-decomp-1}
\xymatrix@R16pt@C8pt{
        0=\M_1 \ar[rr]  & & \M_2 \ar[dl]\ar[rr]  & & \M_3 \ar[dl]\ar[r] & \cdots  \ar[r] & \M_{s-1} \ar[rr]  & & \M_s\cong V \ar[dl]\\
        & \widetilde{L}_2\ar@{-->}^{[1]}[ul] & & \widetilde{L}_{3}\ar@{-->}^{[1]}[ul] &&& & \widetilde{L}_s\ar@{-->}^{[1]}[ul]
      }
\end{equation}
Note that \eqref{diag:cone-decomp-1} is an equivalent way of writing the iterated cone decomposition \eqref{prop: cone decomp in MxC} from Proposition~\ref{Prop:conedecomp}.
Moreover, notice that for $2\leq j\leq s$, $\wt{L}_j$ is an object of $D\fuk(\C\times M)$, so it follows inductively that each of the modules $\M_j$ is an object of the derived Fukaya category $D\fuk(\C\times M)$ as well.


\section{The Lagrangian cobordism group and $K_0(D\fuk(M))$}\label{chap:theta-map}
\subsection[Stability conditions and the rel. between $\Om_{Lag}(M)$ and $K_0(D\fuk(M))$]{Stability conditions and the relation between $\Om_{Lag}(M)$ and $K_0(D\fuk(M))$}\label{sec:theta-map}

A central question related to Lagrangian cobordisms and the derived Fukaya category is the following:
\emph{To which extent can algebraic relations in $K_0(D\fuk(M))$ be understood geometrically via Lagrangian cobordisms?}
In this section we will derive formal conditions under which the Lagrangian cobordism group $\Om_{Lag}(M)$ is isomorphic to the Grothendieck group $K_0(D\fuk(M))$.
This is an attempt to demonstrate a link between stability conditions and the Lagrangian cobordism group.
In practice it might not be feasible to check these conditions in many examples.
However, this section provides a general context under which Haug's result on $T^2$ \cite[Thm.~1.1]{Haug-T^2} can be understood and possibly extended to other examples. \\ \par

As before, let $(M^{2n},J,g,\Om)$ be a Calabi-Yau manifold.
Recall from Appendix~\ref{subsubsec:cones} that the \emph{Grothendieck group} (or $K$-group) $K_0(D\fuk(M))$ of the derived Fukaya category of $M$ is defined as the free abelian group $\langle D\fuk(M)\rangle$ generated by the objects of $D\fuk(M)$ modulo the subgroup $R\subset \langle D\fuk(M)\rangle $ generated by expressions of the type $X-Y+Z$ for every exact triangle in $D\fuk(M)$ of the form $X\to Y\to Z\to X[1]$:
\begin{align*}
  K_0(D\fuk(M)):=\langle D\fuk(M)\rangle / R.
\end{align*}

We denote by $\LL$ be the set of all Lagrangian branes $(L,\theta_L,E_L)$ in $M$ equipped with local systems and by $\langle \LL\rangle$ the free abelian group generated by $\LL$.
Let $R_\LL\subset \langle\LL \rangle$ be the subgroup generated by expressions of the following two types

\begin{enumerate}
\item
  \begin{align*}
    \sum_{j=1}^s (L_j,\theta_j,E_j) \in \langle\LL \rangle
  \end{align*}
  whenever there exists a graded Lagrangian cobordism with local system
$$
(V,\theta_V,E_V):((L_1,\theta_1,E_1),\ldots , (L_s,\theta_s,E_s))\leadsto \emptyset
$$
such that the grading $\theta_V$ and the local system $E_V$ restricts to the respective ones on the horizontal ends.

\item
 $ (L,\theta_L,E') - (L,\theta_L,E) + (L,\theta_L,E'')\in  \langle\LL \rangle$ whenever there exists a short exact sequence of local systems $0\to E'\to E\to E''\to 0$ on $(L,\theta_L)$.

\end{enumerate}

 \begin{df}\label{df:Lag.Cob.group}
The \emph{Lagrangian cobordism group of $M$}, denoted by $\Om_{Lag}(M)$, is defined as the quotient of the free abelian group $\langle\LL \rangle$ modulo the subgroup $R_\LL$, that is:
\begin{align*}
  \Om_{Lag}(M):= \langle\LL \rangle / R_\LL.
\end{align*}
\end{df}

Note that if $(V,\theta_V,E_V): ((L_1,\theta_{V,1},E_{V,1}),\ldots , (L_s,\theta_{V,s},E_{V,s}))\leadsto \emptyset$ is a graded Lagrangian cobordism equipped with local system $E_V$ and such that $L_1$ is on height $h_1=1$, then, by \eqref{eq: cone-decomp in M} and Remark~\ref{rem:K-group-cone-sign-relation}, the following relation is satisfied in the Grothendieck group $K_0(D\fuk(M))$:
\begin{align*}
  [(L_1,\theta_{V,1},E_{V,1})]_{K_0}=-\sum_{j=2}^s [(L_j,\theta_{V,j},E_{V,j})]_{K_0} \quad \iff \quad \sum_{j=1}^s [(L_j,\theta_{V,j},E_{V,j})]_{K_0} = 0.
\end{align*}

\begin{rem}\label{rem:shift-cobordism}
  Given $(L,\theta_{L},E_{L})\in\LL$ and $j\in \Z_{>1}$ we can form the cobordism $V:= \iota^j(L,\theta_{L}-1,E_{L}):((L,\theta_{L},E_{L}), (L,\theta_{L}-1,E_{L})) \leadsto \emptyset $ from which we obtain the following relations:
  \begin{align*}
    &[(L,\theta_{L},E_{L})]_{K_0}+[(L,\theta_{L},E_{L})[1]]_{K_0}=0\; \text{in } K_0(D\fuk(M)), \;\text{and}\\
    &[(L,\theta_{L},E_{L})]_{Lag}+[(L,\theta_{L},E_{L})[1]]_{Lag}=0\; \text{in } \Om_{Lag}(M).
  \end{align*}
\end{rem}

Moreover, if $0\to E'\to E\to E''\to 0$ is a short exact sequence of local systems on $(L,\theta_L)$, then, by \cite[Prop.~10.1]{Haug-T^2}, there exists an exact triangle $(L,\theta_L,E') \to (L,\theta_L,E)  \to (L,\theta_L,E'')\to (L,\theta_L,E')[1]$ in $D\fuk(M)$.

This implies, as Biran and Cornea noticed in \cite{BC-CobII}, that there is a well defined surjective homomorphism from the Lagrangian cobordism group to the Grothendieck group of the derived Fukaya category.

\begin{cor}[{\cite[Cor.~1.2.1]{BC-CobII}}]\label{cor:theta-surjective}
The map $\langle\LL \rangle \to K_0(D\fuk(M))$ given by $(L,\theta_L,E_L)\mapsto [(L,\theta_L,E_L)]_{K_0}$ induces a well-defined surjective group homomorphism
\begin{align*}
  \Theta : \Om_{Lag}(M)\To K_0(D\fuk(M)).
\end{align*}
\end{cor}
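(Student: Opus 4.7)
The plan is to prove the two assertions separately: first, that the map $(L, \theta_L, E_L) \mapsto [(L, \theta_L, E_L)]_{K_0}$ annihilates the subgroup $R_\LL$ and hence descends to a well-defined homomorphism $\Theta : \Om_{Lag}(M) \to K_0(D\fuk(M))$; and second, that $\Theta$ is surjective.

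For well-definedness, I would verify separately that each of the two families of generators of $R_\LL$ is sent to $0$. For a type~(1) generator, take a graded Lagrangian cobordism with local system $(V, \theta_V, E_V) : ((L_1, \theta_{V,1}, E_{V,1}), \ldots, (L_s, \theta_{V,s}, E_{V,s})) \leadsto \emptyset$, normalising so that the lowest end sits at height $h_1 = 1$ (if necessary we insert $L_1 = \emptyset$ as discussed after Proposition~\ref{Prop:conedecomp}). Applying Proposition~\ref{Prop:conedecomp} and restricting to the first end via $\Rc_1$ yields the iterated cone decomposition~\eqref{eq: cone-decomp in M} in $D\fuk(M)$. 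Feeding this into Remark~\ref{rem:K-group-cone-sign-relation}, together with the identity $[X[k]]_{K_0} = (-1)^k [X]_{K_0}$, produces the equality $[L_1]_{K_0} = -\sum_{j \geq 2} [L_j]_{K_0}$ in $K_0(D\fuk(M))$, which is precisely the vanishing $\sum_{j=1}^s [(L_j, \theta_{V,j}, E_{V,j})]_{K_0} = 0$ as already recorded in the excerpt. For a type~(2) generator coming from a short exact sequence $0 \to E' \to E \to E'' \to 0$ of local systems on $(L, \theta_L)$, Haug's Proposition~10.1 in \cite{Haug-T^2}, cited in the paragraph preceding the statement, produces an exact triangle $(L, \theta_L, E') \to (L, \theta_L, E) \to (L, \theta_L, E'') \to (L, \theta_L, E')[1]$ in $D\fuk(M)$, and by definition of $K_0$ this forces $[(L, \theta_L, E')]_{K_0} - [(L, \theta_L, E)]_{K_0} + [(L, \theta_L, E'')]_{K_0} = 0$.

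For surjectivity, recall from Section~\ref{sec:Dfuk} that the derived Fukaya category is generated, as a triangulated category, by the geometric objects, i.e.\ every object of $D\fuk(M)$ is isomorphic to an iterated cone over shifts of Yoneda-images of Lagrangian branes. Applying the cone relation $[Y]_{K_0} = [X]_{K_0} + [Z]_{K_0}$ coming from every exact triangle $X \to Y \to Z \to X[1]$ inductively, together with $[X[k]]_{K_0} = (-1)^k [X]_{K_0}$, shows that $K_0(D\fuk(M))$ is generated as an abelian group by the classes $[(L, \theta_L, E_L)]_{K_0}$ of Lagrangian branes; each such class manifestly lies in the image of $\Theta$.

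The only real subtlety is matching the sign conventions between the iterated cone decomposition~\eqref{eq: cone-decomp in M}, where the ends appear with shifts $[-2]$, and Remark~\ref{rem:K-group-cone-sign-relation} on $K_0$-classes of iterated cones; this is purely formal, since $[E[-2]]_{K_0} = [E]_{K_0}$ cancels the extra shift, but it is worth checking explicitly to confirm the overall sign. Once this is settled, the well-definedness step is essentially an unpacking of Proposition~\ref{Prop:conedecomp} together with Haug's local-system triangle, and surjectivity follows from generation of $D\fuk(M)$ by geometric objects.
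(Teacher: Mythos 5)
Your proposal is correct and follows essentially the same route as the paper: the paper establishes well-definedness exactly by combining the relation $\sum_{j=1}^s[(L_j,\theta_{V,j},E_{V,j})]_{K_0}=0$ obtained from \eqref{eq: cone-decomp in M} and Remark~\ref{rem:K-group-cone-sign-relation} with the exact triangle from \cite[Prop.~10.1]{Haug-T^2} for short exact sequences of local systems, and surjectivity comes from the generation of $D\fuk(M)$ by geometric objects. Your sign check via $[X[k]]_{K_0}=(-1)^k[X]_{K_0}$ is consistent with the paper's conventions.
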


Note that we identify $(L,\theta_L,E_L)$ with its image under the Yoneda-embedding when viewing it as an object of $D\fuk(M)$.
For the rest of this section we will sometimes suppress the grading and the local system from the notation and simply write $L$ for the Lagrangian brane $(L,\theta_L,E_L)$.

We now want to find certain assumptions under which we can construct a well-defined inverse to the group homomorphism $\Theta : \Om_{Lag}(M)\to K_0(D\fuk(M))$.
Recall that an object of $D\fuk(M)$ is called \emph{geometric} if it is isomorphic to a shift of an object in the image of the Yoneda-embedding.

\begin{assumption*}
\begin{enumerate}[label=\textbf{(S\arabic*)}]
 \item \label{assumption:S1}Suppose that there exists a locally-finite stability condition \newline$(Z^M,\P^M)$ on $D\fuk(M)$ such that \emph{all stable objects} are \emph{geometric}.
\end{enumerate}
\end{assumption*}

With the above assumption in place, we can obtain a (finite) \emph{Jordan-H\"older} filtration (see Remark~\ref{df:stable-Jordan-Hoelder}) of every object of $D\fuk(M)$ which is unique up to permutation of stable factors of the same phase.
Therefore, denoting the collection of all \emph{stable} objects by $\SSS$, the inclusion $\langle \SSS\rangle \to \langle D\fuk(M)\rangle$ of free abelian groups descends to a surjective homomorphism $\langle \SSS\rangle \to K_0(D\fuk(M))$.
Put differently, the set $\SSS\subset Ob(D\fuk(M))$ generates $K_0(D\fuk(M))$.
Denoting by $R_\SSS\subset \langle \SSS\rangle$ the kernel of this surjective group homomorphism, we get an isomorphism of groups
\begin{align}\label{eq:stable-relation-K0}
  \varphi:\langle \SSS\rangle / R_\SSS \overset{\cong}{\To} K_0(D\fuk(M)).
\end{align}
The inverse $\varphi^{-1}$ is induced by the surjective homomorphism $\langle D\fuk(M)\rangle \to \langle \SSS\rangle$, that sends a generator $X\in Ob(D\fuk(M))$ to the sum $\sum_{i=1}^r S^X_i$ of its Jordan-H\"older factors $\{S^X_i\}_{i=1}^r$.
Composing $\Theta$ with the inverse of the isomorphism $\varphi$ yields a surjective group homomorphism $\widehat{\Theta}:\Om_{Lag}(M)\To \langle\SSS\rangle / R_\SSS$
\begin{align*}
 \xymatrix{
  \Om_{Lag}(M) \ar[rr]^{\widehat{\Theta}} \ar[rd]_{\Theta}& & \langle\SSS\rangle / R_\SSS\\
  &\langle D\fuk(M)\rangle / R \ar[ru]^\cong_{\varphi^{-1}}&
}\end{align*}

We will need the following two additional assumptions.

\begin{assumptions*}
\begin{enumerate}[label=\textbf{(S\arabic*)}]\setcounter{enumi}{1}
\item\label{assumption:S2}
  Every element in $R_\SSS\subset \langle\SSS \rangle$ induces a relation in $\Om_{Lag}(M)$.
  That is, if $\sum_{i=1}^r S_i\in R_\SSS$, then $\sum_{i=1}^r S_i \in R_\LL$ as well, where $S_i\in \LL$ are stable objects for $1\leq i\leq r$.
  In other words, and phrased somewhat imprecisely, any relation in $R_\SSS$ comes from a collection of Lagrangian cobordisms.
\item\label{assumption:S3} For every element $L\in\LL$ we have the relation
  \begin{align*}
    [L]_{Lag}=\sum_{j=1}^s[S^L_j]_{Lag}
  \end{align*}
  in $\Om_{Lag}(M)$, where $S^L_1,\ldots,S^L_s$ are the \emph{stable factors} occurring in the Jordan-H\"older filtration of $L\in Ob(D\fuk(M))$.
\end{enumerate}
\end{assumptions*}

\begin{rem}
  Assumptions \ref{assumption:S2} and \ref{assumption:S3} look rather technical.
  However, their advantage is that one only has to check something on the class of \emph{stable} objects.
  The stability condition of \ref{assumption:S1} will take care of the rest as we will see below.
\end{rem}

\begin{prop}
  Under the assumptions \ref{assumption:S1} and \ref{assumption:S2}, the map $\langle\SSS\rangle \to \langle\LL\rangle$ given by $S\mapsto S$
  induces a well-defined group homomorphism
$$
\widehat{\Psi}: \langle\SSS\rangle / R_\SSS \to  \Om_{Lag}(M),
$$
which is a right-inverse of $\widehat{\Theta}$, i.e. $\widehat{\Theta}\circ \widehat{\Psi} = \id_{\langle\SSS\rangle / R_\SSS}$.\par
If in addition we assume \ref{assumption:S3}, then $\widehat{\Psi}$ is also a left-inverse of $\widehat{\Theta}$ and hence
$$
\widehat{\Theta} : \Om_{Lag}(M) \To \langle\SSS\rangle / R_\SSS
$$
is an isomorphism of groups with inverse $\widehat{\Psi}$.
\end{prop}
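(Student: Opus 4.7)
The plan is to verify each claim of the proposition by unwinding the definitions and invoking each of the assumptions \ref{assumption:S1}--\ref{assumption:S3} exactly where it is needed. The assumptions have been tailored to do essentially all of the work, so no serious technical obstacle arises; the task is to assemble the pieces carefully on the level of free abelian groups.

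First I would establish that $\widehat{\Psi}$ is well-defined. The map $\langle\SSS\rangle \to \Om_{Lag}(M)$ sending a generator $S\in\SSS$ to its class $[S]_{Lag}\in\Om_{Lag}(M)$ (using that every stable object is geometric by \ref{assumption:S1}, hence represented by an honest Lagrangian brane) is a homomorphism of free abelian groups; to descend it to the quotient $\langle\SSS\rangle/R_\SSS$ I must show it kills $R_\SSS$. But this is precisely assumption \ref{assumption:S2}: any expression $\sum_{i=1}^r S_i \in R_\SSS$ lies in $R_\LL$, hence vanishes in $\Om_{Lag}(M)$.

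Next, to show that $\widehat{\Psi}$ is a right-inverse of $\widehat{\Theta}$, I would take a generator $[S]\in \langle\SSS\rangle/R_\SSS$ represented by a stable object $S$. Then $\widehat{\Psi}([S]) = [S]_{Lag}$, and applying $\widehat{\Theta} = \varphi^{-1}\circ \Theta$ yields $\varphi^{-1}([S]_{K_0})$. Since $S$ is stable, its Jordan--H\"older filtration consists of $S$ itself, so $\varphi^{-1}([S]_{K_0})=[S]\in\langle\SSS\rangle/R_\SSS$ by the very definition of $\varphi^{-1}$ recalled in the excerpt. Hence $\widehat{\Theta}\circ\widehat{\Psi}$ is the identity on generators, and by linearity on all of $\langle\SSS\rangle/R_\SSS$.

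Finally, under \ref{assumption:S3}, I would verify the left-inverse property $\widehat{\Psi}\circ\widehat{\Theta}=\id_{\Om_{Lag}(M)}$. Take a generator $[L]_{Lag}\in\Om_{Lag}(M)$, where $L=(L,\theta_L,E_L)\in\LL$. Let $S_1^L,\ldots,S_s^L$ denote the stable factors of the (finite, by local finiteness of the stability condition) Jordan--H\"older filtration of $L$, viewed as an object of $D\fuk(M)$ via the Yoneda-embedding. By the definition of $\varphi^{-1}$ we have $\widehat{\Theta}([L]_{Lag}) = \sum_{j=1}^s [S_j^L]\in\langle\SSS\rangle/R_\SSS$, and consequently
\[
\widehat{\Psi}\circ\widehat{\Theta}([L]_{Lag}) \;=\; \sum_{j=1}^s [S_j^L]_{Lag},
\]
which by \ref{assumption:S3} equals $[L]_{Lag}$. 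Linearity then gives the identity on all of $\Om_{Lag}(M)$, and combined with the previous step we conclude that $\widehat{\Theta}$ is an isomorphism with inverse $\widehat{\Psi}$.

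If any step is the main conceptual obstacle, it is checking that the Jordan--H\"older filtration genuinely provides a well-defined element in $\langle\SSS\rangle/R_\SSS$ (non-uniqueness of the JH filtration would be fatal here). This, however, is handled by Remark~\ref{rem:JH-filtration}: local finiteness of $(Z^M,\P^M)$ forces the stable factors to be unique up to permutation, so the sum $\sum_j S_j^L$ is a well-defined element of $\langle\SSS\rangle$, and hence of $\langle\SSS\rangle/R_\SSS$. Everything else is straightforward bookkeeping.
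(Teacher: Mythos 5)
Your proposal is correct and is essentially the paper's argument spelled out in detail: the paper's proof simply states that well-definedness and the right-inverse property follow from the definitions together with \ref{assumption:S1} and \ref{assumption:S2}, and that \ref{assumption:S3} gives surjectivity of $\widehat{\Psi}$ and hence the left-inverse property. The only (immaterial) difference is in the last step, where you verify $\widehat{\Psi}\circ\widehat{\Theta}=\id$ directly on generators rather than deducing it from surjectivity of $\widehat{\Psi}$ combined with $\widehat{\Theta}\circ\widehat{\Psi}=\id$; both are immediate from \ref{assumption:S3}.
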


\begin{proof}
  Well-definedness and the fact that $\widehat{\Psi}$ is a right-inverse of $\widehat{\Theta}$ follows directly from the definition of the maps together with assumptions \ref{assumption:S1} and \ref{assumption:S2}.
  Assumption \ref{assumption:S3} implies that $\widehat{\Psi}$ is surjective and hence also a left-inverse of $\widehat{\Theta}$.
\end{proof}

Since $\varphi:\langle \SSS\rangle / R_\SSS \overset{\cong}{\To} K_0(D\fuk(M))$ is a group isomorphism, we get the following corollary.

\begin{cor}\label{cor:theta-iso}
  Assuming \ref{assumption:S1} and \ref{assumption:S2}, the homomorphism
  \begin{align*}
    \Psi:=\widehat{\Psi}\circ\varphi^{-1} : K_0(D\fuk(M))\to \Om_{Lag}(M)
  \end{align*}
  is a right-inverse of $\Theta$.\par
  If we assume \ref{assumption:S3} as well, then $\Psi$ is also a left-inverse of $\Theta$ and therefore 
  \begin{align*}
    \Theta : \Om_{Lag}(M) \To K_0(D\fuk(M))
  \end{align*}
  is a group isomorphism.
\end{cor}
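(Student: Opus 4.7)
The plan is to deduce this corollary directly from the preceding Proposition by a formal diagram chase, using that $\varphi$ is an isomorphism. Recall that $\widehat{\Theta}:\Om_{Lag}(M)\to \langle\SSS\rangle/R_\SSS$ was defined precisely as $\widehat{\Theta}=\varphi^{-1}\circ \Theta$, so equivalently $\Theta=\varphi\circ \widehat{\Theta}$. The candidate inverse is $\Psi:=\widehat{\Psi}\circ\varphi^{-1}$, which makes sense as a group homomorphism since both $\widehat{\Psi}$ (well-defined under \ref{assumption:S1},\ref{assumption:S2} by the Proposition) and $\varphi^{-1}$ are group homomorphisms.

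Assuming only \ref{assumption:S1} and \ref{assumption:S2}, the Proposition gives $\widehat{\Theta}\circ\widehat{\Psi}=\mathrm{id}_{\langle\SSS\rangle/R_\SSS}$. Composing then yields
\[
\Theta\circ\Psi \;=\; (\varphi\circ\widehat{\Theta})\circ(\widehat{\Psi}\circ\varphi^{-1}) \;=\; \varphi\circ\mathrm{id}\circ\varphi^{-1} \;=\; \mathrm{id}_{K_0(D\fuk(M))},
\]
so $\Psi$ is a right-inverse of $\Theta$. If in addition we assume \ref{assumption:S3}, the Proposition further provides $\widehat{\Psi}\circ\widehat{\Theta}=\mathrm{id}_{\Om_{Lag}(M)}$, and then
\[
\Psi\circ\Theta \;=\; (\widehat{\Psi}\circ\varphi^{-1})\circ(\varphi\circ\widehat{\Theta}) \;=\; \widehat{\Psi}\circ\widehat{\Theta} \;=\; \mathrm{id}_{\Om_{Lag}(M)}.
\]
Hence $\Theta$ is a two-sided inverse to $\Psi$, which establishes the claimed group isomorphism.

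There is essentially no obstacle here: all the substantive content, namely the verification that $\widehat{\Psi}$ is well-defined on $R_\SSS$ (via \ref{assumption:S2}) and the Jordan--Hölder identification of generators (via \ref{assumption:S1} and \ref{assumption:S3}), has already been absorbed into the preceding Proposition. The corollary is a purely formal consequence obtained by conjugating with the isomorphism $\varphi:\langle\SSS\rangle/R_\SSS\xrightarrow{\cong} K_0(D\fuk(M))$ from \eqref{eq:stable-relation-K0}, and the proof is therefore a one-line diagram chase in each direction.
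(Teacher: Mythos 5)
Your proof is correct and follows exactly the route the paper intends: the corollary is stated as an immediate formal consequence of the preceding proposition together with the fact that $\varphi$ is an isomorphism, and your diagram chase conjugating $\widehat{\Theta}\circ\widehat{\Psi}=\mathrm{id}$ and $\widehat{\Psi}\circ\widehat{\Theta}=\mathrm{id}$ by $\varphi$ is precisely the omitted argument.
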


\subsection{The case of the torus $T^2$}\label{sec:example-T^2}
  In this section we will take a closer look at the case of the $2$-torus $T^2$.
  In \cite{Haug-T^2} Haug proves that the Lagrangian cobordism group $\Om_{Lag}(T^2)$ of the torus $T^2$ is isomorphic to $K_0(D\fuk(T^2))$.
  We bring Haug's result into the context of stability conditions and give some alternative arguments of parts of the proof (see \cite[Sec.~7~and~8]{Haug-T^2}).
\begin{notation}
  We start by recalling some notation from \cite{Haug-T^2}.
  The objects of $\fuk^\sharp(T^2)$ are \emph{Lagrangian branes}, i.e. tuples $(L,\theta,P,E)$ consisting of a non-con\-trac\-tible Lagrangian equipped with a grading $\theta$, Pin-structure $P$ and local system $E$ of $\Lambda$-vector spaces (we will usually omit the Pin-structure from the notation).
  For more details, see \cite[Sec.~3]{Haug-T^2}.
  Let $(m,n)$ be a pair of coprime integers. 
  If $(m,n)\neq (\pm 1,0)$, we denote by
  \begin{equation*}
    L_{(m,n),x}\subset T^2
  \end{equation*}
  the straight oriented curve of slope $(m,n)\in\Z^2\cong  H_1(T^2;\Z)$, 
where $x\in \R/\Z$ is the smallest number such that $L_{(m,n),x}$ passes through 
$(x,0)\in T^2$.
  If $(m,n)= (\pm 1,0)$ we denote by $L_{(\pm1,0),x}$ the straight oriented horizontal curve in $T^2$, passing through $(0,x)\in T^2$, and oriented such that it represents $(\pm 1, 0)\in H_1(T^2;\Z)$.
  To abbreviate the notation we write $L_{(m,n)}:=L_{(m,n),0}$.
  The Lagrangians $L_{(m,n),x}$ can be viewed as objects of $\fuk^\sharp(T^2)$ by equipping them with their standard brane structure (see \cite{Haug-T^2}).
  Note also that every non-contractible closed curve on $T^2$ is Hamiltonian-isotopic to one of the curves $L_{(m,n),x}$.
\end{notation}

One of the starting points of Haug's considerations is the following homological mirror symmetry statement, which was proven by Abouzaid-Smith \cite{Abouzaid-Smith-T^4} building on previous work by Polishchuk-Zaslow \cite{Polishchuk-Zaslow} and inspired by Kontsevich \cite{Kontsevich-94}.
In the next theorem $X$ stands for the \emph{Tate curve} which is a specific elliptic curve over $\La$ (see \cite{Abouzaid-Smith-T^4}) and $P_0\in X$ is a base point.
The statement can be found in the following form in \cite[Thm.~7.1]{Haug-T^2}.

\begin{thm}\label{thm:HMS-AS}
  There is an equivalence of triangulated categories
  \begin{equation}\label{eq:mirror-functor}
    \Phi:D^b\mathrm{Coh}(X)\overset{\simeq}{\To} D^\pi\fuk^{\sharp}(T^2)
  \end{equation}
  taking $\OO(nP_0)$ to $L_{(1,-n)}$ for every $n\in \Z$ and the skyscraper sheaf $\Lambda(P_0)$ to $L_{(0,-1),\frac12}$.
\end{thm}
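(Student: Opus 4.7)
The plan is to reconstruct the Polishchuk--Zaslow / Abouzaid--Smith equivalence through an explicit $A_\infty$-functor built on a small generating collection. First I would specify $\Phi$ on the two objects $\OO$ and $\OO(P_0)$, sending them to the Lagrangian branes $L_{(1,0)}$ and $L_{(1,-1)}$ respectively (with the prescribed gradings, Pin-structures, and trivial local systems). Both $\{\OO, \OO(P_0)\}$ split-generate $D^b\mathrm{Coh}(X)$ (any line bundle is an iterated extension of twists, and torsion sheaves arise from short exact sequences $0\to\OO(-nP_0)\to\OO\to \kk(nP_0)\to 0$, cf.\ Example~\ref{ex:SC-on-DCoh-EllCurve}), while by results of Polishchuk--Zaslow and Seidel the corresponding pair of lines split-generates $D^\pi\fuk^\sharp(T^2)$. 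Thus constructing $\Phi$ as a cohomologically full and faithful $A_\infty$-functor on the full subcategory with these two objects and invoking the universal property of split-closed triangulated envelopes would give the equivalence; one then extends $\Phi$ to the rest of the objects listed by verifying $\Phi(\OO(nP_0))\cong L_{(1,-n)}$ and $\Phi(\Lambda(P_0))\cong L_{(0,-1),1/2}$ from the exact triangles encoding the generation.

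The next step is to match morphism spaces. On the $B$-side $\Ext^{*}(\OO(mP_0),\OO(nP_0))$ is well known: $\Ext^0$ equals $H^0(X,\OO((n-m)P_0))$ with basis given by classical theta functions, and $\Ext^1$ is dual by Serre duality; the Euler characteristic is $n-m$ as computed in Example~\ref{ex:SC-on-DCoh-EllCurve}. On the $A$-side, for $n>m$ the lines $L_{(1,-m)}$ and $L_{(1,-n)}$ meet transversely in exactly $n-m$ points on $T^2$, and Floer cohomology $HF^{*}((L_{(1,-m)}),(L_{(1,-n)}))$ is concentrated in degree $0$ with basis these intersection points, with Novikov weights equal to $T^{\mathrm{area}}$ of the bounding strips, which is precisely the standard $q$-expansion of theta functions in $\Lambda$. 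I would define $\Phi^{1}$ to send the theta function basis to the intersection point basis with the matching weights. Symmetric computations take care of the skyscraper sheaf mirror $L_{(0,-1),1/2}$ and verify the Hom-spaces with $\OO(nP_0)$.

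The $A_\infty$-compatibility is the heart of the matter: one must verify that the higher products $\mu^{d}$ on the Fukaya side correspond, under the identification of bases above, to Massey products (reduced to honest $\Ext^{1}\otimes\Ext^{0}\to\Ext^{1}$ Yoneda products in low degree) on the coherent sheaf side. This amounts to the classical fact that the Fukaya $\mu^{2}$ (a triple sum over holomorphic triangles with vertices at the relevant intersection points) is computed by sums $\sum T^{\mathrm{area}(\Delta)}$ which reproduce the theta function product identities $\vartheta_{a}\cdot\vartheta_{b}=\sum c^{c}_{a,b}\vartheta_{c}$ with the correct structure constants; the degenerations to higher $\mu^{d}$ reduce to the addition formulas / Riemann theta identities, extended over $\Lambda$. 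I would invoke Polishchuk's homological perturbation computations on the $B$-side so that only these classical identities need to be matched.

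Finally I would close the argument by combining full-faithfulness on the generating pair with the two split-generation statements, which upgrades $\Phi$ to a triangulated equivalence after passing to triangulated envelopes and split closures. The main obstacle I anticipate is the $A_\infty$-matching step: the triangle/polygon count on $T^2$ is explicit but sign, orientation, Pin-structure, and grading conventions must align perfectly to reproduce the theta identities over $\Lambda$ on the nose, and it is in pinning these conventions (together with the correct choice of the brane datum on $L_{(0,-1),1/2}$ needed to get the skyscraper $\Lambda(P_0)$ rather than a shift or twist) that Abouzaid--Smith's refinement of Polishchuk--Zaslow provides the decisive input.
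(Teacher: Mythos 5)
The paper does not prove this statement at all: Theorem~\ref{thm:HMS-AS} is imported verbatim as an external input, quoted from Haug's formulation of the Abouzaid--Smith result (which in turn builds on Polishchuk--Zaslow), and the rest of Section~\ref{sec:example-T^2} only \emph{uses} the equivalence. So there is no in-paper argument to compare yours against; the honest comparison is with the cited literature.

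Measured against that literature, your outline is a faithful reconstruction of the known strategy, but it is a roadmap rather than a proof: every step that carries real content is itself deferred to a citation. The two places where this matters most are (i) the $A_\infty$-matching, where you correctly identify that the triangle counts must reproduce the theta addition formulas over $\Lambda$ with all signs, gradings and Pin data aligned --- this is precisely the content of Polishchuk's homological perturbation computation and of Abouzaid--Smith's refinement, and nothing in your sketch substitutes for it; and (ii) the two split-generation claims. On the $B$-side, be careful with your parenthetical justification: the multiplication map $H^0(\OO(P_0))\otimes \OO(P_0)\to\OO(2P_0)$ is not surjective (since $h^0(\OO(P_0))=1$), so the pair $\{\OO,\OO(P_0)\}$ is not a tilting object and generation of $D^b\mathrm{Coh}(X)$ from it requires the genuinely more delicate argument in Abouzaid--Smith, not just iterated extensions of twists and the skyscraper sequences. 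On the $A$-side, split-generation of $D^\pi\fuk^\sharp(T^2)$ by the two lines is again a theorem (via Seidel's generation criterion), not something your sketch establishes. None of this makes your plan wrong --- it is the correct plan --- but as written it proves the theorem only modulo the same references the paper already cites, so for the purposes of this paper you should simply cite Theorem~7.1 of Haug (equivalently Abouzaid--Smith) as the author does, or else commit to carrying out the theta-identity computation in full.
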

Haug remarks \cite{Haug-T^2} that the Tate curve $X$ can be studied via its analytification
\begin{align}\label{eq:analytic-Tate}
X^{an}=\La^*/\langle \{T^{k}\, |\, k\in \Z\}\rangle \cong S^1\La \times \R/\Z
\end{align}
and in what follows we will not further distinguish between $X$ and $X^{an}$ (see \cite{Fresnel-vdPut, Silverman-Advanced} for more on rigid-analytic geometry and on the Tate curve).
Here $S^1\La\subset \La$ consists of the elements of the Novikov-field of norm $1$ with respect to the non-Archimedean norm
\begin{equation*}
  \left| \sum_{i=0}^\infty c_i T^{a_i} \right|:= e^{-a_0},
\end{equation*}
and $\La^*=\La\setminus\{0\}$.
\begin{rem}
  Note that the Novikov-field $\La$ over the complex numbers is \emph{algebraically closed}.
  A proof of this fact can be found in \cite[App.~A]{FOOO-toric-I}.
\end{rem}
 Next Haug proceeds to partially recover how the functor $\Phi$ acts on certain objects.
 We will give an alternative proof of Haug's description as follows.

 \begin{prop}\label{prop:phi-correspondence-stable}
   \begin{enumerate}
     \item The image under $\Phi$ of a skyscraper sheaf $\La(Q)$ supported at any point $Q\in X$ is isomorphic to a Lagrangian of slope $(0,-1)\in H_1(T^2;\Z)$, equipped with a rank $1$ local system.
     \item The image under $\Phi$ of a stable sheaf of rank $r$ and degree $d$ is isomorphic to a Lagrangian of slope $(r,-d)\in H_1(T^2;\Z)$, equipped with a rank $1$ local system.
   \end{enumerate}
 \end{prop}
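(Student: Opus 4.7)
My plan for the proposition relies on the mirror equivalence \( \Phi \) of Theorem~\ref{thm:HMS-AS} and on transporting the locally-finite stability condition on \( D^b\mathrm{Coh}(X) \) from Example~\ref{ex:SC-on-DCoh-EllCurve} to \( D^\pi\fuk^\sharp(T^2) \). The first step is to work out the induced map on the quotient \( K_0/\mathrm{rad}\langle\cdot,\cdot\rangle\cong\Z^2 \): using the identifications \( \Phi(\OO(nP_0))=L_{(1,-n)} \) and \( \Phi(\La(P_0))=L_{(0,-1),1/2} \), one checks that the class of a sheaf of rank \( r \) and degree \( d \) is sent to the homology class \( (r,-d)\in H_1(T^2;\Z) \) represented by Lagrangians of slope \( (r,-d) \). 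Consequently, the image under \( \Phi \) of a stable sheaf of rank \( r \) and degree \( d \) is a stable object of \( D^\pi\fuk^\sharp(T^2) \) of homology class \( (r,-d) \) and of the corresponding phase.

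For part (1), given \( Q\in X \), the translation \( T_{Q-P_0}\in\mathrm{Aut}(D^b\mathrm{Coh}(X)) \) on the elliptic curve satisfies \( T_{Q-P_0}(\La(P_0))\simeq\La(Q) \) and acts trivially on \( K_0/\mathrm{rad} \). Transporting through \( \Phi \) yields an auto-equivalence \( \Psi_Q:=\Phi\circ T_{Q-P_0}\circ\Phi^{-1} \) of \( D^\pi\fuk^\sharp(T^2) \) that also acts trivially on \( K_0/\mathrm{rad} \) and satisfies \( \Psi_Q(L_{(0,-1),1/2})\simeq\Phi(\La(Q)) \). I would then identify \( \Psi_Q \) with a geometric operation on \( T^2 \): the composition of a Hamiltonian isotopy (changing the position) together with a twist by a rank 1 local system (changing the holonomy). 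Pinning down \( \Psi_Q \) is done by testing it on the other distinguished generators \( L_{(1,-n)}\simeq\Phi(\OO(nP_0)) \) and tracking how the resulting images sit as stable objects of class \( (1,-n) \), which by an analogous analysis at slope 1 are themselves Lagrangians of slope \( (1,-n) \) carrying rank 1 local systems. It follows that \( \Phi(\La(Q)) \) is isomorphic to a Lagrangian of slope \( (0,-1) \) equipped with a rank 1 local system.

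For part (2), given a stable sheaf \( E \) of rank \( r \) and degree \( d \) with \( \gcd(r,d)=1 \), Theorem~\ref{thm:equivalence-of-semistable-subcat} together with Theorem~\ref{thm:Aut-to-SL-surj} provides an auto-equivalence \( \Xi\in\mathrm{Aut}(D^b\mathrm{Coh}(X)) \) lifting a matrix \( A\in SL(2,\Z) \) with \( A(-d,r)^T=(-1,0)^T \), such that \( \Xi(E)\simeq\La(Q)[k] \) for some \( Q\in X \) and \( k\in\Z \). The \( SL(2,\Z) \)-action is realized geometrically on the A-side by linear symplectomorphisms of \( T^2 \): the matrix \( A^{-1} \) induces a symplectomorphism sending Lagrangian branes of slope \( (0,-1) \) to Lagrangian branes of slope \( A^{-1}(0,-1)^T=(r,-d) \). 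Combining this symplectomorphism, part (1), and a shift by \( -k \), we obtain that \( \Phi(E) \) is isomorphic to a Lagrangian of slope \( (r,-d) \) equipped with a rank 1 local system. The main obstacle throughout is the matching of these algebraic auto-equivalences of \( D^b\mathrm{Coh}(X) \) with their geometric counterparts on the A-side: one must verify that the \( SL(2,\Z) \)-action on \( K_0/\mathrm{rad} \) is intertwined by \( \Phi \) with the action of the (graded) affine symplectomorphism group on Lagrangian branes, and that the \( \mathrm{Pic}^0(X) \)-translations correspond to the combined action of Hamiltonian isotopies and rank 1 local system twists. An alternative which avoids tracking individual auto-equivalences is a direct moduli comparison—showing that the 2-parameter family of Lagrangian branes of slope \( (r,-d) \) with rank 1 local systems (parametrized by position in \( \R/\Z \) and holonomy in \( S^1\La \)) exhausts the moduli of stable objects of class \( (r,-d) \), matching \( X^{an}\cong S^1\La\times\R/\Z \) on the B-side—but in either case, some identification along these lines is the crux of the argument.
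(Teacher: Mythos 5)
Your plan contains a genuine gap at the point you yourself identify as the crux: you never justify that the auto-equivalences you invoke on the B-side (the translations $T_{Q-P_0}$ and the lifts of $SL(2,\Z)$-matrices) correspond under $\Phi$ to geometric operations on the A-side (Hamiltonian isotopies, local-system twists, linear symplectomorphisms). A lift $\Xi$ of $A\in SL(2,\Z)$ is only determined up to the kernel of $\mathrm{Aut}(D^b\mathrm{Coh}(X))\to SL(2,\Z)$, and matching either action with a specific symplectomorphism of $T^2$ is itself a mirror-symmetry statement requiring proof; "testing $\Psi_Q$ on the generators $L_{(1,-n)}$" does not pin down $\Psi_Q$ as a geometric operation without already knowing how $\Psi_Q$ acts on general objects. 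So as written the argument is circular at its key step.

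The paper's proof avoids this entirely and uses only data that is manifestly invariant under triangulated auto-equivalences. First, it classifies \emph{all} stable objects of $D^\pi\fuk^\sharp(T^2)$ intrinsically: by Theorem~\ref{thm:semistable-indecomposable-sheaf}(3) an object is stable iff its endomorphism algebra is the ground field, and a direct Floer computation shows $\mathrm{End}((L_{(m,n),x},\theta,E_M))\cong\La$ together with the vanishing of $\Hom$ between branes with different positions or monodromies; hence every stable object is a shift of some $(L_{(m,n),x},\theta,E_M)$ with $E_M$ of rank $1$. Second, to see that $L_{(r,-d),x}$ mirrors a stable sheaf of rank $r$ and degree $d$, it uses iterated Lagrangian surgery cobordisms to compute $[L_{(r,-d),x}]_{K_0}=[L_{(1,-d)}]_{K_0}+(r-1)[L_{(1,0)}]_{K_0}$ and matches this with an explicit iterated cone of sheaves, rather than transporting an $SL(2,\Z)$-auto-equivalence. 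Third, for the skyscraper $\La(Q)$ it only uses that $\psi_Q$ preserves $\dim\Hom$, combined with $\dim\Hom(F,\La(Q))=\rk(F)$ on the B-side and $\dim\Hom = |mn'-nm'|$ on the A-side, to force the slope $(0,-1)$. If you want to repair your argument, replace the "geometric realization of auto-equivalences" step by this kind of $\Hom$-dimension bookkeeping; your alternative suggestion of a direct moduli comparison is closer in spirit to what the paper actually does, but it too needs the intrinsic classification of stable objects on the A-side as its starting point.
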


 \begin{proof}
   First recall from Theorem~\ref{thm:equivalence-of-semistable-subcat} that the stable coherent sheaves are precisely the indecomposable coherent sheaves of rank $r$ and degree $d$ with $\gcd(r,d)=1$.
   And these are parametrized by $(r,d)\in \Z^2$ and a point $P\in X$ on the curve, so for each coprime pair $(r,d)\in\Z^2$ the curve $X\cong S^1\La\times \R/\Z$ parametrizes the isomorphism classes of stable sheaves of rank $r$ and degree $d$.
   The equivalence $\Phi$ allows us to endow $D^\pi\fuk^{\sharp}(T^2)$ with a stability condition (c.f~Example~\ref{ex:SC-on-DCoh-EllCurve}).
   Note that 
   $$
   \mathrm{End}_{D\fuk(T^2)}((L_{(m,n),x},\theta,E_M))\cong \La
   $$
 since on the chain level, the degree $0$ morphisms are generated by a single element which is in the kernel of the differential (i.e. induces a morphism on the cohomology level).
 Here $E_M$ denotes the rank $1$ local system with monodromy $M\in S^1\La$.
 By Theorems~\ref{thm:semistable-indecomposable-sheaf} and \ref{thm:HMS-AS} this implies that for every $(m,n)\in \Z^2$ with $\gcd(m,n)=1$, every $x\in \R/\Z$ and every $M\in S^1\La$, the object
 $$
 (L_{(m,n),x},\theta,E_M)\in D\fuk^{\sharp}(T^2)\subset D^\pi\fuk^{\sharp}(T^2)
 $$
 is \emph{stable}.
   Also, note that
   $$
   \Hom_{D^\pi\fuk(T^2)}((L_{(m,n),x},\theta,E_M),(L_{(m,n),x},\theta,E_{M'})) =0
   $$
   if the monodromies $M,M'\in S^1\La$ of the rank $1$ local systems $E_M$ and $E_{M'}$ differ, since then the generator of the degree zero morphisms on the chain level does not survive to the cohomology level because the monodromies do not cancel each other; for the precise definition of the morphisms with local systems see \cite[Sect.~3.2]{Haug-T^2}.
   Clearly we also have that 
   $$
   \Hom_{D^\pi\fuk(T^2)}((L_{(m,n),x},\theta,E_M),(L_{(m,n),x'},\theta,E_{M'})) =0
   $$
   for $x\neq x'\in \R/\Z$, since there are no intersection points.
   Therefore we have shown that
\begin{equation*} \text{\emph{all stable objects of $D^\pi\fuk^{\sharp}(T^2)$ are shifts of objects of the form} } (L_{(m,n),x},\theta,E_M)\end{equation*}
with coprime $(m,n)\in\Z^2$, $x\in \R/\Z$ and $E_M$ of rank $1$ with monodromy $M\in S^1\La$ (recall that the grading $\theta$ accounts for shifts, i.e. $(L_{(m,n),x},\theta,E_M)[1] = (L_{(-m,-n),x},\theta-1,E_M)$; see Def.~\ref{df:shift}).\par

Next, let $(r,-d)\in \Z^2$ be a fixed pair of coprime integers with $r>0$ (by shifting $[\cdot]$ it is sufficient to consider the case $r>0$).
We can then iteratively form the Lagrangian surgery (see \cite[Sec.~5]{Haug-T^2}) of $L_{(1,-d)}$ with $L_{(1,0)}$ and obtain, by Corollary~\ref{cor:theta-surjective}, the following equation in $K_0(D^\pi\fuk^{\sharp}(T^2))$:
\begin{equation*}
  [L_{(r,-d),x}]_{K_0}=[L_{(1,-d)}]_{K_0} + (r-1)[L_{(1,0)}]_{K_0},
\end{equation*}
where $L_{(r,-d),x}$ is equipped with the standard brane structure.
Therefore, under the equivalence $\Phi$, the object $L_{(r,-d),x}$ corresponds, up to isomorphism, to an indecomposable object $F\in D^b\mathrm{Coh}(X)$ satisfying
\begin{equation*}
  [F]_{K_0}=[\OO(dP_0)]_{K_0} + (r-1)[\OO_X]_{K_0}.
\end{equation*}
By forming the iterated cone over $\OO(dP_0)$ and $(r-1)$-copies of $\OO_X$ (corresponding to the sequence of Lagrangian surgeries under $\Phi$) we obtain a coherent sheaf $F'$ of rank $r$ and degree $d$ with $\gcd(r,d)=1$ and $[F']_{K_0}=[F]_{K_0}$.
Hence $F'$ is \emph{stable} and since stable sheaves generate the $K_0$-group we must have that $F'\cong F[2k]$ for some integer $k$.
However, since we already know that $\Phi(\OO(dP_0))\cong L_{(1,-d)}$ and $\Phi(\OO_X)\cong L_{(1,0)}$ and since the equivalence $\Phi$ is an equivalence of triangulated categories, it follows that $k=0$ and hence $F\cong F'$.
That is, we have shown that up to isomorphism $L_{(r,-d),x}$ corresponds to a stable sheaf $F$ of rank $r$ and degree $d$ under the equivalence $\Phi$.

Next, note that if $F\in \mathrm{Coh}(X)$ is a stable coherent sheaf not isomorphic to the skyscraper sheaf $\La(Q)$, for some $Q\in X$, then we have
\begin{equation}\label{eq:dimHom-skyscraper}
  \dim\Hom_{D^b\mathrm{Coh}(X)}(F,\La(Q)) = \rk(F)
\end{equation}
and in particular $\Hom_{D^b\mathrm{Coh}(X)}(\La(P_0),\La(Q)) = 0 $ for $Q\in X\setminus \{P_0\}$.
Recall from Theorem~\ref{thm:Aut-to-SL-surj} that there is a surjective homomorphism 
$$
\zeta:\mathrm{Aut}(D^b\mathrm{Coh}(X))\to SL(2,\Z).
$$
The kernel of this homomorphism is generated by $\mathrm{Pic}^0(X)$, $\mathrm{Aut}(X)$ and even shifts, that is by taking the tensor product with degree $0$ line bundles, pulling back along automorphisms of the curve $X$ and taking even shifts (see \cite{Burban-Kreussler-06}).
Every element of $\ker (\zeta)$ preserves the rank and degree of coherent sheaves and the group of automorphisms $\mathrm{Aut}(X)$ of $X$ acts transitively on $X$ (see \cite[Chap.~IV, Cor.~4.3]{Hartshorne-AG}).
Therefore, given any point $Q\in X$ there exists $\psi_Q\in \ker (\zeta)$ such that $\psi_Q(\La(P_0)) = \La(Q)$.
Furthermore, any auto-equivalence $\psi\in \mathrm{Aut}(D^b\mathrm{Coh}(X))$ satisfies 
$$
\Hom_{D^b\mathrm{Coh}(X)}(A,B)\cong \Hom_{D^b\mathrm{Coh}(X)}(\psi(A),\psi(B))
$$
for any pair of objects $A,B$ of $D^b\mathrm{Coh}(X)$.\par
Given any two connected special Lagrangian branes 
\begin{align*}
(L_{(m,n),x},\theta,E_M)\quad \text{and}\quad (L_{(m',n'),x'},\theta',E_{M'})   
\end{align*}
of different slopes $(m,n)\neq(m',n')$ and with rank $1$ local systems, all their intersection points are of the same index.
Therefore, if $(m,n)\neq(m',n')$ and if the degree of their intersection points is $0$, we have
\begin{equation*}
  \dim\Hom_{D\fuk(T^2)}((L_{(m,n),x},\theta,E_M),(L_{(m',n'),x'},\theta',E_{M'})) = |mn' - nm'|
\end{equation*}
i.e. (the absolute value of) their homological intersection number and 
\begin{equation*}
\Hom_{D\fuk(T^2)}((L_{(m,n),x},\theta,E_M),(L_{(m',n'),x'},\theta',E_{M'}))=0
\end{equation*}
otherwise (this depends on the difference in their gradings $\theta$, $\theta'$).
Hence, by \eqref{eq:dimHom-skyscraper} and Theorem~\ref{thm:HMS-AS} we conclude that $\psi_Q(\La(P_0)) = \La(Q)$ corresponds, up to isomorphism, to a Lagrangian of the form $(L_{(0,-1),x},\theta,E_M)$, for some $x\in\R/\Z$, equipped with its standard grading and a local system of rank $1$ and monodromy $M\in S^1\La$.
This shows the first part of the proposition.\par
As we have seen above, $L_{(r,-d),x}$ corresponds, up to isomorphism, to a stable sheaf $F$ of rank $r$ and degree $d$ under the equivalence $\Phi$.
Hence, the second part follows from the first part together with Theorem~\ref{thm:equivalence-of-semistable-subcat}.
   
 \end{proof}

 \begin{rem}\label{rem:S1-satisfied}
   Recall that $D^b\mathrm{Coh}(X)$ admits a locally-finite stability condition (see Example~\ref{ex:SC-on-DCoh-EllCurve}) and hence, we obtain a locally-finite stability condition $(Z^{T^2},\P^{T^2})$ on $D^\pi\fuk^\sharp(T^2)$ via the equivalence $\Phi$.
   As we can see from Proposition~\ref{prop:phi-correspondence-stable}, all the stable objects are \emph{geometric} and therefore Assumption~\ref{assumption:S1} is satisfied.
 \end{rem}
 \begin{rem}
   By Proposition~\ref{prop:phi-correspondence-stable} all the stable objects of $D^\pi\fuk^\sharp(T^2)$ are in fact objects of $D\fuk^\sharp(T^2)$.
   Therefore $D\fuk^\sharp(T^2)$ admits a stability condition itself and hence is \emph{split closed} (cf.~Section~\ref{sec:consequence-example}) and equivalent to $D^\pi\fuk^\sharp(T^2)$.
   This allows us to simply write $D\fuk^\sharp(T^2)$ for $D^\pi\fuk^\sharp(T^2)$ from now on.
 \end{rem}

As we have seen in Theorem~\ref{thm:semistable-indecomposable-sheaf}, all the Jordan-H\"older factors of an indecomposable semistable object are isomorphic.
Translating this to $D\fuk(T^2)$, this means that all the indecomposable semistable objects are non-trivial extensions of the local systems on stable objects $(L_{(m,n),x},\theta,E_M)$.
This leads to the following corollary (cf.~\cite[Prop.~7.3]{Haug-T^2}).

\begin{cor}\label{cor:phi-correspondence-semistable}
     \begin{enumerate}
     \item The image under $\Phi$ of an indecomposable coherent torsion sheaf on $X$ of degree $d$ is isomorphic to a Lagrangian of slope $(0,-1)\in H_1(T^2;\Z)$, equipped with an indecomposable local system of rank $d$.
     \item The image under $\Phi$ of an indecomposable, coherent sheaf of rank $r>0$ and degree $d$ is isomorphic to a Lagrangian of slope $\frac{1}{h}(r,-d)\in H_1(T^2;\Z)$, equipped with an indecomposable local system of rank $h$, where $h:=\gcd(r,d)$.
   \end{enumerate}
\end{cor}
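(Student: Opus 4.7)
The plan is to combine Theorem~\ref{thm:semistable-indecomposable-sheaf} with Proposition~\ref{prop:phi-correspondence-stable} in order to identify Jordan--H\"older factors on both sides of the mirror, and then to use \cite[Prop.~10.1]{Haug-T^2} to realise an iterated self-extension geometrically as a single Lagrangian equipped with an indecomposable higher-rank local system.

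Let $F$ be an indecomposable coherent sheaf on $X$. By Theorem~\ref{thm:semistable-indecomposable-sheaf}(i)--(ii), $F$ is semistable and all of its Jordan--H\"older factors are isomorphic to a single stable sheaf $S$. In the torsion case of degree $d$, $S$ must be a skyscraper sheaf $\La(Q)$ (which has degree $1$), so there are exactly $d$ factors; in the rank $r>0$ case with $h=\gcd(r,d)$, $S$ must have rank and degree $(r/h,d/h)$, so there are exactly $h$ factors. By Proposition~\ref{prop:phi-correspondence-stable}, $\Phi(S)$ is isomorphic to a Lagrangian brane $(L,\theta,E_M)$ whose underlying Lagrangian has slope $(0,-1)$ or $(r/h,-d/h)$ respectively, and whose local system $E_M$ has rank $1$ with some monodromy $M\in S^1\La$.

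I would next exhibit the geometric candidate for $\Phi(F)$. Using \cite[Prop.~10.1]{Haug-T^2}, a short exact sequence of local systems $0\to E'\to E\to E''\to 0$ on $L$ gives rise to an exact triangle $(L,\theta,E')\to (L,\theta,E)\to (L,\theta,E'')\to (L,\theta,E')[1]$ in $D\fuk^\sharp(T^2)$. Iterating the non-trivial rank-one self-extension of $E_M$ a total of $n-1$ times, one constructs for each $n\geq 1$ a Lagrangian brane $(L,\theta,E_n)$ whose local system is indecomposable of rank $n$ (with monodromy conjugate to the size-$n$ Jordan block of eigenvalue $M$) and whose Jordan--H\"older filtration in $D\fuk^\sharp(T^2)$ consists of $n$ copies of $(L,\theta,E_M)$. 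Specialising to $n=d$ in the torsion case and to $n=h$ in the rank case, $(L,\theta,E_n)$ is an indecomposable semistable object of $D\fuk^\sharp(T^2)$ with precisely the same Jordan--H\"older factors as $\Phi(F)$.

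To conclude, one would show that $\Phi(F)\cong (L,\theta,E_n)$. On the $B$-side, Theorem~\ref{thm:semistable-indecomposable-sheaf} together with the classification of indecomposable coherent sheaves on an elliptic curve (see \cite{Atiyah-VBEC, Bodnarchuk-Burban-Drozd-Greuel}) implies that an indecomposable semistable sheaf is determined up to isomorphism by its stable factor $S$ and the multiplicity of $S$ in its Jordan--H\"older filtration; applying $\Phi^{-1}$ to $(L,\theta,E_n)$ produces an indecomposable semistable sheaf with stable factor $S$ and multiplicity $n$, so this rigidity forces $\Phi^{-1}((L,\theta,E_n))\cong F$, giving the desired isomorphism. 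The main obstacle will be justifying the indecomposability of $(L,\theta,E_n)$ in $D\fuk^\sharp(T^2)$: this relies again on \cite[Prop.~10.1]{Haug-T^2}, since a direct-sum splitting of $(L,\theta,E_n)$ in $D\fuk^\sharp(T^2)$ would descend to a direct-sum splitting of the local system $E_n$, contradicting our choice of $E_n$ as a single Jordan block.
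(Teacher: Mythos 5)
Your argument is correct and follows essentially the same route as the paper, which obtains the corollary by combining Theorem~\ref{thm:semistable-indecomposable-sheaf} (all Jordan--H\"older factors of an indecomposable semistable object are isomorphic) with Proposition~\ref{prop:phi-correspondence-stable} and the translation of extensions of local systems into exact triangles; the paper itself only sketches this, deferring to Haug, while you additionally spell out the multiplicity count and the rigidity/uniqueness step on the $B$-side. The one place your justification is thinner than it should be is the indecomposability of $(L,\theta,E_n)$: this does not follow from \cite[Prop.~10.1]{Haug-T^2} alone, but rather from the computation that $\mathrm{End}_{D\fuk^\sharp(T^2)}((L,\theta,E_n))$ is the commutant of the monodromy of $E_n$, which is a local ring when $E_n$ is a single Jordan block.
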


\begin{rem}\label{rem:S3-satisfied}
  Short exact sequences of local systems on the same underlying Lagrangian induce relations in the Lagrangian cobordism group (cf.~Def.~\ref{df:Lag.Cob.group}).
  Therefore, for every Lagrangian brane $(L_{(m,n),x},\theta,E)$  with local system of rank $h$ we have the relation
  \begin{equation*}
    [(L_{(m,n),x},\theta,E)]_{Lag} = \sum_{j=1}^h [(L_{(m,n),x},\theta,E_{M_j})]_{Lag}
  \end{equation*}
  in $\Om_{Lag}(M)$, where $(L_{(m,n),x},\theta,E_{M_j})$ are the stable factors equipped with local systems of rank $1$ with possibly different monodromies $M_j\in S^1\La$.
  This means that $(Z^{T^2},\P^{T^2})$ also satisfies Assumption~\ref{assumption:S3}.
\end{rem}

\begin{rem}
  We will use a slightly different definition of the \emph{standard grading} than Haug \cite[Sec.~3.1.1]{Haug-T^2}, the difference is just a factor of $\frac12$.
  The \emph{standard grading} of a Lagrangian in $T^2$ of slope $(m,n)\in H_1(T^2;\Z)$ is defined as the unique number $\theta\in [-\frac12,\frac12)$ satisfying
  \begin{equation*}
    e^{2\pi i\theta} = \frac{m + i n}{\sqrt{m^2 + n^2}}.
  \end{equation*}
  Recall from Example~\ref{ex:SC-on-DCoh-EllCurve} that
  $$
  Z:K_0(D^b\mathrm{Coh}(X))\To \C
  $$
  was given by $Z(F) = -\deg(F) + i\cdot \rk (F)$ for a coherent sheaf $F$ on $X$.
  Hence, by Corollary~\ref{cor:phi-correspondence-semistable} we can see that
   $$
  Z^{T^2}:K_0(D\fuk^\sharp(T^2))\To \C
  $$
  is given by
  $$
  Z^{T^2}(L_{(m,n),x},\theta,E) = \rk(E)\cdot (n + i\cdot m )
  $$
  for Lagrangian branes $(L_{(m,n),x},\theta,E)$ equipped with their standard grading.
  Therefore, we get the following relation between the \emph{phase} and the standard grading
  $$
  \phi((L_{(m,n),x},\theta,E))= \frac12 - \theta \in (0,1].
  $$
  This also fits with the relation
$$
(L_{(m,n),x},\theta,E)[1] = (L_{(-m,-n),x},\theta-1,E) \in \P^{T^2}((L_{(m,n),x},\theta,E))[1].
$$
  That is, if the \emph{grading} decreases by $1$, then the \emph{phase} increases by $1$ as expected (cf.~Def.~\ref{df:shift}), and vice versa.
\end{rem}

Haug proceeds to show 
that all the $K_0$-relations among indecomposable coherent sheaves, are induced by relations in the cobordism group $\Om_{Lag}(T^2)$ under the equivalence $\Phi$ (for more details, see \cite[Sect.~8.1.1]{Haug-T^2}).
In order to show this, Haug uses the following classification result of Atiyah, see \cite[Thm.~3]{Atiyah-VBEC} and \cite[Thm.~11.1]{Haug-T^2}) for this particular formulation:
\begin{thm}\label{thm:Atiyah-1}
  There exists an integer $N(r,d)$ such that for every $n\geq N(r,d)$, every indecomposable sheaf $E$ of rank $r>0$ and degree $d$ fits, up to isomorphism, into the short exact sequence
  \begin{align*}
    0 \To \left( \OO_X^{\oplus r-1}\right) (-n) \To E \To (\det E) ((r-1)n) \To 0.
  \end{align*}
\end{thm}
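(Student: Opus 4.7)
The plan is to follow the classical argument due to Atiyah \cite{Atiyah-VBEC}, building on the fact (Theorem~\ref{thm:semistable-indecomposable-sheaf}) that any indecomposable coherent sheaf on the elliptic curve $X$ is semistable, and, being of positive rank, also locally free. First I would verify that for $n$ sufficiently large the twisted bundle $E(n) := E \otimes \OO_X(n)$ satisfies $H^1(X, E(n)) = 0$ and is globally generated. The vanishing follows from Serre duality and the isomorphism $K_X \cong \OO_X$: we have $H^1(X, E(n)) \cong H^0(X, E^\vee(-n))^\vee$, and $E^\vee(-n)$ is semistable of slope $-d/r - n$, which becomes negative once $n > -d/r$, so admits no non-zero sections. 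Global generation of semistable bundles of sufficiently large slope on a smooth projective curve is classical, and Riemann--Roch then yields $h^0(X, E(n)) = \chi(E(n)) = d + rn$.

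Second, I would produce a short exact sequence
\begin{align*}
  0 \To \OO_X^{\oplus(r-1)} \To E(n) \To Q \To 0
\end{align*}
by choosing $r-1$ global sections of $E(n)$ that span a rank $(r-1)$ subbundle, so that the cokernel $Q$ is automatically a line bundle of degree $d + rn$. This is the technical heart of the proof: one shows via a dimension count in $H^0(X, E(n))^{\oplus(r-1)}$ that, once $n$ is large enough for $E(n)$ to be globally generated, a generic $(r-1)$-tuple of sections is everywhere pointwise linearly independent. Equivalently, the locus of tuples whose evaluations at some $p \in X$ exhibit a non-trivial linear dependence has positive codimension, so a generic choice avoids it.

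Third, twisting the above sequence by $\OO_X(-n)$ yields
\begin{align*}
  0 \To \OO_X(-n)^{\oplus(r-1)} \To E \To Q(-n) \To 0,
\end{align*}
and taking determinants gives $\det E \cong \OO_X(-(r-1)n) \otimes Q(-n)$, whence $Q(-n) \cong (\det E)((r-1)n)$, as required. Setting $N(r,d)$ to be the maximum of the thresholds arising in the vanishing, global generation, and genericity steps produces the uniform bound asserted in the theorem.

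The main obstacle is the second step: guaranteeing that a generic $(r-1)$-tuple of sections cuts out a subbundle of rank $r-1$ rather than merely a subsheaf (whose saturation would introduce torsion in the cokernel). On an elliptic curve this is handled directly by Atiyah's original argument in \cite{Atiyah-VBEC}; a more modern route would invoke generic-generation theorems for semistable sheaves of sufficiently high slope, in the spirit of Butler or Lange--Narasimhan. Both approaches depend essentially on the semistability of $E$ supplied by Theorem~\ref{thm:semistable-indecomposable-sheaf}, which is why indecomposability is the natural hypothesis under which to state the result.
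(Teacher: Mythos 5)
Your sketch is correct, but note that the paper does not prove this statement at all: it is quoted verbatim as a classical result of Atiyah, with references to \cite[Thm.~3]{Atiyah-VBEC} and \cite[Thm.~11.1]{Haug-T^2}, so there is no in-paper proof to compare against. What you outline is the standard modern argument (twist so that $E(n)$ has vanishing $H^1$ and is globally generated, pick a generic $(r-1)$-tuple of sections spanning a subbundle via the codimension count, untwist, and identify the line-bundle quotient by taking determinants), and each step goes through: indecomposable of positive rank implies locally free and semistable by Theorem~\ref{thm:semistable-indecomposable-sheaf}, the Serre-duality vanishing and global-generation thresholds depend only on the slope $d/r$, hence give a uniform $N(r,d)$, and the genericity argument is valid since the ground field (the Novikov field) is algebraically closed. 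The only slip is cosmetic: the paper's $\OO_X(1)$ is a hyperplane bundle, which for the cubic embedding has degree $3$ rather than $1$ (the paper later uses slope $(1,-d-3n(r-1))$ accordingly), so your slope thresholds and the count $h^0(E(n))=d+rn$ should carry a factor of $\deg\OO_X(1)$; this does not affect the structure of the argument.
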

\noindent In the above theorem $\bullet (n)$ denotes tensoring by the $n^{th}$ power of a hyperplane bundle.
A hyperplane bundle is a line bundle corresponding to a hyperplane section (see \cite{Atiyah-VBEC}).\par
More precisely, Haug proves the following (see \cite[Prop.~8.1]{Haug-T^2})
\begin{prop}\label{prop:generators-relation-stable}
  The set $R^X_\SSS$ of all $K_0$-relations among stable coherent sheaves on $X$ is generated by short exact sequences of the form
  \begin{enumerate}[ref=(\roman*)]
    \item \label{prop:generators-relation-stable-type1} $0\to F\to E\to 0$ with $F,E\in \mathrm{Coh}(X)$ both stable,
    \item \label{prop:generators-relation-stable-type2} $0\to \OO(D-Q) \to \OO(D) \to \La(Q)\to 0$ with $D$ a divisor and $Q\in X$,
    \item \label{prop:generators-relation-stable-type3} $0\to\left( \OO_X^{\oplus r-1}\right) (-n) \to E \to (\det E) ((r-1)n) \to 0$, with a stable sheaf $E\in \mathrm{Coh}(X)$ of rank $r>0$ and $n\in \Z$ as in Theorem~\ref{thm:Atiyah-1}.
  \end{enumerate}
\end{prop}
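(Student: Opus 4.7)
The plan is to take an arbitrary element $\rho=\sum_i n_i[E_i]\in R^X_\SSS$ (a $\Z$-combination of classes of stable sheaves $E_i$) and, using the three families of short exact sequences, reduce it step by step to a trivial relation. The backbone is the structural identification $K_0(\Coh(X))\cong \Z\oplus \mathrm{Pic}(X)$ via $\mathcal{F}\mapsto (\rk\mathcal{F},\det\mathcal{F})$, so that once $\rho$ has been brought to a normal form involving only $\OO_X$ and skyscrapers, its vanishing in $K_0$ translates into the linear equivalence of a divisor on $X$ to zero.

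Step one uses the relations of type~\ref{prop:generators-relation-stable-type3}. By Theorem~\ref{thm:semistable-indecomposable-sheaf} and Theorem~\ref{thm:equivalence-of-semistable-subcat}, the stable sheaves on $X$ are exactly the line bundles, the skyscrapers $\La(Q)$, and the indecomposable bundles of rank $r\geq 2$ and degree $d$ with $\gcd(r,d)=1$. For every term $[E_i]$ with $\rk E_i=r_i\geq 2$, Theorem~\ref{thm:Atiyah-1} provides, for $n$ large enough, a short exact sequence of the form~\ref{prop:generators-relation-stable-type3}; substituting $[E_i]$ by $(r_i-1)[\OO_X(-n)]+[(\det E_i)((r_i-1)n)]$ alters $\rho$ by an element of the subgroup generated by~\ref{prop:generators-relation-stable-type3}. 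Performing this replacement for each such $E_i$ makes $\rho$ equivalent, modulo~\ref{prop:generators-relation-stable-type3}, to a combination of classes of line bundles and skyscrapers only.

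Step two iterates the type~\ref{prop:generators-relation-stable-type2} relations to rewrite every line bundle. For any divisor $D=\sum_k a_k P_k$, repeated use of~\ref{prop:generators-relation-stable-type2} (in both directions $[\OO(D)]=[\OO(D-Q)]+[\La(Q)]$ and $[\OO(D+Q)]=[\OO(D)]+[\La(Q)]$) gives
\begin{align*}
[\OO(D)] \; \equiv \; [\OO_X] + \sum_k a_k\,[\La(P_k)] \pmod{\text{relations of type~\ref{prop:generators-relation-stable-type2}}}.
\end{align*}
Applying this to every line bundle appearing in $\rho$ produces a representative of the form
\begin{align*}
a\,[\OO_X]+\sum_j b_j\,[\La(Q_j)] \; = \; 0 \quad\text{in } K_0(D^b\Coh(X)),
\end{align*}
with distinct points $Q_j\in X$ and integers $a,b_j$.

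Step three, which carries the real conceptual content, uses~\ref{prop:generators-relation-stable-type1} to close the argument. The rank component of the isomorphism $K_0(\Coh(X))\cong \Z\oplus\mathrm{Pic}(X)$ forces $a=0$, while the $\det$ component forces $\sum_j b_j Q_j\sim 0$ in $\mathrm{Pic}(X)$. Splitting $\sum_j b_j Q_j=D_+-D_-$ as a difference of two effective divisors yields $\OO(D_+)\cong \OO(D_-)$, which is an isomorphism of stable line bundles and hence a relation of type~\ref{prop:generators-relation-stable-type1}; expanding both sides by step two produces precisely $\sum_j b_j[\La(Q_j)]=0$, so $\rho$ itself lies in the subgroup generated by~\ref{prop:generators-relation-stable-type1}, \ref{prop:generators-relation-stable-type2}, and~\ref{prop:generators-relation-stable-type3}. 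The main obstacle is exactly this passage from a \emph{linear equivalence} of divisors (an abstract identity in $\mathrm{Pic}(X)$ coming from $K_0$) to an honest \emph{isomorphism} of line bundles $\OO(D_+)\cong \OO(D_-)$ on the elliptic curve, which is what makes~\ref{prop:generators-relation-stable-type1} applicable; the remaining work is bookkeeping of the Atiyah exponents in step one and of signs and multiplicities in the iterated expansions of step two.
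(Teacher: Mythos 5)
Your proposal is correct, and it follows essentially the route the paper indicates: the paper itself defers the detailed argument to Haug \cite[Sect.~8.1.1]{Haug-T^2} but explicitly flags the isomorphism $(\det,\rk):K_0(X)\to\mathrm{Pic}(X)\oplus\Z$ as the key ingredient, which is exactly the engine of your Step three, while your Steps one and two (Atiyah's sequence to kill higher rank, then peeling points off divisors via the skyscraper sequences) are the same reductions Haug performs. The only points worth tightening are bookkeeping ones you already acknowledge: identifying an arbitrary stable line bundle with some $\OO(D)$, and splitting $\OO_X^{\oplus r-1}(-n)$ into $(r-1)$ copies of $\OO_X(-n)$ in $K_0$, both of which are absorbed by type~(i) relations and split exact sequences.
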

For a proof of this proposition see \cite[Sect.~8.1.1]{Haug-T^2}, an important ingredient is the fact that 
\begin{align}\label{eq:rk-det-iso-Pic}
  (\det,\rk) : K_0(X) \To \mathrm{Pic}(X)\oplus \Z
\end{align}
is an isomorphism (see~\cite{Hartshorne-AG} or \cite[p.34]{LePotier-VB}).
In fact Haug proves a slightly stronger result involving all relations among indecomposable coherent sheaves.
However, by \eqref{eq:stable-relation-K0} it suffices to consider the relations among stable coherent sheaves.
Haug then shows that the relations of the three types in Proposition~\ref{prop:generators-relation-stable} are induced by relations in the Lagrangian cobordism group.
The relations of type \ref{prop:generators-relation-stable-type1} identify isomorphic stable sheaves which correspond to Hamiltonian isotopic Lagrangian branes which, by Lagrangian suspension, are cobordant.
If $D$ is a divisor of degree $d$, then $\OO(D-Q)$, resp. $\La(Q)$ correspond under \eqref{eq:mirror-functor} to Lagrangian branes of slopes $(1,-(d-1))$ resp. $(0,-1)$.
One can perform Lagrangian surgery of these two Lagrangians and obtain a cobordism between them and a Lagrangian brane of slope $(1,-d)$.
As $\dim \mathrm{Ext}^1(\La(Q),\OO(D-Q)) = 1$, $\OO(D)$ must correspond via \eqref{eq:mirror-functor} to the Lagrangian brane of slope $(1,-d)$ obtained in this way.
This shows that the relations of type \ref{prop:generators-relation-stable-type2} are induced by Lagrangian cobordisms (see also \cite[Sec.~7.5]{Haug-T^2}).
For the relations of type \ref{prop:generators-relation-stable-type3}, one can construct a relation in $\Om_{Lag}(T^2)$ by iteratively surgering the Lagrangian brane of slope $(1,-d-3n(r-1))$, corresponding to the line bundle $(\det E) ((r-1)n)$, with $(r-1)$-copies of the Lagrangian brane of slope $(1,3n)$, corresponding to $\OO_X(-n)$.
Using \eqref{eq:rk-det-iso-Pic} Haug \cite[Sect.~7.5]{Haug-T^2} then shows that the Lagrangian obtained after these surgeries corresponds to $E$ under \eqref{eq:mirror-functor}.
Therefore, relations of type \ref{prop:generators-relation-stable-type3} are induced by relations in $\Om_{Lag}(T^2)$ as well.
Moreover, by Remark~\ref{rem:shift-cobordism}, relations in the $K_0$-group coming from shifts are also induced by relations in $\Om_{Lag}(T^2)$.

This implies that all the $K_0$-relations between stable objects of $D^b\mathrm{Coh}(X)$ are induced by relations in $\Om_{Lag}(T^2)$ and therefore Assumption~\ref{assumption:S2} is satisfied as well.

In conclusion, combining this observation with Remarks~\ref{rem:S1-satisfied}, \ref{rem:S3-satisfied} and Corollary~\ref{cor:theta-iso} we obtain the following result (cf.~\cite[Thm.~1.3]{Haug-T^2}).
\begin{thm}\label{thm:Haug-isomorphism}
  $D\fuk^\sharp(T^2)$ admits a locally-finite stability condition satisfying Assumptions~\ref{assumption:S1}, \ref{assumption:S2} and \ref{assumption:S3}, and 
  \begin{align*}
    \Theta : \Om_{Lag}(T^2) \To K_0(D\fuk^\sharp(T^2))
  \end{align*}
  is a group isomorphism.
\end{thm}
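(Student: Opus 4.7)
The plan is to verify the three assumptions \ref{assumption:S1}, \ref{assumption:S2}, \ref{assumption:S3} for $D\fuk^\sharp(T^2)$ and then simply invoke Corollary~\ref{cor:theta-iso}. The starting point is the homological mirror symmetry equivalence $\Phi:D^b\mathrm{Coh}(X)\simeq D\fuk^\sharp(T^2)$ of Theorem~\ref{thm:HMS-AS}, which allows us to transport the stability condition on $D^b\mathrm{Coh}(X)$ constructed in Example~\ref{ex:SC-on-DCoh-EllCurve} to a locally-finite stability condition $(Z^{T^2},\P^{T^2})$ on $D\fuk^\sharp(T^2)$.

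First I would establish \ref{assumption:S1}: by Proposition~\ref{prop:phi-correspondence-stable}, every stable sheaf on $X$ corresponds under $\Phi$ to a Lagrangian brane $(L_{(m,n),x},\theta,E_M)$ with rank one local system, so all stable objects are geometric (as recorded in Remark~\ref{rem:S1-satisfied}). Next, \ref{assumption:S3} follows immediately from Remark~\ref{rem:S3-satisfied}: any indecomposable semistable object corresponds to a Lagrangian brane equipped with an indecomposable local system of some rank $h$, whose short exact decomposition into rank one local systems with appropriate monodromies yields, through type (ii) of Definition~\ref{df:Lag.Cob.group}, precisely the required relation $[L]_{Lag}=\sum_j[S^L_j]_{Lag}$ among Jordan-H\"older factors.

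The main obstacle is \ref{assumption:S2}, i.e.\ showing that every $K_0$-relation among stable objects of $D\fuk^\sharp(T^2)$ is induced by a relation in $\Om_{Lag}(T^2)$. For this I would use Haug's Proposition~\ref{prop:generators-relation-stable}, which reduces the problem to three classes of generating relations together with the shift relations of Remark~\ref{rem:shift-cobordism}. Relations of type~\ref{prop:generators-relation-stable-type1} are identifications between isomorphic stable sheaves, whose mirror Lagrangian branes are Hamiltonian isotopic and therefore cobordant via Lagrangian suspension. Relations of type~\ref{prop:generators-relation-stable-type2} are realised geometrically by Lagrangian surgery: the Lagrangians mirror to $\OO(D-Q)$ and $\La(Q)$, of slopes $(1,-(d-1))$ and $(0,-1)$, can be surgered to produce a cobordism to a Lagrangian of slope $(1,-d)$, which must be the mirror of $\OO(D)$ by uniqueness of the nontrivial extension. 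Relations of type~\ref{prop:generators-relation-stable-type3} are handled by iterating this surgery procedure: $(r-1)$ successive surgeries of the Lagrangian mirror to $(\det E)((r-1)n)$ with copies of the Lagrangian mirror to $\OO_X(-n)$ yield a cobordism whose remaining end, by the isomorphism $(\det,\rk):K_0(X)\to \mathrm{Pic}(X)\oplus \Z$ of \eqref{eq:rk-det-iso-Pic}, must correspond to $E$ under $\Phi$.

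With \ref{assumption:S1}, \ref{assumption:S2}, \ref{assumption:S3} all verified, Corollary~\ref{cor:theta-iso} immediately gives that $\Theta:\Om_{Lag}(T^2)\to K_0(D\fuk^\sharp(T^2))$ is a group isomorphism, concluding the proof. The conceptual content of the argument lies in step~\ref{assumption:S2}, which is really a repackaging of Haug's surgery constructions from Section~7.5 and Section~8.1.1 of \cite{Haug-T^2}; the advantage of framing the result in the language of stability conditions is that only stable objects need to be considered, and all semistable/general relations are then handled uniformly by the Jordan-H\"older mechanism of \ref{assumption:S3}.
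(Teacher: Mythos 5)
Your proposal is correct and follows essentially the same route as the paper: transporting the stability condition from $D^b\mathrm{Coh}(X)$ via $\Phi$, verifying \ref{assumption:S1} and \ref{assumption:S3} through Proposition~\ref{prop:phi-correspondence-stable} and the short-exact-sequence relations for local systems, reducing \ref{assumption:S2} to Haug's generating relations and their realisation by Lagrangian suspension and surgery, and then invoking Corollary~\ref{cor:theta-iso}. No substantive differences from the paper's argument.
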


In some sense Corollary~\ref{cor:theta-iso} demystifies the connection between the Lagrangian cobordism group and the $K_0$-group.
Theorem~\ref{thm:Haug-isomorphism} yields a new perspective on Haug's result and hints at a deeper reason behind why Haug's arguments worked.\par\bigskip

In order to describe the Lagrangian cobordism group of $T^2$ explicitly we include the following corollary. 

\begin{cor}
 $\Om_{Lag}(T^2)\cong (S^1\Lambda \times \R / \Z) \oplus \Z^2.$
\end{cor}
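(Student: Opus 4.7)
The plan is to chain together the isomorphisms already established in the paper with the classical structure theorems for the $K$-group of an elliptic curve. First, by Theorem~\ref{thm:Haug-isomorphism} the map $\Theta$ provides an isomorphism $\Om_{Lag}(T^2)\cong K_0(D\fuk^\sharp(T^2))$, so it suffices to compute the right-hand side. The Abouzaid--Smith mirror equivalence $\Phi:D^b\mathrm{Coh}(X)\overset{\simeq}{\to}D\fuk^\sharp(T^2)$ of Theorem~\ref{thm:HMS-AS} (together with the observation in the text that $D\fuk^\sharp(T^2)=D^\pi\fuk^\sharp(T^2)$) induces an isomorphism on $K_0$-groups, so we are reduced to computing $K_0(D^b\mathrm{Coh}(X))=K_0(X)$ for the Tate curve $X$.

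Next, I would invoke the isomorphism \eqref{eq:rk-det-iso-Pic}, namely
\[
(\det,\rk):K_0(X)\overset{\cong}{\To}\mathrm{Pic}(X)\oplus\Z,
\]
which is recalled just after Proposition~\ref{prop:generators-relation-stable}. It then remains to identify $\mathrm{Pic}(X)$. For any elliptic curve the degree map fits into a split short exact sequence
\[
0\To \mathrm{Pic}^0(X)\To \mathrm{Pic}(X)\overset{\deg}{\To}\Z\To 0,
\]
split by $n\mapsto\OO(nP_0)$, giving $\mathrm{Pic}(X)\cong \mathrm{Pic}^0(X)\oplus \Z$. The Abel--Jacobi map $P\mapsto \OO(P-P_0)$ identifies $X$ with $\mathrm{Pic}^0(X)$ as abelian groups. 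Finally, using the analytic description \eqref{eq:analytic-Tate} we have $X\cong X^{an}\cong S^1\Lambda\times\R/\Z$. Combining all these isomorphisms yields
\[
\Om_{Lag}(T^2)\cong K_0(X)\cong \mathrm{Pic}(X)\oplus\Z \cong (S^1\Lambda\times\R/\Z)\oplus\Z\oplus\Z,
\]
as claimed.

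The argument is essentially a bookkeeping exercise once Theorem~\ref{thm:Haug-isomorphism} is in place, and I do not expect any real obstacle; the only point requiring a small verification is that the Abel--Jacobi map is a group isomorphism in the rigid-analytic setting over the Novikov field $\Lambda$. This follows from the standard theory of the Tate curve (as referenced in the text via \cite{Fresnel-vdPut, Silverman-Advanced}), where the group law on $X^{an}=\Lambda^*/\langle T\rangle$ comes from the multiplication on $\Lambda^*$ and is compatible with the Abel--Jacobi identification $\mathrm{Pic}^0(X)\cong X$. No further computation is needed.
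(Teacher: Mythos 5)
Your proposal is correct and follows essentially the same chain of isomorphisms as the paper: $\Om_{Lag}(T^2)\cong K_0(D\fuk^\sharp(T^2))\cong K_0(D^b\mathrm{Coh}(X))\cong K_0(X)\cong\mathrm{Pic}(X)\oplus\Z\cong(S^1\Lambda\times\R/\Z)\oplus\Z^2$. The only step you pass over with an equals sign, $K_0(D^b\mathrm{Coh}(X))\cong K_0(\mathrm{Coh}(X))$, is the one the paper actually proves in detail (via the cohomology decomposition \eqref{eq:direct-sum-of-cohomology-objects} and the truncation triangles \eqref{eq:truncation-triangles}), though it is standard and also follows from the general fact that $K_0(\D)\cong K_0(\A)$ for $\A$ the heart of a bounded $t$-structure.
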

\begin{proof}
  By Theorems~\ref{thm:Haug-isomorphism} and~\ref{thm:HMS-AS} we have isomorphisms
$$
\Om_{Lag}(T^2) \cong K_0(D\fuk^\sharp(T^2))\cong K_0(D^b\mathrm{Coh}(X)).
$$
We can view $\mathrm{Coh}(X)$ as the subcategory of $D^b\mathrm{Coh}(X)$ given by complexes concentrated in degree $0$ and we denote the inclusion by $\iota:\mathrm{Coh}(X)\hookrightarrow D^b\mathrm{Coh}(X)$.
 Short exact sequences $0\to A \to B \to C\to 0$ in $\mathrm{Coh}(X)$ are in one to one correspondence with exact triangles $A\to B\to C\to A[1]$ in $D^b\mathrm{Coh}(X)$ with $A,B,C\in \mathrm{Coh}(X)\subset D^b\mathrm{Coh}(X)$ (this is true in general if $\mathrm{Coh}(X)$ is replaced by any abelian category; see~\cite[Ex.~2.27]{Huybrechts-Fourier-Mukai}).
  Denote by $K_0(X)$ the Grothendieck group of $\mathrm{Coh}(X)$, which is the free abelian group generated by objects of $\mathrm{Coh}(X)$ modulo the relations $[B]=[A]+[C]$ for every short exact sequence $0\to A\to B \to C \to 0$ in $\mathrm{Coh}(X)$.
  Note that every exact triangle $A^\bullet\to B^\bullet \to C^\bullet \to A^\bullet[1]$ in $D^b\mathrm{Coh}(X)$ induces a long exact sequence in cohomology (see~\cite[IV.1.6]{Gelfand-Manin})
$$
\ldots\to H^i(A^\bullet)\to H^i(B^\bullet)\to H^i(C^\bullet)\to H^{i+1}(A^\bullet)\to \ldots.
$$
 This, together with \eqref{eq:direct-sum-of-cohomology-objects}, yields a well defined group homomorphism
$$
h: K_0(D^b\mathrm{Coh}(X))\to K_0(X);\quad [E^\bullet]\mapsto \sum_{i\in\Z} (-1)^i[H^i(E^\bullet)].
$$
The inclusion by $\iota:\mathrm{Coh}(X)\hookrightarrow D^b\mathrm{Coh}(X)$ induces a homomorphism
$$
k : K_0(X) \to K_0(D^b\mathrm{Coh}(X)); \quad [E]\mapsto [\iota(E)].
$$
We clearly have $h\circ k = \id_{K_0(X)}$.
By \eqref{eq:truncation-triangles} it also follows that $k\circ h = \id_{K_0(D^b\mathrm{Coh}(X))}$.
Therefore we have shown that we have an isomorphism
$$
K_0(D^b\mathrm{Coh}(X))\cong K_0(X).
$$
Recall from \eqref{eq:rk-det-iso-Pic} that there is an isomorphism $K_0(X)\cong\mathrm{Pic}(X)\oplus \Z$. 
In addition we have a short exact sequence $0\to \mathrm{Pic}^0(X)\to \mathrm{Pic}(X) \overset{\mathrm{deg}}{\to} \Z\to 0$ (see~\cite[Prop.~2.6]{Silverman-Advanced})
and an isomorphism $\mathrm{Pic}^0(X)\cong X$ (see~\cite[III.~Prop.3.4]{Silverman-AEC}).
Recalling from \eqref{eq:analytic-Tate} that $X\cong S^1\Lambda \times \R/\Z$, and putting everything together, we conclude that
\begin{align*}
  \Om_{Lag}(T^2)\cong K_0(X) \cong \mathrm{Pic}(X)\oplus \Z \cong (S^1\Lambda\times \R/\Z)\oplus \Z^2
\end{align*}
holds.
\end{proof}

\begin{rem}
  Note that $K_0(D^b\mathrm{Coh}(X))\cong K_0(X)$ also follows from the general fact that $K_0(\D)\cong K_0(\A)$, whenever $\A$ is the heart of a bounded $t$-structure on a triangulated category $\D$ (see \cite[Sect.~2.2]{huybrechts-introSC}).
\end{rem}

\section{Proof of Theorem~\ref{thm:(Z,P)-is-SC}}\label{Sec:proof_of_main_result}

With our preparations in place we may now proceed with the proof of Theorem~\ref{thm:(Z,P)-is-SC}.
We will show that the central charge $Z:K_0(D\fuk(\C\times M)\to \C$ as in \eqref{eq:def_central_charge} is a well defined homomorphism and that $(Z,\P)$ satisfies all the axioms of Definition~\ref{df:SC} and is locally-finite.

\begin{proof}[Proof of Theorem~\ref{thm:(Z,P)-is-SC}]
  
First, we will show that $Z$ is a well-defined group homomorphism.

Suppose that $V\to W \to U$ is an exact triangle in $D\fuk(\C\times M)$ and hence $[V]_{K_0}-[W]_{K_0}+[U]_{K_0}=0$ in $K_0(D\fuk(\C\times M))$.
We need to check that $$Z[V]_{K_0}-Z[W]_{K_0}+Z[U]_{K_0}=0$$ is satisfied.\par
Recall that for each $j\in\mathbb{Z}_{\geq 1}$ we have a triangulated restriction functor $$\Rc_j:D\fuk(\C\times M)\to D\fuk(M).$$
Therefore, for each $j\in\mathbb{Z}_{\geq 1}$, we get the relation $[\Rc_jV]_{K_0}-[\Rc_jW]_{K_0}+[\Rc_jU]_{K_0}=0$ in $K_0(D\fuk(M))$ which implies the following:
\begin{align*}
  Z[V]_{K_0}-Z[W]_{K_0}+Z[U]_{K_0} &=\sum_{j\geq 2}Z^M[\Rc_jV]_{K_0}-Z^M[\Rc_jW]_{K_0}+Z^M[\Rc_jU]_{K_0}\\
                 &=\sum_{j\geq 2}Z^M([\Rc_jV]_{K_0}-[\Rc_jW]_{K_0}+[\Rc_jU]_{K_0})\\
                 &=0.
\end{align*}
Therefore the central charge $Z$ is a well-defined homomorphism.

It remains to check that $(Z,\P)$ satisfies \ref{axiom-1}-\ref{axiom-4} in Definition \ref{df:SC}.

\begin{enumerate}
\item[\ref{axiom-1}]
  Let $\Ic^{h}X[r]\in\P(\phi)$ with $X\in \P^M(\phi-r+\kappa h)$.
  Then, by Remark \ref{rem:restriction-inclusion-relation}
  \begin{align*}
    Z[\Ic^{h}X[r]]_{K_0}&=Z^M[\Rc_{h}\Ic^{h}X[r]]_{K_0} \\
                                       &= Z^M[X[r]]_{K_0}\\
                                       &=\underbrace{m(X)}_{>0}\exp (i\pi(\phi+\kappa h))\\
                                       &=m(X)\exp (i\pi\phi),
  \end{align*}
since $\kappa$ is even and  $X[r]\in \P^M(\phi-r+\kappa h)[r]=\P^M(\phi+\kappa h)$.
\item[\ref{axiom-2}]
  \begin{align*}
    \P(\phi)[1]&=\left\langle\{ \Ic^{h}X[r+1] \, |\, r\in \Z, \, h\in\Z_{>1},\, X\in\P^M(\phi-r+\kappa h) \}\right\rangle \\
               &=\left\langle \{ \Ic^{h}X [r]\, |\, r\in \Z, \, h\in\Z_{>1},\, X\in\P^M(\phi+1-r+\kappa h) \}\right\rangle \\
               &= \P(\phi+1).
  \end{align*}
  Here, the brackets $\langle\ldots\rangle$ signify that we take the additive closure in $D\fuk(\C\times M)$.

\item[\ref{axiom-3}]

  In order to show axiom \ref{axiom-3}, we will prove the following lemma.
  \begin{lem}\label{lem:hom-inclusion}
    Given $h,h'\in\Z$ and any two objects $X,X'$ of $D\fuk(M)$ we have
    $$
    \Hom_{D\fuk(\C\times M)}(\Ic^{{h'}}X',\Ic^{h}X)\cong \begin{cases} \Hom_{D\fuk(M)}(X',X), &\text{if } h'\geq h\\
    0, &\text{if } h'<h,\end{cases}
    $$
    where the isomorphism is induced by the restriction $\Rc_1$.
  \end{lem}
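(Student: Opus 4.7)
The plan is to prove the lemma first for $X=\Y(L,\theta)$ and $X'=\Y(L',\theta')$ coming from honest Lagrangian branes, using a direct Floer-theoretic computation, and then extend to arbitrary $X',X\in D\fuk(M)$ by a standard triangulated argument. For such geometric objects we have $\Ic^{h'}X'\cong\Y(\gamma_{h'}\times L')$ and $\Ic^h X\cong\Y(\gamma_h\times L)$, and the left hand side of the statement is computed by the Floer cohomology $HF^*(\gamma_{h'}\times L',\gamma_h\times L)$ of two product Lagrangians inside the split-type region of $\C\times M$.

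The first step is an inspection of the intersection geometry of $\gamma_{h'}$ with a slightly downward perturbation $\gamma_h^{\mathrm{pert}}$ of $\gamma_h$, using the perturbation conventions fixed in Section~\ref{sec:Lagrangian_cobordisms}. Their horizontal ends sit at the four distinct heights $1,h'$ and $1-\epsilon,h-\epsilon$, and a case analysis on the bending regions (consistent with the degree computation of Remark~\ref{rem:curve-grading-increase}) shows that for $h'\geq h$ the curves meet transversely in a unique point $z_0\in\C$, whereas for $h'<h$ they are disjoint. When $h'<h$, the Floer complex $CF^*(\gamma_{h'}\times L',\gamma_h\times L)=0$ immediately gives the vanishing statement. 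When $h'\geq h$, since the symplectic form and the almost complex structure on $\C\times M$ are of split type outside the compact region $K\times M$, every $J$-holomorphic Floer strip contributing to $\mu^1$ projects to a $J$-holomorphic strip in $\C$ with boundary on $\gamma_{h'}\cup\gamma_h^{\mathrm{pert}}$ whose two asymptotes both equal $z_0$; by the open mapping principle (or a direct area computation) the $\C$-projection must be constant. Hence $CF^*(\gamma_{h'}\times L',\gamma_h\times L)\cong CF^*(L',L)$ as chain complexes, and passing to cohomology yields $HF^*(\gamma_{h'}\times L',\gamma_h\times L)\cong HF^*(L',L)\cong \Hom_{D\fuk(M)}(X',X)$, naturally in $L$ and $L'$.

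To extend to arbitrary objects, one observes that composing $\Rc_1$ with the canonical identification $\Rc_1\circ\Ic^{j}\cong[-1]$ of Remark~\ref{rem:restriction-inclusion-relation} defines, for every pair $X',X\in D\fuk(M)$, a natural map
\[
\Hom_{D\fuk(\C\times M)}(\Ic^{h'}X',\Ic^h X)\To \Hom_{D\fuk(M)}(X',X).
\]
Both its source and its target are cohomological functors in each argument, because $\Ic^{h'}$, $\Ic^h$, $\Rc_1$ and the shift are all triangulated. By the Floer computation above the map is an isomorphism (respectively, both sides vanish) on the class of Yoneda-modules of honest Lagrangian branes, and tracing through the pullback description of $\Rc_1$ in Section~\ref{sec:incl-restr-functors} shows that the Floer-theoretic identification just produced coincides with the one induced by $\Rc_1$. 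Since every object of $D\fuk(M)$ is isomorphic to an iterated cone over geometric objects, an iterated application of the five-lemma to such cone decompositions --- first varying $X'$ while $X$ is geometric, and then varying $X$ --- propagates the isomorphism, respectively the vanishing, to all pairs $X',X$.

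The principal obstacle is the Floer-theoretic step: rigorously establishing the constancy of the $\C$-projection of holomorphic strips in the presence of Hamiltonian perturbations, and then verifying that the resulting splitting of the Floer complex matches the natural transformation induced by $\Rc_1$, so that the triangulated extension argument is legitimate. The analogous computations in \cite{BC-Lef} and \cite[Section~4]{Haug-T^2}, combined with the split structure of the symplectic data outside $K\times M$, make this tractable, but some care is needed to keep track of the graded and cohomological conventions.
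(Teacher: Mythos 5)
Your proposal follows essentially the same two-step route as the paper: first the geometric case via the intersection pattern of $\gamma_{h'}$ with the downward-perturbed $\gamma_h$ (disjoint projections for $h'<h$, a single intersection point and the degree identification of Remark~\ref{rem:curve-grading-increase} for $h'\geq h$), and then propagation to all of $D\fuk(M)$ by applying $\Rc_1$, cohomological functors and the five-lemma to iterated cone decompositions over geometric objects. The only difference is one of emphasis --- you spell out the constancy of the $\C$-projection of the strips while the paper instead writes out the degree computation in full --- so the argument is correct and matches the paper's proof.
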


Before proving Lemma~\ref{lem:hom-inclusion}, we will deduce \ref{axiom-3} from it.\par
Suppose that $\phi'>\phi$, $\Ic^{{h'}}X'[r']\in\P(\phi')$ and $\Ic^{h}X[r]\in\P(\phi)$.

\begin{enumerate}
  \item If $h'<h$, then  $\Hom_{D\fuk(\C\times M)}(\Ic^{{h'}}X'[r'],\Ic^{h}X[r])=0$ by Lemma~\ref{lem:hom-inclusion}.
  \item If $h'\geq h$, then
    $$
    \Hom_{D\fuk(\C\times M)}(\Ic^{{h'}}X'[r'],\Ic^{h}X[r])\cong  \Hom_{D\fuk(M)}(X'[r'],X[r])=0
    $$
    since $(Z^M,\P^M)$ satisfies \ref{axiom-3} and  $X'[r']\in\P^M(\phi'+\kappa h')$, $X[r]\in\P^M(\phi+\kappa h)$ where $\phi'+\kappa h'>\phi+\kappa h$.
\end{enumerate}
Thus $(Z,\P)$ satisfies \ref{axiom-3} as well.

  \begin{proof}[Proof of Lemma~\ref{lem:hom-inclusion}]
    \textbf{Step 1:} Suppose that $X$ and $X'$ are geometric objects, i.e. $X\cong L[r]$ and $X'\cong L'[r']$ where $L, L'\in \fuk(M)$ are graded Lagrangians in $M$.

  In the setup of the Floer complex $CF(V,W)$ of two cobordisms, the perturbation datum is chosen in such a way that the horizontal ends of the \emph{second} cobordism $V$ will be slightly perturbed downwards, i.e. in the $-i$ direction in $\C$. In particular, this means that if $h'<h$, we get that $HF(\Ic^{{h'}}L'[r'],\Ic^{{h}}L[r])=0$ since after a Hamiltonian isotopy that preserves horizontal ends, the projections of $\Ic^{{h'}}L'$ and $\Ic^{{h}}L$ to $\C$ will be disjoint. Therefore, in this case there are no non-trivial morphisms, i.e. $\Hom_{D\fuk(\C\times M)}(\Ic^{{h'}}X',\Ic^{{h}}X)=0$.\par
  Suppose now that $h'\geq h$.
  In this case, instead of having disjoint projections to $\C$, we can arrange that after suitable Hamiltonian perturbation, the projections of $\Ic^{{h'}}L'$ and $\Ic^{{h}}L$ to $\C$ intersect in exactly one point $z\in\C$ (see Figure~\ref{fig: Lemma_A3}).

	\begin{figure}[ht]
\def\svgwidth{0.4\columnwidth}
\centering

                         \includegraphics[width=0.4\columnwidth]{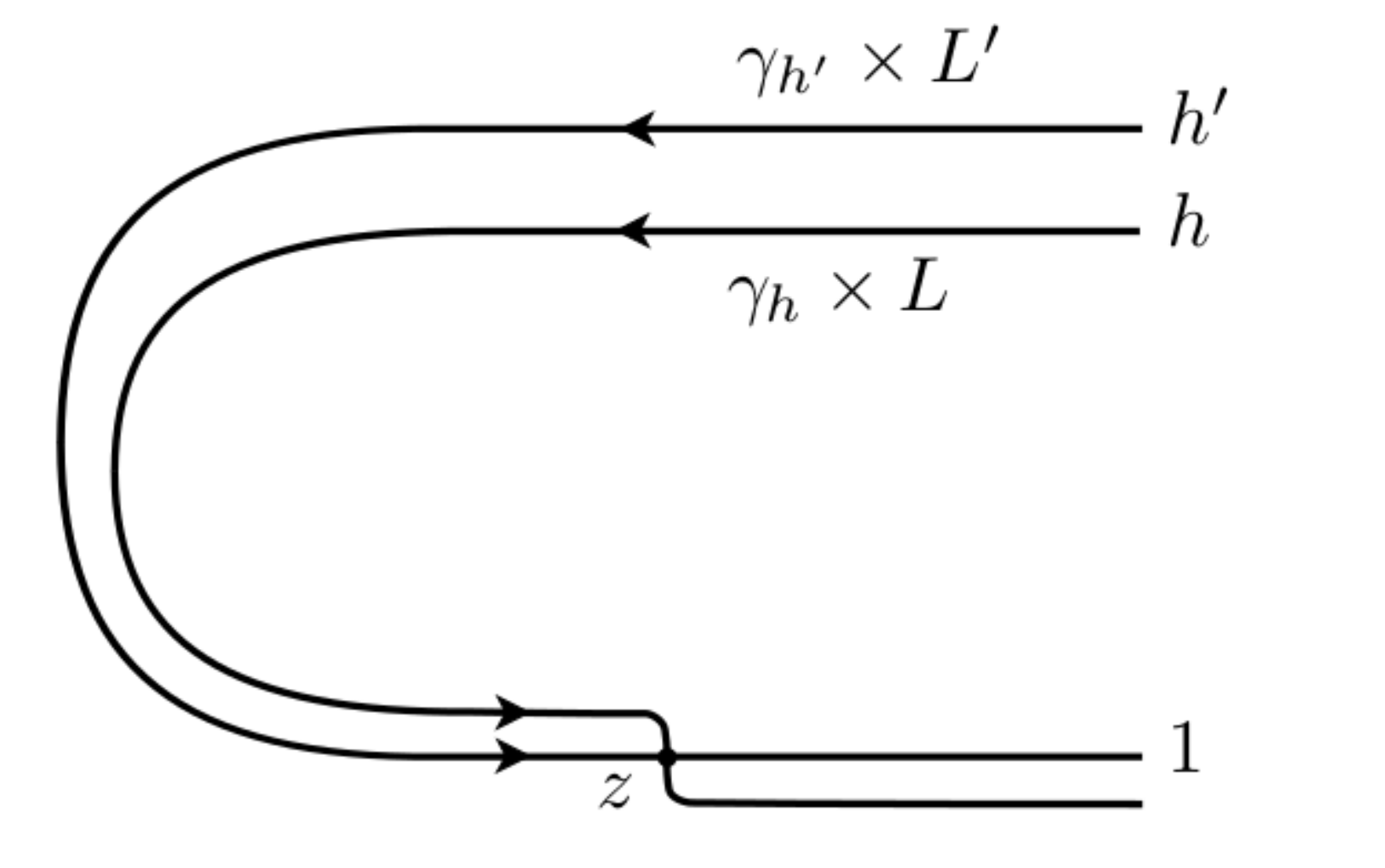}
			\caption{The projections of $\Ic^{{h}}L$ after a slight downward perturbation and $\Ic^{{h'}}L'$ to $\C$, in the case $h'\geq h$.}
			\label{fig: Lemma_A3}
\end{figure}

  Thus, all the intersection points of $\Ic^{{h'}}L'\pitchfork \Ic^{{h}}L$ are of the form $\xi=(z,p)\in\C \times M$ for some $p\in L'\pitchfork L$.
  Using the observation of Remark \ref{rem:curve-grading-increase}, and denoting by $\phi'_1,\ldots ,\phi'_n$ the K\"ahler angles between $L'$ and $L$, we calculate the degree of $\xi$:

  \begin{align*}
   &\deg_\xi(\Ic^{{h'}}L'[r'],\Ic^{{h}}L[r])\\  & = (n+1) + \left(\theta_{L}-r+\frac12\right) - (\theta_{L'}-r' + 1) -\frac{1}{\pi}\left(\frac{\pi}{2} + \sum_{i=1}^n\phi'_i\right)\\
                      & = n +\theta_{L} - \theta_{L'} + r' - r - \frac{1}{\pi}\sum_{i=1}^n\phi'_i\\
                      & = \underbrace{\deg_z(\gamma_{h'},\gamma_{h})}_{=0} + \deg_p((L',\theta_{L'})[r'],(L,\theta_{L})[r])\\
                      & = \deg_p((L',\theta_{L'})[r'],(L,\theta_{L})[r]).
  \end{align*}

Hence, we get $\Hom_{D\fuk(\C\times M)}(\Ic^{{h'}}X',\Ic^{{h}}X)\cong \Hom_{D\fuk(M)}(L'[r'],L[r]).$\vspace{2pt}

\textbf{Step 2:}
 Let $\mathcal{Q}$ be the collection of pairs of objects of $D\fuk(M)$ which satisfy the conclusion of the lemma.
 Note that by Step 1 every pair of geometric objects lies in $\mathcal{Q}$.\par
 Suppose now that $(X',X)\in\mathcal{Q}$ and $(X',Y)\in \mathcal{Q}$ and let $C$ be an object in the isomorphism class of the cone $(X\overset{\varphi}\to Y)$ over some morphism $\varphi\in\Hom_{D\fuk(M)}(X,Y)$.
 We want to show that the pair $(X',C)$ lies in $\mathcal{Q}$ as well.\par
 Recall that the restriction functor at height 1 is a triangulated functor $\Rc_1:D\fuk(\C\times M) \to D\fuk(M)$ which satisfies $\Rc_1\Ic^{h}\M = \M[-1]$ for $\M\in D\fuk(M)$.
 We have the following commutative (up to signs) diagram of exact triangles:
$$ \xymatrix{
   \ldots\ar[r] & \Ic^{h}X \ar[r]^{\Ic^{h}\varphi} \ar[d]^{\Rc_1} &\Ic^{h}Y \ar[r]\ar[d]^{\Rc_1} &\Ic^{h}C \ar[r]\ar[d]^{\Rc_1} & \Ic^{h}X[1]\ar[r]\ar[d]^{\Rc_1} &\ldots  \\
   \ldots\ar[r] & X[-1] \ar[r]^{-\varphi[-1]} & Y[-1]  \ar[r] & C[-1] \ar[r]  & X \ar[r]& \ldots 
 }$$
 
 Given any object $\wt{X'}\in D\fuk(\C\times M)$ we apply the cohomological functor $\Hom_{D\fuk(\C\times M)}(\wt{X'},\,\cdot\,)$ (see \cite[Chap.~2~Prop.~1.2.1]{Verdier96-PhD}) to the above diagram in order to obtain a commutative (up to signs) diagram of long exact sequences.
 \begin{equation*}
   \resizebox{.9\hsize}{!}{ \xymatrix@C=0.9em{
\ar[r]& \Hom(\wt{X'},\Ic^{h}X) \ar[r]^{} \ar[d]^{\Rc_1} &\Hom(\wt{X'},\Ic^{h}Y) \ar[r]\ar[d]^{\Rc_1} & \Hom(\wt{X'},\Ic^{h}C) \ar[r]\ar[d]^{\Rc_1} & \Hom(\wt{X'},\Ic^{h}X[1])\ar[r]\ar[d]^{\Rc_1}& \\
\ar[r]& \Hom(\Rc_1\wt{X'},X[-1]) \ar[r]^{} & \Hom(\Rc_1\wt{X'},Y[-1])  \ar[r] & \Hom(\Rc_1\wt{X'},C[-1]) \ar[r]  & \Hom(\Rc_1\wt{X'},X) \ar[r] &
 }}
 \end{equation*}

 Specializing to $\wt{X'}:=\Ic^{{h'}}X'$ yields
    \begin{equation*}
      \resizebox{.9\hsize}{!}{\xymatrix@C=0.9em{
\ar[r]& \Hom(\Ic^{{h'}}X',\Ic^{h}X) \ar[r]
 \ar[d]^{\Rc_1} &\Hom(\Ic^{{h'}}X',\Ic^{h}Y) \ar[r]\ar[d]^{\Rc_1} & \Hom(\Ic^{{h'}}X',\Ic^{h}C) \ar[r]\ar[d]^{\Rc_1} & \Hom(\Ic^{{h'}}X',\Ic^{h}X[1])\ar[r]\ar[d]^{\Rc_1}& \\
\ar[r]& \Hom(X'[-1],X[-1]) \ar[r]
 & \Hom(X'[-1],Y[-1])  \ar[r] & \Hom(X'[-1],C[-1]) \ar[r]  & \Hom(X'[-1],X) \ar[r] &
 } }
\end{equation*}

\begin{enumerate}
\item If $h'<h$, then the top long exact sequence takes the form
  
\begin{equation*}
  \ldots 0 \To 0 \To \Hom(\Ic^{{h'}}X',\Ic^{h}C)\to 0 \To\ldots
\end{equation*}

 and hence $\Hom(\Ic^{{h'}}X',\Ic^{h}C) = 0$.

 \item If $h'\geq h$, then 
\begin{equation*}
  \mkern-20mu\resizebox{.9\hsize}{!}{\xymatrix@C=0.9em{
\ar[r]& \Hom(\Ic^{{h'}}X',\Ic^{h}X) \ar[r]
 \ar[d]^{\Rc_1}_{\cong} &\Hom(\Ic^{{h'}}X',\Ic^{h}Y) \ar[r]\ar[d]^{\Rc_1}_{\cong} & \Hom(\Ic^{{h'}}X',\Ic^{h}C) \ar[r]\ar[d]^{\Rc_1} & \Hom(\Ic^{{h'}}X',\Ic^{h}X[1])\ar[r]\ar[d]^{\Rc_1}_{\cong}& \\
\ar[r]& \underset{\cong\Hom(X',X)}{ \Hom(X'[-1],X[-1])} \ar[r]
 & \underset{\cong\Hom(X',Y)}{\Hom(X'[-1],Y[-1])}  \ar[r] & \underset{\cong\Hom(X',C)}{\Hom(X'[-1],C[-1])} \ar[r]  & \underset{\cong\Hom(X',X)[1]}{\Hom(X'[-1],X)} \ar[r] &
 } }
\end{equation*}

and by the $5$-Lemma we get $\Hom(\Ic^{{h'}}X',\Ic^{h}C)\cong\Hom(X',C) $.
\end{enumerate}
Therefore the pair $(X',C)$ lies in $\mathcal{Q}$ as well.\par

With a similar argument, using instead the cohomological functor 
$$
\Hom_{D\fuk(\C\times M)}(\,\cdot\, ,\wt{X}),
$$
one can show that if $(X',X)\in\mathcal{Q}$ and $(Y',X)\in\mathcal{Q}$, then $(C',X)\in\mathcal{Q}$ as well, where $C':=(X'\overset{\varphi'}{\to}Y')$.\par
To conclude the proof of the lemma we note that any object of $D\fuk(M)$ is isomorphic to an iterated cone over geometric objects, and hence the lemma holds for every pair of objects.
  \end{proof}

\end{enumerate}

\begin{enumerate}[resume]
\item[\ref{axiom-4}]
\textbf{Step 1:} We will first check \ref{axiom-4} of Definition \ref{df:SC} for Yoneda-modules $(V,\theta_V)\in D\fuk(\C \times M)$.\par
Let $(V,\theta_V):(L_1,\ldots,L_s)\leadsto\emptyset$ be an object of $\fuk(\C\times M)$.
Recall the iterated cone decomposition of $(V,\theta_V)$ from Proposition~\ref{Prop:conedecomp} in its particular form \eqref{diag:cone-decomp-1}
\begin{equation*}
\xymatrix@R16pt@C8pt{
        0=\M_1 \ar[rr]  & & \M_2 \ar[dl]\ar[rr]  & & \M_3 \ar[dl]\ar[r] & \cdots  \ar[r] & \M_{s-1} \ar[rr]  & & \M_s\cong V \ar[dl]\\
        & \widetilde{L}_2\ar@{-->}^{[1]}[ul] & & \widetilde{L}_{3}\ar@{-->}^{[1]}[ul] &&& & \widetilde{L}_s\ar@{-->}^{[1]}[ul]
      }
\end{equation*}

For $2\leq j\leq s$ we write the HN-filtration of $\Rc_{h_j}\wt{L}_j \cong \Rc_{h_j}\Ic^{{h_j}}(L_j,\theta_{V,j}) \cong (L_j,\theta_{V,j})$
in $D\fuk(M)$ as
\begin{equation}\label{eq:HN_filtration-of-restriction}
  \Rc_{h_j}\wt{L}_j \cong ( X_{j,\ell(j)}[-1]\to \ldots \to X_{j,1}[-1]\to 0 )
\end{equation}
where $X_{j,k}\in \P^M(\phi^M_{j,k})$ is semistable of phase $\phi^M_{j,k}$ for each $1\leq k \leq \ell(j)$.
Note that if $\wt{L}_j$ is semistable to begin with, then $\ell(j)=1$ and $X_{j,1}\cong (L_j,\theta_{V,j})$.
By applying the inclusion functor $\Ic^{{h_j}}$ to the HN-filtration of $\Rc_{h_j}\wt{L}_j$ we obtain the following iterated cone decomposition of $\wt{L}_j$:
\begin{equation}\label{eq:semistable-cone-refinement-1}
  \wt{L}_j
           \cong ( \wt{X}_{j,\ell(j)}[-1]\to \ldots \to \wt{X}_{j,1}[-1]\to 0 ),
\end{equation}
where $\wt{X}_{j,k} := \Ic^{{h_j}} X_{j,k}$ for $1\leq k \leq \ell(j)$ and $\wt{X}_{j,k}$ is of \emph{height} $h_j$.
As $X_{j,k}\in \P^M(\phi^M_{j,k})$, it follows from \eqref{eq:inclusion-phase-relation} that $\wt{X}_{j,k}$ is semistable of phase
$$
\phi_{j,k}:=\phi(\wt{X}_{j,k}) = \phi^M_{j,k}-\kappa h_j.
$$
By Remark~\ref{rem:nested-cones} and \eqref{eq:semistable-cone-refinement-1} the cone decomposition \eqref{eq:cone-decomp-wt(L)} admits the refinement
\begin{equation}
  \label{eq:semistable-cone-refinement-2}
  V \cong (\wt{X}_{s,\ell(s)}[-1]\to\ldots\to \wt{X}_{3,1}[-1]\to \wt{X}_{2,\ell(2)}[-1]\to\ldots \to \wt{X}_{2,1}[-1]\to 0)
\end{equation}
In an effort to render the notation more transparent we will relabel the factors of \eqref{eq:semistable-cone-refinement-2} as $\wt{X}_q := \wt{X}_{j,k}$ (resp.~ $X_q:=X_{j,k}$) where $2\leq j\leq s$, $1\leq k\leq \ell(j)$ and 
$q:=k+ \sum_{i=2}^{j-1}\ell(i)$.
We also relabel the phases in the same fashion.
After this relabeling \eqref{eq:semistable-cone-refinement-2} takes the form
\begin{equation}\label{eq:semistable-cone-refinement-3}
  V \cong (\wt{X}_{s'}[-1]\to\ldots\to \wt{X}_1[-1]\to 0)
\end{equation}
where $s'=\sum_{i=2}^s\ell(i)$,
\begin{equation}\label{eq:equal-phases}
  X_j\in \P^M(\phi^M_j),\quad \wt{X}_j\in \P(\phi_j),\quad \phi_j=\phi^M_j-\kappa \wt{h}_j
\end{equation}
and $\wt{h}_j$ denotes the \emph{height} of $\wt{X}_j$.
Notice also that if $\sum_{i=2}^{j-1}\ell(i) < p,q \leq \sum_{i=2}^j\ell(i)$, then $\wt{X}_p$ and $\wt{X}_q$ are on the \emph{same} height 
$$
\wt{h}_p=\wt{h}_q=h_j
$$
for $2\leq j\leq s$.
Conversely, if $\wt{X}_p$ respectively $\wt{X}_q$ are semistable objects occurring in the iterated cone decomposition of $\wt{L}_j$ respectively $\wt{L}_i$, with $i\neq j$, then their heights are different, i.e.~$\wt{h}_p=h_j\neq h_i=\wt{h}_q$.\par

We need to check that the phases in the iterated cone decomposition are descending, i.e. that $\phi_2>\ldots >\phi_{s'}$ is satisfied.
By \eqref{eq:equal-phases} this amounts to checking that $\phi^M_2-\kappa \wt{h}_2>\ldots >\phi^M_{s'}-\kappa \wt{h}_{s'}$ holds.\par
 
First note that if $\wt{h}_j=\wt{h}_{j+1}$, that is if we are within the cone decomposition \eqref{eq:semistable-cone-refinement-1}, then we know by the HN-filtration \eqref{eq:HN_filtration-of-restriction} that the inequality $\phi^M_j> \phi^M_{j+1}$ holds and hence the desired inequality $\phi_j> \phi_{j+1}$ follows in these cases.\par
It remains to check the cases where $\wt{h}_j<\wt{h}_{j+1}$ holds.
In this case we are at the passage from $\wt{L}_{j+1}$ to $\wt{L}_j$ in the cone decomposition.

By \eqref{eq: cone-decomp in M}, the triangulated restriction functor $\Rc_1$ applied to \eqref{eq:semistable-cone-refinement-3} yields the following iterated cone in $D\fuk(M)$:
\begin{equation}\label{diag:cone-decomp-2-in-M}
  (L_1,\theta_{V,1})\cong (X_{s'}[-2]\to\ldots\to X_1[-2]\to 0).
\end{equation}

\textbf{Now, let $1\leq j\leq s'-1$ be an index such that $\wt{h}_j<\wt{h}_{j+1}$.}\par

Using Remark~\ref{rem:shifting-iterated-cones} we may reorder the brackets in \eqref{eq:semistable-cone-refinement-3} and \eqref{diag:cone-decomp-2-in-M} as follows
\begin{align}
  &(\wt{X}_{s'}[-1]\to\ldots\to(\wt{X}_{j+1}[-2]\overset{\wt{\alpha}_j}{\to}\wt{X}_j[-1])\to\ldots\to 0)\quad\text{and}\label{diag:cone-decomp-3}\\
  &(X_{s'}[-2]\to\ldots\to (X_{j+1}[-3]\overset{\alpha_j}{\to} X_j[-2])\to\ldots\to 0),\label{diag:cone-decomp-3-in-M}
\end{align}
where $\alpha_j:=\Rc_1\wt{\alpha}_j$.
There are two cases to examine.\par\bigskip
 \textbf{Case 1:} 
    If $\wt{\alpha}_j$ is not the zero-morphism.\par\bigskip
                          By Lemma~\ref{lem:hom-inclusion} we obtain
                          \begin{align*}
                            &\Hom_{D\fuk(\C\times M)}(\wt{X}_{j+1}[-2],\wt{X}_j[-1])\\
                            \cong& \Hom_{D\fuk(M)}(\Rc_1\wt{X}_{j+1}[-2],\Rc_1\wt{X}_j[-1])\\
                            \cong& \Hom_{D\fuk(M)}(X_{j+1}[-3], X_j[-2]).
                          \end{align*}
                          Under this identification, $\wt{\alpha}_j$ corresponds to $\alpha_j$.
                          Therefore, if $\wt{\alpha}_j\neq 0$, then $\alpha_j\neq 0$ as well.
                          Thus, by Definition \ref{df:SC} \ref{axiom-3} and the decomposition \eqref{diag:cone-decomp-3-in-M}, we have the phase inequality 
                          $$
                          \phi^M_{j+1}-1 \leq \phi^M_j
                          $$
                          which, since $\kappa > 2$ and $\wt{h}_j<\wt{h}_{j+1}$, implies that 
                          \begin{align}\label{eq:height-difference-phase-difference-bigger-than-1}
                          \mkern-10mu\phi_{j+1}=\phi^M_{j+1}-\kappa \wt{h}_{j+1} \leq \phi^M_j+1-\kappa \wt{h}_{j+1} < \phi^M_j -\kappa \wt{h}_j-1 =\phi_j-1 <\phi_j
                          \end{align}
                          is indeed satisfied.\par\bigskip
    
 \textbf{Case 2:} 
    Suppose that the morphism $\wt{\alpha}_j$ is the zero-morphism and that the phase inequality is unfavorable, i.e. $\phi_j\leq\phi_{j+1}$.\par\bigskip
    \begin{enumerate}[(i)]
      \item \label{A4-Step-1-Case-2-i}
        If $\phi_j=\phi_{j+1}$:\par
        In this case we have that $\mathrm{cone}(\wt{\alpha}_j)\cong \wt{X}_{j+1}[-1]\oplus \wt{X}_j[-1]\in\P(\phi_j-1)$ is itself semistable of phase $\phi_j-1$ since by definition $\P(\phi_j-1)$ is \emph{additive}.
        So we may rewrite \eqref{eq:semistable-cone-refinement-3} as
        \begin{align*}
          V \cong (\wt{X}_{s'}[-1]\to\ldots\to\wt{X}_{j+1}[-1]\to\mathrm{cone}(\wt{\alpha}_j)\to \wt{X}_{j-1}[-1]&\to\ldots\\ \ldots\to\wt{X}_1[-1]&\to 0).
        \end{align*}

      \item \label{A4-Step-1-Case-2-ii}
                  If $\phi_j<\phi_{j+1}$:\par
                          In this case, by Remark \ref{rem:cone-direct-sum}, we have
                          $$
                          (\wt{X}_{j+1}[-2]\overset{0}{\to} \wt{X}_j[-1]) \cong (\wt{X}_j[-2]\overset{0}{\to} \wt{X}_{j+1}[-1])
                          $$
                          and so we may switch these two objects in the iterated cone to get the desired phase inequality
                      \begin{align*}
                        &(\wt{X}_s[-1]\to\ldots(\wt{X}_{j+1}[-2]\overset{\wt{\alpha}_j=0}{\To}\wt{X}_j[-1])\to\ldots\to 0)\\
                       \cong & (\wt{X}_{s'}[-1]\to\ldots(\wt{X}_j[-2]\overset{0}{\to}\wt{X}_{j+1}[-1])\to\ldots\to 0)\\
                        \cong &(\wt{X}_{s'}[-1]\to\ldots \wt{X}_j[-1]\to\wt{X}_{j+1}[-1]\to\ldots\to\wt{X}_2[-1]\to 0).
                      \end{align*}
                      Switching these objects may create problems with the phase inequalities of the adjacent objects, so we iteratively repeat this switch until we end up with
                      \begin{align}\label{A4-Step-1-Case-2-ii-switched}
                        V\cong (\wt{X}_{s'}[-1]&\to\ldots\to \wt{X}_{k+1}[-1]\to\wt{X}_j[-1]\to\wt{X}_k[-1]\to\ldots \\
                        \ldots&\to\wt{X}_r[-1]\to\wt{X}_{j+1}[-1]\to\wt{X}_{r-1}[-1]\to\ldots\to\wt{X}_1[-1]\to 0),\nonumber
                      \end{align}
                      where $j+1\leq k\leq s'$ and $1\leq r\leq j$ are such that
                      \begin{align*}
                       & \phi_k > \phi_j, \quad \phi_{k+1}\leq \phi_j\\
                      \text{and}\quad & \phi_r<\phi_{j+1}, \quad \phi_{r-1}\geq \phi_{j+1}.
                      \end{align*}
\end{enumerate}
We now iteratively repeat the argument of Case 2~\ref{A4-Step-1-Case-2-i} and \ref{A4-Step-1-Case-2-ii} for \eqref{A4-Step-1-Case-2-ii-switched} until, after finitely many steps, we end up with the desired HN-filtration of $V$.
This shows that Yoneda-modules $(V,\theta_V)\in D\fuk(\C \times M)$ satisfy \ref{axiom-4} of Definition \ref{df:SC}.\\ \par

\textbf{Step 2:} It remains to check \ref{axiom-4} for general modules.\par

 We will first check that any cone $W:=((U,\theta_U)\to (V,\theta_V))$ over two Yoneda-modules $(U,\theta_U),(V,\theta_V)\in D\fuk(\C\times M )$ satisfies \ref{axiom-4}.

By Step 1, we know that both $U$ and $V$ admit a HN-filtration
\begin{equation}\label{diag:cone-decomp-step2-U}
\xymatrix@R16pt@C8pt{
        0=\M^U_1 \ar[rr]  & & \M^U_2 \ar[dl]\ar[rr]  & & \M^U_3 \ar[dl]\ar[r] & \cdots  \ar[r] & \M^U_{r-1} \ar[rr]  & & \M^U_r\cong U, \ar[dl]\\
        & \wt{X}_1\ar@{-->}^{[1]}[ul] & & \wt{X}_{3}\ar@{-->}^{[1]}[ul] &&& & \wt{X}_r\ar@{-->}^{[1]}[ul]
      }
\end{equation}
with $\phi^U_1 > \ldots > \phi^U_r$, where $\phi^U_j:=\phi(\wt{X}_j)$, and 
\begin{equation}\label{diag:cone-decomp-step2-V}
\xymatrix@R16pt@C8pt{
        0=\M^V_1 \ar[rr]  & & \M^V_2 \ar[dl]\ar[rr]  & & \M^V_3 \ar[dl]\ar[r] & \cdots  \ar[r] & \M^V_{s-1} \ar[rr]  & & \M^V_s\cong V, \ar[dl]\\
        & \wt{Y}_1\ar@{-->}^{[1]}[ul] & & \wt{Y}_{3}\ar@{-->}^{[1]}[ul] &&& & \wt{Y}_s\ar@{-->}^{[1]}[ul]
      }
\end{equation}
with $\phi^V_1 > \ldots > \phi^V_s$, where $\phi^V_i:=\phi(\wt{Y}_i)$.

We will assume for now, that each semistable factor $\wt{X}_j$ (resp.~$\wt{Y}_i$) is isomorphic to an object in the image of some inclusion functor $\Ic^{h_j}$ as opposed to a direct sum of such object of equal phase (cf.~Step~1, Case~2 \ref{A4-Step-1-Case-2-i}).
Below we will see that this assumption can be made without loss of generality.
With this assumption in place, each semistable factor $\wt{X}_j$ (resp.~$\wt{Y}_i$) has a well defined height which we will denote by $h^U_j$ (resp.~$h^V_i$).
More precisely, from Step 1 we know that $\wt{X}_j \cong \Ic^{h^U_j}X_j$ and $\wt{Y}_i \cong \Ic^{h^V_i}Y_i$, where $X_j\in\P^M(\phi^{M,U}_j)$  and $Y_i\in\P^M(\phi^{M,V}_i)$.
Moreover, as in \eqref{eq:equal-phases}, the phases satisfy the following relations
\begin{equation}\label{eq:phase-relation-A4-step2}
  \phi^U_j = \phi^{M,U}_j-\kappa h^U_j \quad\text{and}\quad \phi^V_i = \phi^{M,V}_i - \kappa h^V_i.
\end{equation}

We may rewrite \eqref{diag:cone-decomp-step2-U} and \eqref{diag:cone-decomp-step2-V} as
\begin{align*}
 U\cong (\wt{X}_r[-1]\to\ldots \wt{X}_1[-1]\to 0) \quad \text{and}\quad V\cong (\wt{Y}_s[-1]\to\ldots \wt{Y}_1[-1]\to 0) 
\end{align*}
and hence
\begin{align}
  W&\cong ((\wt{X}_r[-1]\to\ldots \to\wt{X}_1[-1]\to 0) \to (\wt{Y}_s[-1]\to\ldots\to \wt{Y}_1[-1]\to 0))\nonumber\\
   &\cong (\wt{X}_r\to\ldots \to\wt{X}_1 \to\wt{Y}_s[-1]\to\ldots \to\wt{Y}_1[-1]\to 0)\label{eq:W-long-conedecomp-1}\\
   &\cong (\wt{X}_r\to\ldots \to(\wt{X}_1[-1] \overset{\beta}{\To}\wt{Y}_s[-1])\to\ldots \to\wt{Y}_1[-1]\to 0)\label{eq:(A4)-cone-decomp}
\end{align}
If $\phi^V_s-1>\phi^U_1$ then there is nothing to show.
So we assume that $\phi^V_s-1\leq\phi^U_1$ holds.
Moreover, we will also assume that $\beta$ is not an isomorphism, as otherwise the cone over $\beta$ is the zero-object.
\par\bigskip
\textbf{Case 1:} 
\boldmath$\phi^V_s<\phi^U_1$\unboldmath.\par
  In this case, by \ref{axiom-3}, $\beta=0$ and therefore
$$
(\wt{X}_1[-1] \overset{0}{\To}\wt{Y}_s[-1])\cong (\wt{Y}_s[-2] \overset{0}{\To}\wt{X}_1)
$$
which leads to
  \begin{align*}
  W&\cong (\wt{X}_r\to\ldots \to(\wt{Y}_s[-2] \overset{0}{\To}\wt{X}_1)\to\ldots \to\wt{Y}_1[-1]\to 0)\\
   &\cong (\wt{X}_r\to\ldots \to \wt{Y}_s[-1] \to\wt{X}_1 \to\ldots \to\wt{Y}_1[-1]\to 0)
  \end{align*}
  where $\phi^V_s-1 < \phi^U_1$.
  Let $1\leq j\leq r$ and $1\leq i \leq s$ be such that
  \begin{align*}
   & \phi^V_s< \phi^U_j,\quad \phi^V_s\geq \phi^U_{j+1}\\
    \text{and}\quad & \phi^V_i< \phi^U_1,\quad \phi^V_{i-1}\geq \phi^U_1
  \end{align*}
  are satisfied.
  Then, by the above argument we may rewrite \eqref{eq:W-long-conedecomp-1} as
  \begin{align*}\label{eq:W-conedecomp-shuffle-1}
    W  \cong (\wt{X}_r\to&\ldots \to \wt{X}_{j+1}\to\wt{Y}_s[-1]\to\wt{X}_j\to \ldots \to\wt{X}_2 \to\wt{Y}_{s-1}[-1]\to\ldots\\
    &\ldots\to\wt{Y}_i[-1]\to\wt{X}_1\to\wt{Y}_{i-1}[-1]\to\ldots\to\wt{Y}_1[-1]\to 0).\nonumber
  \end{align*}
If $\phi^V_{s-1}<\phi^U_2$, we repeat the above argument again and again, until, after a finite number of iterations, we arrive at an iterated cone decomposition of $W$ in which the $\wt{X}_j$'s and $\wt{Y}_i$'s have been reshuffled (maintaining their internal order) in such a way that at every occurrence of
\begin{equation}\label{eq:occurrence}
\ldots\to\wt{X}_j\to\wt{Y}_i[-1]\to\ldots  
\end{equation}

 we have $\phi^U_j\leq\phi^V_i$, i.e. Case 1 does not occur anymore.
Thus, at each occurrence \eqref{eq:occurrence} in this reshuffled cone decomposition, the phase inequality is either correct already (i.e. $\phi^U_j<\phi^V_i-1$), in which case we don't do anything, or we have $\phi^U_j-1\leq \phi^V_i-1\leq \phi^U_j$, in which case  we proceed with Case 2 at this point in the iterated cone decomposition.\par
In what follows we only exemplify Case 2 at the occurrence of $\ldots \to\wt{X}_1 \to\wt{Y}_s[-1]\to\ldots$ in the original iterated cone decomposition \eqref{eq:W-long-conedecomp-1}, if however, reshuffling as described above is required, one has to apply Case 2 a finite number of times.
Namely at each occurrence \eqref{eq:occurrence} in the reshuffled iterated cone decomposition where the phase inequality $\phi^U_j-1\leq \phi^V_i-1\leq \phi^U_j$ holds.

\par\bigskip
\textbf{Case 2:} 
 \boldmath$\phi^U_1-1\leq\phi^V_s-1\leq \phi^U_1$.\unboldmath\par
  Notice that if $\beta=0$ and $\phi^V_s-1 < \phi^U_1$ we may go back to the argument of the first case, which we iteratively repeat a finite number of times as in Step 1 Case 2 \ref{A4-Step-1-Case-2-ii}.\\
  If $\beta=0$ and $\phi^V_s-1 = \phi^U_1$ we replace $(\wt{X}_1[-1] \overset{0}{\To}\wt{Y}_s[-1])$ by $\wt{X}_1\oplus \wt{Y}_s[-1]$, which is itself an object in the additive subcategory $\P(\phi^U_1)$, in the iterated cone decomposition of $W$.
  Note that we perform this replacement as a last step, i.e. after \ref{it:case2-step2-(i)}-\ref{it:case2-step2-(iii)} below (cf. the discussion after \ref{it:case2-step2-(iii)} below).
  
  \begin{enumerate}[label={(\roman*)}]
    \item\label{it:case2-step2-(i)} If $h^U_1<h^V_s$, then, as we have seen in Lemma~\ref{lem:hom-inclusion}, $\beta=0$.
          Hence we are in a situation that we have already discussed above.
    \item\label{it:case2-step2-(ii)} Suppose that $\beta \neq 0$ and $h^U_1>h^V_s$.\par
      We apply the restriction functor $\Rc_1$ to the decomposition \eqref{eq:(A4)-cone-decomp}.
      Note that 
      $$
       \Rc_1\wt{X}_1[-1]\cong X_1[-2] \quad \text{and}\quad \Rc_1\wt{Y}_s[-1]\cong Y_s[-2].
      $$
     By Lemma~\ref{lem:hom-inclusion} we obtain the identification
      \begin{align*}
        &\Hom_{D\fuk(\C\times M)}(\wt{X}_1[-1],\wt{Y}_s[-1])\\
        \cong & \Hom_{D\fuk(M)}(\Rc_1\wt{X}_1[-1],\Rc_1\wt{Y}_s[-1])\\
        \cong &\Hom_{D\fuk(M)}(X_1[-2],Y_s[-2])        
      \end{align*}
      under which $\beta$ corresponds to $\Rc_1\beta$.
      The assumption $\beta\neq 0$ thus implies that $\Rc_1\beta\neq 0$ as well.\par
    
      By \eqref{eq:phase-relation-A4-step2} we have $\phi^{M,U}_1:=\phi(X_1) = \phi^U_1+\kappa h^U_1$ and $\phi^{M,V}_s:= \phi(Y_s) = \phi^V_s + \kappa h^V_s$.
      Since $\Rc_1\beta\neq 0$ we get the phase inequality $\phi^{M,U}_1\leq \phi^{M,V}_s$ and hence (since $h^U_1 > h^V_s$ and $\kappa > 2$)
      \begin{align}\label{eq:height-difference-phase-difference-bigger-than-1-2}
       \mkern-10mu\phi^U_1=\phi^{M,U}_1 - \kappa h^U_1 \leq \phi^{M,V}_s - \underbrace{\kappa h^U_1}_{>\kappa h^V_s + 1} < \phi^{M,V}_s - \kappa h^V_s -1 = \phi^V_s - 1
      \end{align}
      which contradicts our assumption $\phi^V_s-1 \leq \phi^U_1$.

     \item\label{it:case2-step2-(iii)}
       Suppose that $\beta \neq 0$ and $h:=h^U_1=h^V_s$.\par
       Recall that we are still assuming that
       \begin{align}\label{eq:step2-case2-1}
         \phi^U_1-1\leq \phi^V_s-1\leq \phi^U_1
       \end{align}
       holds.\par
       Let $i\in\{1,\ldots,r\}$ and $j\in\{1,\ldots,s\}$ be such that
       \begin{align}
        \mkern-8mu \phi^U_1-1 \leq \boldsymbol{\phi^V_s-1 \leq \phi^U_i} \leq \phi^U_1 &\quad \text{and}\quad \phi^V_s-1 \leq \boldsymbol{\phi^V_j-1 \leq \phi^V_s}\label{eq:step2-case2-2}\\
         h=h^U_1=h^U_2=\dots=h^U_i &\quad\text{and}\quad h=h^V_s=h^V_{s-1}=\dots=h^V_j\label{eq:step2-case2-heights}\\
         \phi^U_{i+1} < \phi^V_s-1 &\quad\text{and}\quad \phi^V_s < \phi^V_{j-1}-1.\label{eq:step2-case2-3}
       \end{align}
       Note that since $\kappa > 2$ we have that if \eqref{eq:step2-case2-2} is satisfied, then \eqref{eq:step2-case2-heights} is satisfied as well (see \eqref{eq:height-difference-phase-difference-bigger-than-1} or \eqref{eq:height-difference-phase-difference-bigger-than-1-2}).
       Furthermore, note that the inequalities $\phi^U_i\leq \phi^U_1$ respectively $\phi^V_s-1\leq\phi^V_j-1$ in \eqref{eq:step2-case2-2} are in fact strict inequalities whenever $i\neq 1$ respectively $j\neq s$.
       We denote by
       \begin{align*}
         &\psi^U_k:=\phi(\Rc_h\wt{X}_k)= \phi^{M,U}_k = \phi^U_k+\kappa h \quad \text{for}\quad 1\leq k\leq i,\quad \text{and}\\ & \psi^V_n:=\phi(\Rc_h\wt{Y}_n)=\phi^{M,V}_n = \phi^V_n+\kappa h \quad \text{for}\quad j\leq n\leq s
       \end{align*}
       the phases of the semistable objects in $D\fuk(M)$ obtained by applying the restriction functor $\Rc_h$.
       Note that we have $\psi^U_i<\ldots <\psi^U_1$ and $\psi^V_s < \ldots <\psi^V_j$ and moreover analogues of \eqref{eq:step2-case2-1} and \eqref{eq:step2-case2-2} also hold for these phases.
       Remark \ref{rem:nested-cones} allows us to rewrite $W$ as 
       \begin{align*}
         W \cong &(\wt{X}_r\to\ldots \to\wt{X}_{i+1}\\
         \to&\underbrace{(\wt{X}_i[-1]\to \ldots\to \wt{X}_1[-1] \to\wt{Y}_s[-2]\to\ldots \to\wt{Y}_{j+1}[-2]\to\wt{Y}_j[-1])}_{\textstyle{=:\wt{Z}}}\\
         \to &\wt{Y}_{j-1}[-1]\to \ldots\to \wt{Y}_1[-1]\to 0).
       \end{align*}
       By the above inequalities and by repeatedly applying Remark~\ref{rem:extension-closed} we can determine the range of phases of the semistable factors of the restriction $\Rc_h\wt{Z}$:
       \begin{align*}
         &C_1:=(\underbrace{\Rc_h\wt{Y}_{j+1}[-2]}_{\in \P^M(\psi^V_{j+1}-2)}\to \underbrace{\Rc_h\wt{Y}_j[-1]}_{\in\P^M(\psi^V_j-1)}) &\implies& C_1\in\P^M([\psi^V_{j+1}-1,\psi^V_j-1])\\
         &\psi^V_s-1 < \psi^V_{j+1}-1 <\psi^V_j-1\leq\psi^V_s &\implies& C_1\in\P^M([\psi^V_{s}-1,\psi^V_s])\\
         &C_2:=(\underbrace{\Rc_h\wt{Y}_{j+2}[-2]}_{\in \P^M(\psi^V_{j+2}-2)}\to\mkern-20mu \underbrace{C_1}_{\in\P^M([\psi^V_{s}-1,\psi^V_s])}\mkern-36mu) &\implies& C_2\in\P^M([\psi^V_{s}-1,\psi^V_s])\\
         &\vdots&&\\
         &C_{s-j}:=(\underbrace{\Rc_h\wt{Y}_{s}[-2]}_{\in \P^M(\psi^V_{s}-2)}\to \mkern-16mu\underbrace{C_{s-(j+1)}}_{\in\P^M([\psi^V_{s}-1,\psi^V_s])}\mkern-19mu) &\implies& C_{s-j}\in\P^M([\psi^V_{s}-1,\psi^V_s])\\
         &C_{s-j+1}:=(\underbrace{\Rc_h\wt{X}_{1}[-1]}_{\in \P^M(\psi^U_{1}-1)}\to \mkern-20mu\underbrace{C_{s-j}}_{\in\P^M([\psi^V_{s}-1,\psi^V_s])}\mkern-33mu)& \overset{\eqref{eq:step2-case2-1}}{\implies}& C_{s-j+1}\in\P^M([\psi^V_{s}-1,\psi^V_s])\\
         &C_{s-j+2}:=(\underbrace{\Rc_h\wt{X}_2[-1]}_{\in \P^M(\psi^U_2-1)}\to\mkern-20mu \underbrace{C_{s-j+1}}_{\in\P^M([\psi^V_{s}-1,\psi^V_s])}\mkern-25mu)& \overset{\eqref{eq:step2-case2-2}}{\implies}& C_{s-j+2}\in\P^M([\psi^V_{s}-1,\psi^V_s])\\
         &\vdots&&\\
         &C_{s-j+i}:=(\underbrace{\Rc_h\wt{X}_{i}[-1]}_{\in \P^M(\psi^U_{i}-1)}\to\mkern-20mu \underbrace{C_{s-j+i-1}}_{\in\P^M([\psi^V_{s}-1,\psi^V_s])}\mkern-18mu)& \overset{\eqref{eq:step2-case2-2}}{\implies}& C_{s-j+i}\in\P^M([\psi^V_{s}-1,\psi^V_s]). 
       \end{align*}
       In conclusion, we get
       \begin{align*}
         \Rc_h\wt{Z}\cong C_{s-j+i}\in\P^M([\psi^V_{s}-1,\psi^V_s]),
       \end{align*}
       which allows us to express $\Rc_h\wt{Z}$ as
       \begin{align}\label{eq:cone-decomp-of-RZ}
          \Rc_h\wt{Z} \cong (E_m[-1]\to E_{m-1}[-1]\to\ldots\to E_1[-1]\to 0)
       \end{align}
       where $E_\ell\in\P^M(\psi_{E_\ell})$, for $1\leq \ell\leq m$, are the semistable factors whose phases satisfy
       \begin{align*}
         \psi^V_s-1\leq\psi_{E_m} < \psi_{E_{m-1}}<\ldots < \psi_{E_1}\leq \psi^V_s.
       \end{align*}
       Applying the height $h$ inclusion functor $\Ic^h$ to \eqref{eq:cone-decomp-of-RZ}, and setting $\wt{E}_\ell:=\Ic^h E_\ell$ and $\phi_{\wt{E}_\ell}:=\psi_{E_\ell}-\kappa h$, we obtain the following iterated cone decomposition of $\wt{Z}$ in $D\fuk(\C\times M)$
       \begin{align*}
         \wt{Z}\cong \Ic^h\Rc_h\wt{Z}\cong (\wt{E}_m[-1]\to \wt{E}_{m-1}[-1]\to\ldots\to \wt{E}_1[-1]\to 0)
       \end{align*}
       where $\wt{E}_\ell\in\P(\phi_{\wt{E}_\ell})$ and the phases satisfy
       \begin{align*}
         \phi^V_s-1\leq \phi_{\wt{E}_m} < \phi_{\wt{E}_{m-1}}<\ldots < \phi_{\wt{E}_1}\leq \phi^V_s.
       \end{align*}
       Now, applying Remark \ref{rem:nested-cones} again, we replace $\wt{Z}$ in the iterated cone decomposition of $W$ as follows
       \begin{align}\label{eq:Step-2-Case-2-iterated-cone-decomp-descending-phases}
         W\cong  (\wt{X}_r\to & \ldots \to  \wt{X}_{i+1} \to \wt{E}_m\to\ldots \\ \nonumber
         &\ldots\to \wt{E}_1 \to \wt{Y}_{j-1}[-1]\to \ldots\to \wt{Y}_1[-1]\to 0).
       \end{align}
       Here, the phases satisfy the following inequalities:
       \begin{align*}
         &\phi^U_r+1 < \ldots <\phi^U_{i+1}+1,\\
         &\phi^V_s\leq \phi_{\wt{E}_m}+1 < \phi_{\wt{E}_{m-1}}+1<\ldots < \phi_{\wt{E}_1}+1\leq \phi^V_s+1,\\
         & \phi^V_{j-1}< \ldots <\phi^V_1,
       \end{align*}
       and moreover by \eqref{eq:step2-case2-3}
       \begin{align*}
         \phi^U_{i+1}+1<\phi^V_s \quad\text{and}\quad \phi^V_s+1 <\phi^V_{j-1}
       \end{align*}
       holds.
       The upshot is, that $\phi^U_{i+1}+1<\phi^V_s\leq\phi_{\wt{E}_m}+1$ and $\phi_{\wt{E}_1}+1\leq\phi^V_s+1 <\phi^V_{j-1}$ are indeed satisfied and we have thus found an iterated cone decomposition of $W$ with semistable factors of strictly descending phases, i.e. its Harder-Narasimhan filtration.
      
  \end{enumerate}


In Step~2 we made the assumption that each semistable factor $\wt{X}_j$ (resp.~$\wt{Y}_i$) is isomorphic to an object in the image of some inclusion functor.
If we omit this assumption, then 
$\wt{X}_j$ may be isomorphic to a finite direct sum of the form $\wt{X}_j\cong \bigoplus_k\Ic^{h_k}X_{j,k}$, where each summand is semistable of phase $\phi^U_j$ (similarly for each $\wt{Y}_i$).
Therefore we may refine the above HN-filtration of $U$ \eqref{diag:cone-decomp-step2-U} (resp. of $V$ \eqref{diag:cone-decomp-step2-U}) to a longer cone decomposition in which each semistable factor is isomorphic to an object in the image of some inclusion functor $\Ic^{h_j}$, at the expense that the inequality of phases may only be descending and not necessarily \emph{strictly} descending anymore.
However, the arguments of Step 2 work in the same way for this refined cone decomposition, with the exception that some of the strict inequalities might become non-strict.
Hence, we end up with a decomposition as \eqref{eq:Step-2-Case-2-iterated-cone-decomp-descending-phases} in which some consecutive factors may have the same phase.
If this is the case, we can reverse the previous refinement and replace these consecutive factors by their direct sum, as in Step~1 Case~2 \ref{A4-Step-1-Case-2-i} (i.e.~by reversing how they were originally obtained).
After this process we end up with an iterated cone decomposition with strictly decreasing phases, i.e. with the HN-filtration of $W$.\par

So far we have shown that every cone $W\cong(U\to V)$ over two Yoneda-modules admits a HN-filtration.
The same argument shows that any cone $(Q\to W)$ respectively $(W\to Q)$, where $Q$ is a Yoneda-module, admits a HN-filtration as well.
Hence, by iteration, it follows that any iterated cone over Yoneda-modules admits an HN-filtration.
Since $D\fuk(\C\times M)$ is generated by Yoneda-modules this implies that $\P$ satisfies \ref{axiom-4}.\par

\end{enumerate}

This concludes the proof that $\P$ is a \emph{slicing} on $D\fuk(\C\times M)$.
It remains to check that the slicing $\P$ is locally-finite.
See Appendix \ref{appendix:qac} for some information on quasi-abelian categories.

\begin{claim}\label{claim:(Z,P)-locally-finite}
  If $(Z^M,\P^M)$ is a locally-finite stability condition, then $(Z,\P)$ is locally-finite as well.
\end{claim}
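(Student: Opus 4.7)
The plan is to transfer local finiteness from $(Z^M,\P^M)$ to $(Z,\P)$ by means of the inclusion and restriction functors. Choose $\eta\in(0,\tfrac{1}{2})$ witnessing local finiteness of $(Z^M,\P^M)$, so that $\P^M((\psi-\eta,\psi+\eta))\subset D\fuk(M)$ is of finite length for every $\psi\in\R$. The claim will be that the same $\eta$ works for $(Z,\P)$.

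Fix $\phi\in\R$ and $W\in\P((\phi-\eta,\phi+\eta))$. By axiom \ref{axiom-4}, just established, $W$ admits a finite HN-filtration with semistable factors $W_i\in\P(\phi_i)$ and $\phi_i\in(\phi-\eta,\phi+\eta)$, so it suffices to treat the case where $W\in\P(\phi_0)$ is semistable. By the very definition of $\P(\phi_0)$ in \eqref{df:slicing-phase}, $W$ is a finite direct sum of objects of the form $\Ic^{h}X[r]$ with $X\in\P^M(\phi_0-r+\kappa h)$ semistable, and it is therefore enough to bound the length of a single such summand. Apply local finiteness of $(Z^M,\P^M)$ to obtain a Jordan--H\"older filtration $0=X_0\subset X_1\subset\cdots\subset X_n=X$ in $\P^M(\phi_0-r+\kappa h)$ with stable quotients $S_j:=X_j/X_{j-1}$. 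Since $\Ic^h[r]$ is a triangulated functor, this pushes forward to a finite filtration of $\Ic^h X[r]$ in $D\fuk(\C\times M)$ whose successive cones are $\Ic^h S_j[r]\in\P(\phi_0)$. Concatenating over the direct summands and over the HN-factors of $W$ produces a finite filtration of $W$ in $\P((\phi-\eta,\phi+\eta))$ whose quotients are all of the form $\Ic^h S[r]$ with $S$ stable in $\P^M$.

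The main obstacle is to identify each such $\Ic^h S[r]$ as a \emph{stable} (i.e.\ simple in the quasi-abelian sense) object of $\P((\phi-\eta,\phi+\eta))$. Concretely, given a strict short exact sequence $F\to \Ic^h S[r]\to G\to F[1]$ with $F,G\in\P((\phi-\eta,\phi+\eta))$, one must conclude that $F=0$ or $G=0$. The strategy is to use Lemma~\ref{lem:hom-inclusion} together with the restriction functors $\Rc_h$ and $\Rc_1$: after passing to the HN-filtrations of $F$ and $G$, their semistable factors are direct sums of $\Ic^{h'}Y[s]$ with $Y$ semistable in $\P^M$, and Lemma~\ref{lem:hom-inclusion} severely restricts which heights $h'$ can support a nonzero map into or out of $\Ic^h S[r]$. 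Applying $\Rc_h$ and using Remark~\ref{rem:restriction-inclusion-relation} to keep track of phases yields an exact triangle inside the abelian category $\P^M(\phi_0-r+\kappa h+\text{const})$ in which $S[r]$ appears as a vertex; stability of $S$ in $\P^M$ forces one of the restrictions to vanish, and a parallel use of $\Rc_1$ handles the contributions coming from the height-$1$ end. Combining these vanishing statements with the structural analysis of the direct-sum decompositions of $F$ and $G$ then forces $F=0$ or $G=0$, so $\Ic^h S[r]$ is indeed stable. Hence the filtration constructed above is a genuine Jordan--H\"older filtration, completing the proof of local finiteness.
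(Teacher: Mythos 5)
Your overall strategy is in the same spirit as the paper's: reduce to the generators $\Ic^{h}X[r]$ of \eqref{df:slicing-phase}, import the Jordan--H\"older filtrations of $X$ from the locally-finite $(Z^M,\P^M)$ via the inclusion functors, and use Lemma~\ref{lem:hom-inclusion} to control interactions between different heights. However, there is a genuine gap: the step you yourself label ``the main obstacle'' --- that each $\Ic^{h}S[r]$ with $S$ stable in $\P^M$ is \emph{strictly stable} (simple) in the quasi-abelian category $\P((\phi-\eta,\phi+\eta))$ --- is the entire content of the claim, and you only describe a ``strategy'' for it. As written, the argument ``stability of $S$ in $\P^M$ forces one of the restrictions to vanish \dots\ combining these vanishing statements \dots\ then forces $F=0$ or $G=0$'' is not a proof: $\Rc_h F$ and $\Rc_h G$ land in a quasi-abelian $\P^M(J)$ for an interval $J$ of length $<1$, not in the single abelian slice $\P^M(\phi_0-r+\kappa h)$ where simplicity of $S$ is available, and $\Rc_h$ annihilates the components of $F,G$ at heights other than $h$ (and $1$), so one must separately rule out sub- and quotient objects supported at other heights. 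The missing input that would make your route work is the observation that for $|I|<1$ \emph{all} morphisms between objects of $\P(I)$ of different heights vanish: for $h'<h$ this is Lemma~\ref{lem:hom-inclusion}, and for $h'>h$ it follows from Lemma~\ref{lem:hom-inclusion} together with \ref{axiom-3} downstairs, since the downstairs phases then differ by at least $\kappa-1>1$. This orthogonality splits $\P(I)$ into height components and reduces simplicity of $\Ic^{h}S[r]$ to the single-height case, where $\Ic^{h}$ identifies the component with a shift of $\P^M(I+\kappa h)$.

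For comparison, the paper avoids having to verify simplicity head-on: it directly builds a finite filtration of an arbitrary extension $(\wt{X}\to\wt{Y})$ of generators by a trichotomy on the heights of the connecting morphism $\alpha$. When $h'<h''$ the map $\alpha$ vanishes by Lemma~\ref{lem:hom-inclusion}; when $h'>h''$ it vanishes by a phase estimate using $\kappa>2$ and \ref{axiom-3}; and when $h'=h''$ the whole cone is $\Ic^{h}$ applied to a cone in $D\fuk(M)$, where local finiteness of $(Z^M,\P^M)$ applies directly. You should either carry out the orthogonality argument above or restructure along the paper's lines; also note that your initial reduction (``it suffices to treat the semistable case'') tacitly uses that finite length is preserved under extensions in $\P(I)$ and that the HN factors of an object of $\P(I)$ have phases in $I$ --- both true, but worth a sentence each.
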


\begin{proof}
  Suppose that $(Z^M,\P^M)$ is locally-finite with parameter $\eta\in (0,\frac12)$ and let $\phi\in\R$.\par
  First note that, for any $\phi'\in (\phi-\eta,\phi+\eta)$, each object $\wt{X}:=\Ic^hX[r] \in \P(\phi')$ is isomorphic to $\Ic^{h}(\mathcal{C'})$, where $\mathcal{C'}$ is a finite cone decomposition coming from a finite Jordan-H\"older filtration of $\Rc_h \wt{X}$, which exists as $(Z^M,\P^M)$ is locally-finite.\par
  Every object of $\P((\phi-\eta,\phi+\eta))$ is isomorphic to a finite extension by objects of the subcategories $\P(\phi')$ with $\phi'\in (\phi-\eta,\phi+\eta)$.
  We will first check that any cone $(\wt{X}\to \wt{Y})$, with
  \begin{align*}
     \wt{X}\cong \Ic^{h'}X[r]\in \P(\phi'-1) \quad \text{and}\quad \wt{Y}\cong \Ic^{h''}Y[t]\in\P(\phi'')
  \end{align*}
and $\phi',\phi''\in (\phi-\eta,\phi+\eta)$ admits a finite Jordan-H\"older filtration.\par

  Let $\epsilon \in (-\eta,\eta)$ be such that $ \phi'+\epsilon =\phi''$.
  Note that $\abs{\epsilon}<1$ and
  \begin{equation*}
   X\in \P^M(\phi'-1-r+\kappa h')\quad \text{and}\quad Y\in \P^M(\phi''-t+\kappa h'').
\end{equation*}
 Since $(Z^M,\P^M)$ is locally finite, there are finite Jordan-H\"older filtrations of $X$ and $Y$:
 \begin{align*}
   X\cong (E_k[-1]\to\ldots\to E_1[-1]\to 0) \quad\text{and}\quad  Y\cong (F_n[-1]\to \ldots\to F_1[-1]\to 0).
 \end{align*}
 with stable factors $E_j\in\P^M(\phi'-1-r+\kappa h')$ for $1\leq j \leq k$ respectively $F_\ell\in \P^M(\phi'' -t + \kappa h'')$ for $1\leq \ell \leq n$.
 Setting $\wt{E}_j := \Ic^{h'}E_j[r]\in \P(\phi'-1)$ and $\wt{F}_\ell:=\Ic^{h''}F_j[t]\in \P(\phi'')$ we get the following JH-filtrations of $\wt{X}$ and $\wt{Y}$:
\begin{align*}
   \wt{X}\cong (\wt{E}_k[-1]\to\ldots\to \wt{E}_1[-1]\to 0) \quad\text{and}\quad  \wt{Y}\cong (\wt{F}_n[-1]\to \ldots\to \wt{F}_1[-1]\to 0).
 \end{align*}
 By Remark~\ref{rem:shifting-iterated-cones} we obtain the following iterated cone decomposition
 \begin{align}\label{eq:JH-filtration-locfin}
   (\wt{X}\to \wt{Y}) \cong &  ((\wt{E}_k[-1]\to\ldots\to \wt{E}_1[-1]\to 0)\to (\wt{F}_n[-1]\to \ldots\to \wt{F}_1[-1]\to 0))\nonumber \\
                      \cong &  (\wt{E}_k\to\ldots\to \wt{E}_1\to\wt{F}_n[-1]\to \ldots\to \wt{F}_1[-1]\to 0) \\
                      \cong &  (\wt{E}_k\to\ldots\to (\wt{E}_1[-1]\overset{\alpha}{\to}\wt{F}_n[-1])\to \ldots\to \wt{F}_1[-1]\to 0).\nonumber
 \end{align}
  \begin{itemize}
    \item If $h' < h''$, then by Lemma~\ref{lem:hom-inclusion} it follows that $\alpha=0$.\par
          If $\phi'\leq \phi''$, then \eqref{eq:JH-filtration-locfin} is already the desired JH-filtration of $(\wt{X}\to \wt{Y})$.\\
          If $\phi'>\phi''$, then, using Remark~\ref{rem:cone-direct-sum}, we may reorder the factors of \eqref{eq:JH-filtration-locfin} to obtain the following JH-filtration of $(\wt{X}\to \wt{Y})$
          \begin{align*}
            (\wt{X}\to \wt{Y}) \cong (\wt{F}_n[-1]\to \ldots\to \wt{F}_1[-1]\to\wt{E}_k\to\ldots\to \wt{E}_1\to 0).
          \end{align*}

    \item If $h'>h''$, then by Lemma~\ref{lem:hom-inclusion}, we have that 
      $$
      \Hom_{D\fuk(\C\times M)}(\wt{E}_1[-1],\wt{F}_n[-1])\cong \Hom_{D\fuk(M)}(E_1[r-1],F_n[t-1]).
      $$
      But we also get the following inequality of phases:
      \begin{align*}
        \phi(E_1[r-1])&=\phi'-1+\kappa h'-1\\
                     &= \phi'' - \underbrace{(1+\epsilon)}_{<2<\kappa} + \kappa h' -1\\
                     &> \phi'' + \kappa \underbrace{(h'-1)}_{\geq h''} -1 \\
                     &\geq \phi'' + \kappa h'' -1 \\
                     &= \phi(F_n[t-1])
      \end{align*}
      Therefore, by \ref{axiom-3}, $\alpha=0$ in this case as well and we may conclude as in the previous case.

    \item If $h'=h''$, then $\wt{X}$ and $\wt{Y}$ are on the same height $h$.
      Any cone $(\wt{X}\to\wt{Y})$ is isomorphic to $\Ic^h(\mathcal{C})$, where $\mathcal{C}$ denotes a finite iterated cone decomposition of $C:=(\Rc_h \wt{X}\to \Rc_h\wt{Y})$ in $D\fuk(M)$, coming from a Jordan-H\"older filtration of $C$.
      A finite Jordan-H\"older filtration of $C$ exists, since $(Z^M,\P^M)$ is locally-finite.
      Note that this argument also works for longer iterated cones, all of whose objects are on the same height.
  \end{itemize}
  
  As noted above, every object of $\P((\phi-\eta,\phi+\eta))$ is isomorphic to a finite extension by objects of the subcategories $\P(\phi')$ with $\phi'\in (\phi-\eta,\phi+\eta)$.
  Thus, by similar arguments as above we can obtain a finite JH-filtration of any object of $\P((\phi-\eta,\phi+\eta))$.
  In conclusion, each object of $\P(\phi-\eta,\phi+\eta)$ has a finite Jordan-H\"older decomposition and hence is of finite length.
\end{proof}

This completes the proof of Theorem~\ref{thm:(Z,P)-is-SC} and shows that $(Z,\P_\kappa)$ is a locally-finite stability condition on $D\fuk(\C\times M)$ for every $\kappa\in2\cdot \Z_{>1}$.
\end{proof}


\begin{appendix}
\renewcommand{\thethm}{\Alph{section}.\arabic{thm}}
   \section{Exact triangles and iterated cone decompositions in a triangulated category}\label{subsubsec:cones}

We collect some remarks about iterated cones in a triangulated category $\D$.
There are many references on triangulated categories, some of which are \cite{weibel-book,Verdier96-PhD,Faisceaux-Pervers}.
 \par

In a triangulated category $\D$ every morphism $\alpha\in \Hom_{\D}(X,Y)$ can be embedded in an exact triangle $X\overset{\alpha}{\To}Y\To Z \To X[1]$.
The object $Z$ (or, by abuse of notation, any object isomorphic to $Z$) is said to be the \emph{cone} over the morphism $\alpha$ and we will denote it by $\mathrm{cone}(\alpha)$ or by $(X\overset{\alpha}{\to} Y)$.
In case the morphism over which we take the cone is not specified we will just write $(X\to Y)$ for the cone.\par
Recall that the \emph{Grothendieck group} (or \emph{$K$-group}) $K_0(\D)$ of a triangulated category $\D$ is the free abelian group generated by the objects of $\D$ modulo the following relations:
$X-Y+Z=0$ whenever there is an exact triangle $X\to Y\to Z\to X[1]$ in $\D$.\par

\begin{rem}\label{rem:cone-direct-sum}
  \begin{enumerate}
    \item The cone over any isomorphism is the zero object, that is:\newline
      $(X\overset{\cong}{\to}Y)\cong 0$ or put differently, $X\overset{\cong}{\to}Y\to 0\to X[1]$ is an exact triangle.\par
    \item The cone over the zero morphism is isomorphic to the direct sum (see \cite[Cor.~II.1.2.6]{Verdier96-PhD}).
          More precisely, we have
          $$
          (X\overset{0}{\to}Y) \cong X[1]\oplus Y \cong Y\oplus X[1] \cong (Y[-1]\overset{0}{\to}X[1]).
          $$
\end{enumerate}
\end{rem}

\begin{rem}\label{rem:K-group-cone-sign-relation}
  If an object $X\in \D$ admits an iterated cone decomposition of the form
$$
X\cong (X_1\to(X_2\to(X_3\to(\ldots\to(X_{m-1}\to X_m)\ldots)
$$
then the relation
$$
[X]=[X_m]-\sum_{i=1}^{m-1} [X_i]
$$
is satisfied in the $K$-group.
In particular, note that $\left[X[1]\right]=-[X]$.\par
We will simply write $(X_1\to X_2\to \ldots\to X_{m-1}\to X_m)$ for iterated cone decompositions as above.
\end{rem}

\begin{rem}\label{rem:shifting-iterated-cones}
  Using the Octahedral-Axiom (see~\cite[Def.~10.2.1]{weibel-book}) one can show that 
  $$(X\to(Y\to Z))\cong ((X[-1]\to Y)\to Z),$$
  where the morphisms will not be further specified.
  This allows us to reorder the brackets in an iterated cone decomposition.
  In particular, we can write
  \begin{align*}
  &(X_1\to X_2\to \ldots\to X_{m-1}\to X_m)\\ \cong &(X_1\to X_2\to \ldots(X_j[-1]\to X_{j+1})\to\ldots\to X_{m-1}\to X_m).
  \end{align*}
  with the convention of the previous remark.
\end{rem}

\begin{rem}\label{rem:nested-cones}
  Nested iterated cone  decompositions behave well in the following sense (cf.~\cite[Lemma~9.3]{Haug-T^2}).
  Suppose that an object $X\in \D$ admits an iterated cone decomposition of the form
$$
X\cong (X_1\to X_2\to \ldots\to X_{m-1}\to X_m)
$$
and that one of the objects $X_j$ admits a cone decomposition (note that, by shifting the last object by $-1$, we can always arrange for such an iterated cone decomposition to end in $0$) 
$$
X_j\cong (X_j^1\to X_j^2\to \ldots\to X_j^{k-1}\to X_j^k\to 0).
$$
Then $X$ admits the following iterated cone decomposition
$$
X\cong (X_1\to\mkern-0.2mu\ldots\mkern-0.2mu\to X_{j-1}\to X_j^1[1]\to\mkern-0.2mu\ldots \mkern-0.2mu\to X_{j}^{k-1}[1]\to X_j^k[1]\to X_{j+1}\to\mkern-0.2mu \ldots\to \mkern-0.2mu X_m).
$$
This follows by reordering the brackets as in the previous remark.
\end{rem}


   \section{Uniqueness of the Harder-Narasimhan filtration}\label{appendix:HN-filtration}

Since the proof of the uniqueness of the Harder-Narasimhan filtration seems to not be readily available in the literature, we will, for completeness' sake, give an argument in this section.\\ \par

For the remainder of this section let $\P$ denote a \emph{slicing} (see Definition~\ref{df:SC}) on a triangulated category $\D$.
Recall that a decomposition of a non-zero object $E$ of $\D$ as in axiom~\ref{axiom-4} is called a \emph{Harder-Narasimhan filtration} (or HN-filtration) of $E$.

\begin{prop}\label{prop:HN-unique}
  The Harder-Narasimhan filtration of any non-zero object of $\D$ is unique up to isomorphism of the semistable factors.
\end{prop}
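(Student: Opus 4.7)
The plan is to prove uniqueness by induction on the length $n$ of a chosen Harder--Narasimhan filtration of $E$, using the maximal phase of $E$ as the key invariant that pins down the top semistable factor.

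First I would establish two key vanishing lemmas, both proved by induction on the length of an HN-filtration by walking up the cone decomposition and applying axiom \ref{axiom-3} to the long exact Hom-sequence associated to each triangle $E_{j-1}\to E_j\to A_j\to E_{j-1}[1]$.
\begin{lem*}
Suppose $E$ has an HN-filtration with phases $\phi_1>\ldots>\phi_n$ and semistable factors $A_1,\ldots,A_n$. Then
\begin{enumerate}[(a)]
 \item $\Hom_\D(X,E)=0$ for every $X\in\P(\phi)$ with $\phi>\phi_1$;
 \item the natural map $\Hom_\D(A_1,E_1)\to\Hom_\D(A_1,E)$ induced by the chain $E_1\to E_2\to\cdots\to E_n=E$ is an isomorphism.
\end{enumerate}
\end{lem*}
For (a), the inductive step uses that $\phi>\phi_1>\phi_j$ forces $\Hom(X,A_j)=0$. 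For (b), the inductive step uses that $\phi_1>\phi_j$ \emph{and} $\phi_1>\phi_j-1$ (since $\phi_j<\phi_1<\phi_1+1$), giving vanishing of both $\Hom(A_1,A_j)$ and $\Hom(A_1,A_j[-1])$.

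In particular, applying (b) to $\mathrm{id}_{A_1}$ produces a canonical nonzero morphism $\iota\colon A_1\to E$, so the maximal phase of $E$ is intrinsically $\phi_1=\phi^+(E)$. Now suppose $E$ admits a second HN-filtration with phases $\phi'_1>\ldots>\phi'_m$ and factors $A'_1,\ldots,A'_m$, giving another canonical morphism $\iota'\colon A'_1\to E$. Then (a) applied to both filtrations forces $\phi_1=\phi'_1$. To identify the top factors, I would apply $\Hom_\D(A_1,-)$ to the triangle $A'_1\to E\to F'\to A'_1[1]$ (where $F'$ is the cofiber, whose HN-phases are $\phi'_2,\ldots,\phi'_m$, all strictly less than $\phi_1=\phi'_1$): part (a) then shows $\Hom(A_1,F')=0=\Hom(A_1,F'[-1])$, whence $\Hom(A_1,A'_1)\cong\Hom(A_1,E)$. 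Hence $\iota$ factors uniquely as $\iota=\iota'\circ\alpha$ for some $\alpha\colon A_1\to A'_1$. Symmetrically $\iota'=\iota\circ\alpha'$. Substituting and using that the isomorphism $\Hom(A_1,A_1)\cong\Hom(A_1,E)$ from (b) sends $\mathrm{id}_{A_1}\mapsto\iota$, I can read off $\alpha'\circ\alpha=\mathrm{id}_{A_1}$; symmetrically $\alpha\circ\alpha'=\mathrm{id}_{A'_1}$, so $\alpha$ is an isomorphism.

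Finally, from the commuting square $\iota'\circ\alpha=\iota$ together with $\mathrm{id}_E$ I would apply axiom TR3 of triangulated categories to get a morphism between the triangles $A_1\to E\to F\to A_1[1]$ and $A'_1\to E\to F'\to A'_1[1]$, and then the triangulated $5$-lemma produces an isomorphism $F\cong F'$. The cofiber $F$ inherits an HN-filtration of length $n-1$ with phases $\phi_2>\ldots>\phi_n$ (by repeated use of the octahedral axiom applied to $0\to E_1\to E_j$), and similarly for $F'$, so the induction hypothesis applied to $F\cong F'$ finishes the proof. The main technical obstacle is really only the careful setup of lemma (b): once one notices that both the vanishing of $\Hom(A_1,A_j)$ \emph{and} of $\Hom(A_1,A_j[-1])$ are available for $j\geq 2$, everything else is a formal manipulation with exact triangles.
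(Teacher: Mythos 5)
Your proposal is correct and follows essentially the same strategy as the paper's Appendix~\ref{appendix:HN-filtration}: the same Hom-vanishing computations (your (a) is Lemma~\ref{lem:app-HN-filtr-1}, your (b) is the computation inside Lemma~\ref{lem:app-HN-filtr-3}), equality of top phases, then peeling off the top semistable factor and inducting on the cofiber. The only difference is mechanical: where you build $\alpha\colon A_1\to A'_1$ by factoring the canonical maps through each other and then invoke TR3 plus the five lemma for the cofibers, the paper packages both steps into Lemma~\ref{lem:app-HN-filtr-4}, using the uniqueness clause of the Gelfand--Manin completion lemma (Lemma~\ref{lem:Gelfand-Manin-triangle-extension}) to extend $\id_E$ to an isomorphism of triangles in one stroke; just make sure your base case records that a zero cofiber forces the other filtration to terminate (the paper's Lemma~\ref{lem:app-HN-filtr-5}), which follows from your lemma (b).
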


In order to prove the above assertion we will require the following results.
The first of these is Lemma~\ref{lem:Gelfand-Manin-triangle-extension}, which can be found in \cite[Sect.~IV.1]{Gelfand-Manin} along with its proof.

\begin{lem}\label{lem:Gelfand-Manin-triangle-extension}
 Let $X\overset{u}{\To} Y \overset{v}{\To} Z \overset{w}{\To} X[1]$ and $X'\overset{u'}{\To} Y' \overset{v'}{\To} Z' \overset{w'}{\To} X[1]$ be two exact triangles and $g\in \Hom(Y,Y')$.
 If $v'gu=0$, then $g$ can be completed to a morphism of exact triangles $(f,g,h)$.
 If, in addition, $\Hom(X,Z'[-1])=0$, then this morphism of triangles is unique.
\begin{equation*}
\xymatrix{ 
         X \ar[r]^u \ar@{-->}[d]^{f} &Y \ar[r]\ar[d]^{g} & Z \ar[r]\ar@{-->}[d]^{h} &X[1]\ar@{-->}[d]^{f[1]} \\
          X' \ar[r] & Y'  \ar[r] & Z' \ar[r]  & X'[1] 
 }
\end{equation*}
\end{lem}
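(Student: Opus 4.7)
The plan is to prove the two parts separately using the axioms of a triangulated category, principally the fact that morphisms form long exact sequences in each variable (a standard consequence of the rotation and completion axioms, see e.g.\ \cite[Chap.~II, Prop.~1.2.1]{Verdier96-PhD}).

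For existence, the hypothesis $v'gu = 0$ is exactly what is needed to produce $f$. Applying the cohomological functor $\Hom(X, -)$ to the second triangle yields the exact sequence
\begin{equation*}
  \Hom(X, X') \xrightarrow{u'_*} \Hom(X, Y') \xrightarrow{v'_*} \Hom(X, Z'),
\end{equation*}
so the element $gu \in \Hom(X, Y')$, which is killed by $v'_*$, lifts to some $f \in \Hom(X, X')$ satisfying $u'f = gu$. This produces a commutative square between the two base morphisms $u$ and $u'$, and the completion axiom (TR3) of triangulated categories then furnishes $h : Z \to Z'$ making the full triple $(f, g, h)$ into a morphism of triangles.

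For uniqueness, suppose $(f_1, g, h_1)$ and $(f_2, g, h_2)$ are two such completions. Setting $f := f_1 - f_2$ and $h := h_1 - h_2$, additivity gives a morphism of triangles $(f, 0, h)$. The commutativity of the left square reads $u' f = 0 \cdot u = 0$, while the commutativity of the right square reads $h v = v' \cdot 0 = 0$. Applying $\Hom(X, -)$ to the rotated triangle $Z'[-1] \xrightarrow{-w'[-1]} X' \xrightarrow{u'} Y'$ shows that $f$, being killed by $u'_*$, lifts to a morphism $X \to Z'[-1]$; but $\Hom(X, Z'[-1]) = 0$ by assumption, so $f = 0$. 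Dually, applying $\Hom(-, Z')$ to $Y \xrightarrow{v} Z \xrightarrow{w} X[1]$ shows that $h$, being killed by $v^*$, factors through a morphism $X[1] \to Z'$, and $\Hom(X[1], Z') \cong \Hom(X, Z'[-1]) = 0$, so $h = 0$ as well.

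There is no real obstacle here: this is a direct diagram chase, and the only ingredient beyond the triangulated axioms themselves is the standard fact that $\Hom(W, -)$ and $\Hom(-, W)$ send exact triangles to long exact sequences. The mild subtlety is to remember to rotate the target triangle when extracting $f$ from $u' f = 0$, and to rotate the source triangle when extracting $h$ from $h v = 0$, so that both conclusions reduce to the single vanishing hypothesis $\Hom(X, Z'[-1]) = 0$.
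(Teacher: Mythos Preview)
Your argument is correct and is exactly the standard proof. Note that the paper does not actually supply its own proof of this lemma: it simply cites \cite[Sect.~IV.1]{Gelfand-Manin}, and what you have written is essentially the argument found there.
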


\begin{lem}\label{lem:app-HN-filtr-1}
  Suppose that $0\neq E\in \D$ has a HN-filtration
 \begin{equation*} \xymatrix@R16pt@C8pt{
        0=E_0 \ar[rr]  & & E_1 \ar[dl]\ar[rr]  & & E_2 \ar[dl]\ar[r] & \cdots  \ar[r] & E_{n-1} \ar[rr]  & & E_n=E \ar[dl]\\
        & A_1\ar@{-->}[ul] & & A_2\ar@{-->}[ul] &&& & A_n\ar@{-->}[ul]
      }
\end{equation*}
with $A_j\in\P(\phi_j)$ for $j=1,\ldots,n$ and $\phi_1>\ldots>\phi_n$.
Let $A\in\P(\phi)$ with $\phi>\phi_1$, then $\Hom(A,E)=0$.
\end{lem}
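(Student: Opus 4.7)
The plan is to prove the lemma by induction on the length $n$ of the given Harder--Narasimhan filtration of $E$, using the axioms \ref{axiom-3} together with the long exact sequence obtained from applying the cohomological functor $\Hom(A,-)$ to the exact triangles appearing in the HN-filtration.

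For the base case $n=1$, we have $E \cong A_1 \in \P(\phi_1)$ with $\phi > \phi_1$, and axiom \ref{axiom-3} directly yields $\Hom(A,E) = \Hom(A,A_1) = 0$.

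For the inductive step, suppose the statement holds for all objects admitting HN-filtrations of length strictly less than $n$. Observe that the truncated filtration
$$0 = E_0 \to E_1 \to \cdots \to E_{n-1}$$
together with the semistable factors $A_1,\dots,A_{n-1}$ of strictly decreasing phases $\phi_1 > \cdots > \phi_{n-1}$ is itself a HN-filtration of $E_{n-1}$ (of length $n-1$). The last exact triangle $E_{n-1} \to E_n = E \to A_n \to E_{n-1}[1]$ then induces, after applying the cohomological functor $\Hom(A,-)$, a long exact sequence containing the fragment
$$\Hom(A, E_{n-1}) \To \Hom(A, E) \To \Hom(A, A_n).$$
By the induction hypothesis applied to $E_{n-1}$ (whose highest phase is still $\phi_1 < \phi$), the left-hand group vanishes; by axiom \ref{axiom-3} applied to $A \in \P(\phi)$ and $A_n \in \P(\phi_n)$ with $\phi > \phi_1 > \phi_n$, the right-hand group vanishes as well. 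Hence $\Hom(A,E) = 0$, completing the induction.

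There is no substantive obstacle: the argument is entirely formal once one notices that the initial segment of a HN-filtration of $E$ gives a HN-filtration of $E_{n-1}$, so the inductive hypothesis applies to $E_{n-1}$ with the same bound $\phi > \phi_1$. The only mild point of care is to ensure that the phase inequalities required by \ref{axiom-3} are preserved at both ends of the long exact sequence, which is immediate from $\phi > \phi_1 \geq \phi_j$ for all $j$.
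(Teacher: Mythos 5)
Your proof is correct and follows essentially the same route as the paper: both apply the cohomological functor $\Hom(A,-)$ to the exact triangles of the filtration and invoke axiom \ref{axiom-3} to kill the outer terms; the paper phrases this as a chain of isomorphisms $\Hom(A,E)\cong\Hom(A,E_{n-1})\cong\ldots\cong\Hom(A,E_0)=0$, while you package the identical mechanism as an induction on the filtration length.
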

\begin{proof}
  For each $j=1,\ldots,n$ we have an exact triangle $A_j[-1]\to E_{j-1}\to E_j \to A_j$ and hence (see \cite[Chap.~2~Prop.~1.2.1]{Verdier96-PhD}) we obtain the following long exact sequences
  \begin{equation*}
   \To \Hom(A,A_j[-1])\To \Hom(A,E_{j-1})\To \Hom(A,E_j) \To \Hom(A,A_j)\To
  \end{equation*}
  Axiom~\ref{axiom-3} implies that $\Hom(A,A_j[-1])=\Hom(A,A_j)=0$ and thus we get that $\Hom(A,E_{j-1})\cong \Hom(A,E_j)$.
  Therefore we conclude that $\Hom(A,E)\cong \Hom(A,E_{n-1})\cong\ldots\cong \Hom(A,E_0)=0$.
\end{proof}

\begin{lem}\label{lem:app-HN-filtr-2}
  Suppose that $E$ and $E'$ are two non-zero objects of $\D$ with HN-filtrations
 \begin{equation*} \xymatrix@R16pt@C8pt{
        0=E_0 \ar[rr]  & & E_1 \ar[dl]\ar[rr]  & & E_2 \ar[dl]\ar[r] & \cdots  \ar[r] & E_{n-1} \ar[rr]  & & E_n=E \ar[dl]\\
        & A_1\ar@{-->}[ul] & & A_2\ar@{-->}[ul] &&& & A_n\ar@{-->}[ul]
      }
\end{equation*}
 \begin{equation*} \xymatrix@R16pt@C8pt{
        0=E'_0 \ar[rr]  & & E'_1 \ar[dl]\ar[rr]  & & E'_2 \ar[dl]\ar[r] & \cdots  \ar[r] & E'_{m-1} \ar[rr]  & & E'_m=E' \ar[dl]\\
        & A'_1\ar@{-->}[ul] & & A'_2\ar@{-->}[ul] &&& & A'_m\ar@{-->}[ul]
      }
\end{equation*}
with $A_j\in\P(\phi_j)$, $A'_k\in\P(\phi'_k)$ satisfying $\phi_1>\ldots > \phi_n>\phi'_1>\ldots \phi'_m$.
Then $\Hom(E,E')=0$.
\end{lem}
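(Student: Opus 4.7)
The strategy is to reduce the statement to Lemma~\ref{lem:app-HN-filtr-1} and then to induct along the HN-filtration of $E$.

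First I would observe that, since $\phi_j \geq \phi_n > \phi'_1$ for every $j=1,\ldots,n$, Lemma~\ref{lem:app-HN-filtr-1} applied to $E'$ (whose top phase in its HN-filtration is $\phi'_1$) immediately gives $\Hom(A_j, E') = 0$. Moreover, by axiom~\ref{axiom-2}, $A_j[-1] \in \P(\phi_j + 1)$, and since $\phi_j+1 > \phi'_1$ as well, the same lemma yields $\Hom(A_j[-1], E') = 0$.

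Next I would run an induction on $j$ using the exact triangles
\begin{equation*}
A_j[-1] \To E_{j-1} \To E_j \To A_j
\end{equation*}
furnished by the HN-filtration of $E$. The base case $\Hom(E_0, E') = \Hom(0, E') = 0$ is trivial. For the inductive step, applying the cohomological functor $\Hom(-, E')$ to the above exact triangle produces the long exact sequence
\begin{equation*}
  \Hom(A_j, E') \To \Hom(E_j, E') \To \Hom(E_{j-1}, E') \To \Hom(A_j[-1], E').
\end{equation*}
By the previous paragraph the outer two terms vanish, and by induction the right-hand middle term vanishes, so $\Hom(E_j, E') = 0$. After $n$ steps one obtains $\Hom(E, E') = \Hom(E_n, E') = 0$, as desired.

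There is no real obstacle here: once Lemma~\ref{lem:app-HN-filtr-1} is in hand, the argument is a routine devissage along the HN-filtration of $E$ combined with axiom~\ref{axiom-2}. The only thing one must be slightly careful about is to use the shifted vanishing $\Hom(A_j[-1], E')=0$ in addition to $\Hom(A_j,E')=0$, since both endpoints of the long exact sequence must be killed to conclude the induction.
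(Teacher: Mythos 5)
Your overall strategy---d\'evissage along the HN-filtration of $E$ via the triangles $A_j[-1]\to E_{j-1}\to E_j\to A_j$, with Lemma~\ref{lem:app-HN-filtr-1} killing the semistable factors and induction handling the rest---is exactly the paper's proof, and the induction itself is sound. However, your side-claim that $\Hom(A_j[-1],E')=0$ is both incorrectly justified and, luckily, unnecessary. Axiom~\ref{axiom-2} gives $A_j[-1]\in\P(\phi_j-1)$, not $\P(\phi_j+1)$ (shifting down \emph{lowers} the phase), and since $\phi_j-1$ need not exceed $\phi'_1$, Lemma~\ref{lem:app-HN-filtr-1} does not apply; this Hom-space can genuinely be nonzero (e.g.\ on an elliptic curve, $A_j=\OO_C[1]$ of phase $3/2$ and $E'=\kk(p)$ of phase $1$ satisfy the hypotheses, yet $\Hom(\OO_C,\kk(p))\neq 0$). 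Fortunately you never need that term: exactness of the segment $\Hom(A_j,E')\to\Hom(E_j,E')\to\Hom(E_{j-1},E')$ at the middle spot already forces $\Hom(E_j,E')=0$ once the two flanking terms vanish---the fourth term would only matter if you were propagating vanishing in the opposite direction. Delete that claim and the closing cautionary remark, and the proof is correct and coincides with the paper's.
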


\begin{proof}
  The proof is by induction on $n$.
  If $n=1$, then the result holds by Lemma~\ref{lem:app-HN-filtr-1}.
  Assume now that the result holds for $n=N-1$, then $\Hom(E_{N-1},E')=0$.
  The exact triangle $E_{N-1}\to E_N\to A_N \to E_{N-1}[1]$ yields (see \cite[Chap.~2~Prop.~1.2.1]{Verdier96-PhD}) the long exact sequence
  \begin{equation*}
  \to \Hom(E_{N-1}[1],E')\To \Hom(A_N,E')\To \Hom(E_N,E') \To \Hom(E_{N-1},E')\to
  \end{equation*}
  By Lemma~\ref{lem:app-HN-filtr-1} we have that $\Hom(A_N,E')=0$.
  Since $\Hom(E_{N-1},E')=0$, we conclude that $\Hom(E_N,E')=0$.  
\end{proof}

\begin{lem}\label{lem:app-HN-filtr-3}
  Let $0\neq E\in \D$ and suppose that $E$ has two HN-filtrations
 \begin{equation*} \xymatrix@R16pt@C8pt{
        0=E_0 \ar[rr]  & & E_1 \ar[dl]\ar[rr]  & & E_2 \ar[dl]\ar[r] & \cdots  \ar[r] & E_{n-1} \ar[rr]  & & E_n=E \ar[dl]\\
        & A_1\ar@{-->}[ul] & & A_2\ar@{-->}[ul] &&& & A_n\ar@{-->}[ul]
      }
\end{equation*}
 \begin{equation*} \xymatrix@R16pt@C8pt{
        0=E'_0 \ar[rr]  & & E'_1 \ar[dl]\ar[rr]  & & E'_2 \ar[dl]\ar[r] & \cdots  \ar[r] & E'_{m-1} \ar[rr]  & & E'_m=E \ar[dl]\\
        & A'_1\ar@{-->}[ul] & & A'_2\ar@{-->}[ul] &&& & A'_m\ar@{-->}[ul]
      }
\end{equation*}
with $A_j\in\P(\phi_j)$, $A'_k\in\P(\phi'_k)$, $\phi_1>\ldots > \phi_n$ and $\phi'_1>\ldots >\phi'_m$.
Then $\phi_1=\phi'_1$.
\end{lem}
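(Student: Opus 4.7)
The plan is to argue by contradiction via symmetry: assuming without loss of generality $\phi_1 > \phi'_1$, I would produce a non-zero morphism $A_1 \to E$ and then invoke Lemma~\ref{lem:app-HN-filtr-1} applied to the second HN-filtration to conclude $\Hom(A_1,E)=0$, yielding the contradiction. Swapping the roles of the two filtrations then forces $\phi_1 = \phi'_1$.

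First I would identify the candidate morphism. Since $E_0 = 0$, the exact triangle $E_0 \to E_1 \to A_1 \to E_0[1]$ gives an isomorphism $E_1 \cong A_1$, so the natural candidate is the composition
$$f : A_1 \cong E_1 \To E_2 \To \cdots \To E_n = E,$$
where each arrow is the middle map in the corresponding exact triangle $A_j[-1] \to E_{j-1} \to E_j \to A_j$ of the first HN-filtration.

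The crux is verifying that $f \neq 0$. I would apply the cohomological functor $\Hom(A_1,-)$ to each of the triangles $A_j[-1] \to E_{j-1} \to E_j \to A_j$ for $j = 2, \ldots, n$. Because $A_1 \in \P(\phi_1)$, $A_j \in \P(\phi_j)$, and $\phi_1 > \phi_j$, axiom~\ref{axiom-3} kills both $\Hom(A_1, A_j[-1])$ and $\Hom(A_1, A_j)$. The resulting long exact sequence then yields an isomorphism $\Hom(A_1, E_{j-1}) \cong \Hom(A_1, E_j)$ realized by post-composition with the middle map of the triangle. Iterating these isomorphisms, $\Hom(A_1, E_1) \cong \Hom(A_1, E)$ is realized by post-composition with $f$; since $\id_{A_1} \neq 0$ sits in $\Hom(A_1, A_1) = \Hom(A_1, E_1)$, its image $f \in \Hom(A_1, E)$ is non-zero.

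To finish, I would apply Lemma~\ref{lem:app-HN-filtr-1} to the second HN-filtration of $E$, whose top phase is $\phi'_1$: since $A_1 \in \P(\phi_1)$ with $\phi_1 > \phi'_1$, this lemma forces $\Hom(A_1, E) = 0$, contradicting $f \neq 0$. The symmetric argument (interchanging the two HN-filtrations) rules out $\phi'_1 > \phi_1$, so $\phi_1 = \phi'_1$. The only non-routine step in this plan is the telescoping argument establishing $f \neq 0$; the rest is a direct application of the preceding lemmas.
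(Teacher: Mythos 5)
Your proposal is correct and follows essentially the same route as the paper: both argue by contradiction from $\phi_1>\phi'_1$, use the vanishing of $\Hom(A_1,A_j[-1])$ and $\Hom(A_1,A_j)$ via axiom~\ref{axiom-3} to telescope $\Hom(A_1,E)\cong\Hom(A_1,A_1)\neq 0$, and contradict this with Lemma~\ref{lem:app-HN-filtr-1} applied to the second filtration. Your extra step of exhibiting the explicit non-zero element $f$ as the image of $\id_{A_1}$ is a harmless (and slightly more concrete) rephrasing of the paper's observation that the Hom-group is non-zero.
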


\begin{proof}
  Suppose without loss of generality that $\phi_1>\phi'_1$.
  Then Lemma~\ref{lem:app-HN-filtr-1} implies that $\Hom(A_1,E)=0$.
  But, by the same argument as in the proof of Lemma~\ref{lem:app-HN-filtr-1} we obtain the following long exact sequence
  \begin{equation*}
  \To \underbrace{\Hom(A_1,A_j[-1])}_{=0}\To \Hom(A_1,E_{j-1})\To \Hom(A_1,E_j) \To \underbrace{\Hom(A_1,A_j)}_{=0}\To
  \end{equation*}
  for every $j=2,\ldots,n$.
  This implies that $\Hom(A_1,E)\cong \Hom(A_1,E_{n-1})\cong\ldots\cong \Hom(A_1,E_1)\cong \Hom(A_1,A_1)\neq 0$, since $E_1\cong A_1$ and $A_1\neq 0$.
  We have arrived at a contradiction and conclude that $\phi_1=\phi'_1$.
\end{proof}

\begin{notation}
  Given a HN-filtration 
 \begin{equation*} \xymatrix@R16pt@C8pt{
        0=E_0 \ar[rr]  & & E_1 \ar[dl]\ar[rr]  & & E_2 \ar[dl]\ar[r] & \cdots  \ar[r] & E_{n-1} \ar[rr]  & & E_n=E \ar[dl]\\
        & A_1\ar@{-->}[ul] & & A_2\ar@{-->}[ul] &&& & A_n\ar@{-->}[ul]
      }
\end{equation*}
of a non-zero object $E$ of $\D$, we denote by $\phi^-(E)$, resp. $\phi^+(E)$, the \emph{lowest}, resp. \emph{highest}, phase of the semistable factors occurring in this particular filtration, i.e. $\phi^-(E)=\phi_n$ and $\phi^+(E)=\phi_1$.
Note that the filtration is left implicit in this notation (later we will see that the HN-filtration is unique).
\end{notation}

\begin{lem}\label{lem:app-HN-filtr-4}
  Let $E,F$ be two non-zero objects of $\D$, $\psi\in \R$ a real number and $E'\to E \to E''\to E'[1]$, $F'\to F \to F''\to F'[1]$ be two exact triangles.
  Suppose that $E',E''$ and $F',F''$ have HN-filtrations satisfying
  \begin{equation*}
    \phi^-(E')> \psi \geq \phi^+(E'')\quad \text{and}\quad \phi^-(F')> \psi \geq \phi^+(F'').
  \end{equation*}
  Then any morphism $f\in\hom(E,F)$ can be uniquely extended to a morphism of triangles $(f',f,f'')$, i.e. there are unique $f'\in\Hom(E',F')$ and $f''\in\Hom(E'',F'')$ such that the following diagram is commutative.
\begin{equation*}
  \xymatrix{ 
         E' \ar[r] \ar@{-->}[d]^{f'} &E \ar[r]\ar[d]^{f} & E'' \ar[r]\ar@{-->}[d]^{f''} & E'[1]\ar@{-->}[d]^{f'[1]} \\
          F' \ar[r] & F  \ar[r] & F'' \ar[r]  & F'[1] 
 }
  \end{equation*}
  If, moreover, $f\in\hom(E,F)$ is an isomorphism, then $f'$ and $f''$ are isomorphisms as well.
\end{lem}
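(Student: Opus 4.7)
The plan is to apply Lemma~\ref{lem:Gelfand-Manin-triangle-extension} to $f\in\hom(E,F)$ together with the two given exact triangles, with $g=f$. Denote the morphisms in the triangles by $E'\overset{u}{\to}E\overset{v}{\to}E''\overset{w}{\to}E'[1]$ and $F'\overset{u'}{\to}F\overset{v'}{\to}F''\overset{w'}{\to}F'[1]$. To invoke Lemma~\ref{lem:Gelfand-Manin-triangle-extension} we must check the two hypotheses: $v'\circ f\circ u=0$ (so that an extension exists), and $\Hom(E',F''[-1])=0$ (so that the extension is unique).

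Both vanishings follow from Lemma~\ref{lem:app-HN-filtr-2} applied to the HN-filtrations of $E'$, $F''$, and $F''[-1]$. The hypothesis $\phi^-(E')>\psi\geq\phi^+(F'')$ says that every semistable factor occurring in the HN-filtration of $E'$ has strictly larger phase than every semistable factor of $F''$; hence $\Hom(E',F'')=0$ and in particular $v'\circ f\circ u=0$. Similarly, shifting the HN-filtration of $F''$ by $-1$ produces an HN-filtration of $F''[-1]$ all of whose phases are at most $\psi-1<\psi<\phi^-(E')$, so Lemma~\ref{lem:app-HN-filtr-2} gives $\Hom(E',F''[-1])=0$. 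This yields the desired unique extension $(f',f,f'')$.

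For the second statement, suppose that $f$ is an isomorphism. The same construction applied to $f^{-1}\in\hom(F,E)$ (swapping the roles of the two triangles, which is legitimate because the phase hypotheses are symmetric in $E$ and $F$) produces a unique morphism of triangles $(g',f^{-1},g'')$. Composing in both orders yields morphisms of triangles $(f'g',\id_F,f''g'')$ and $(g'f',\id_E,g''f'')$. On the other hand the identity triples $(\id_{F'},\id_F,\id_{F''})$ and $(\id_{E'},\id_E,\id_{E''})$ are trivially morphisms of triangles extending $\id_F$ and $\id_E$ respectively. Uniqueness of the extension applies in each case, since the hypotheses on the $E$-triangle and $F$-triangle together with Lemma~\ref{lem:app-HN-filtr-2} give $\Hom(E',E''[-1])=0$ and $\Hom(F',F''[-1])=0$. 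Hence $f'g'=\id_{F'}$, $g'f'=\id_{E'}$, and likewise for $f''$, $g''$, so $f'$ and $f''$ are isomorphisms.

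The argument is essentially formal once one notices that the phase gap at $\psi$ separates $E'$ from both $F''$ and $F''[-1]$ in the slicing; the main (and only) point is the systematic use of Lemma~\ref{lem:app-HN-filtr-2} to kill the obstruction and uniqueness Hom-groups. There is no genuine obstacle, but one should be careful to verify the uniqueness hypothesis in all three places it is used (for the extensions of $f$, $f^{-1}$, $\id_E$, and $\id_F$), which is precisely what the bound $\phi^-(\cdot)>\psi\geq\phi^+(\cdot)$ is designed to provide.
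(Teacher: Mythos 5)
Your proof is correct and follows essentially the same route as the paper's: both use Lemma~\ref{lem:app-HN-filtr-2} together with the phase gap at $\psi$ to obtain $\Hom(E',F'')=\Hom(E',F''[-1])=0$, invoke Lemma~\ref{lem:Gelfand-Manin-triangle-extension} for existence and uniqueness of the extension, and then deduce the isomorphism statement by extending $f^{-1}$ and comparing with the unique extensions of $\id_E$ and $\id_F$. Your explicit verification of the hypothesis $v'fu=0$ and of the uniqueness Hom-vanishings in each of the places they are needed is a welcome bit of extra care, but the argument is the same.
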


\begin{proof}
  Since we have
  $$
  \phi^-(E')>\psi\geq\phi^+(F'')>\phi^+(F''[-1])=\phi^+(F'')-1,
  $$
  Lemma~\ref{lem:app-HN-filtr-2} implies that $\Hom(E',F'')=\Hom(E',F''[-1])=0$.
  Hence, by Lem\-ma~\ref{lem:Gelfand-Manin-triangle-extension}, $f$ can be uniquely extended to a morphism of triangles $(f',f,f'')$.\par
  Suppose that $f$ is an isomorphism, then there exists $g\in \hom(F,E)$ such that $gf=\id_E$ and $fg=\id_F$.
  then, by the previous argument, we can uniquely extend $g$ to a morphism of triangles $(g',g,g'')$ with $g'\in\hom(F',E')$ and $g''\in\hom(F'',E'')$.
  Now, $(\id_{E'},\id_E,\id_{E''})$ and $(g'f',gf=\id_E,g''f'')$ are both morphism of triangles extending $\id_E$.
  Hence, uniqueness of this extension implies that $g'f'=\id_{E'}$ and $g''f''=\id_{E''}$.
  Similarly it follows that $f'g'=\id_{F'}$ and $f''g''=\id_{F''}$ and therefore $f'$ and $f''$ are isomorphisms.
\end{proof}

\begin{lem}\label{lem:app-HN-filtr-5}
  Suppose that $0\neq E\in \D$ has the following two HN-filtrations
   \begin{equation*} 
     \xymatrix@R16pt@C8pt{ 0=E_0 \ar[rr] &&E_1=E \ar[dl]\\
       &A_1\ar@{-->}[ul]&
     }
    \quad\mkern-12mu \text{and}\quad \mkern-12mu
     \xymatrix@R16pt@C8pt{
        0=E'_0 \ar[rr]  & & E'_1 \ar[dl]\ar[r]  & \cdots  \ar[r] & E'_{m-1} \ar[rr]  & & E'_m=E \ar[dl]\\
        & A'_1\ar@{-->}[ul] &&& & A'_m\ar@{-->}[ul]
      }
\end{equation*}
with $A_1\in\P(\phi_1)$, $A'_j\in\P(\phi'_j)$ and $\phi'_1>\ldots > \phi'_m$.
Then $A_1$ is isomorphic to $A'_1$ and $m=1$.
\end{lem}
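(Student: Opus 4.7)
The plan is to first show that $\mathrm{cone}(E'_1 \to E) \cong 0$ — forcing $A'_1 \cong E \cong A_1$ — and then to rule out $m \geq 2$ by an octahedral argument combined with Lemma~\ref{lem:app-HN-filtr-3}.

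I will begin by invoking Lemma~\ref{lem:app-HN-filtr-3} to obtain $\phi_1 = \phi'_1$, and without loss of generality assume every $A'_j$ is nonzero (any zero factor can be pruned from the filtration). Set $C := \mathrm{cone}(E'_1 \to E)$. Iterating the octahedron axiom along the chain $E'_1 \to E'_2 \to \cdots \to E'_m$, the successive quotients $E'_j/E'_1$ assemble into an HN-filtration of $C$ with semistable factors $A'_2, \ldots, A'_m$, of strictly decreasing phases $\phi'_2 > \cdots > \phi'_m$, all strictly smaller than $\phi_1$.

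The central step is to force $C \cong 0$. By Lemma~\ref{lem:app-HN-filtr-2} applied to $E \in \P(\phi_1)$ (with its trivial HN-filtration) and $C$, the group $\hom(E, C)$ vanishes, so the map $v \colon E \to C$ in the triangle $E'_1 \xrightarrow{u} E \xrightarrow{v} C \to E'_1[1]$ is zero. The long exact sequence obtained from $\hom(E, -)$ then yields a section $s \colon E \to E'_1$ of $u$, and the standard triangulated-category splitting argument produces a direct-sum decomposition $E'_1 \cong E \oplus C[-1]$. The projection to the second summand lies in $\hom(E'_1, C[-1])$, which again vanishes by Lemma~\ref{lem:app-HN-filtr-2} (every phase $\phi'_j - 1$ of $C[-1]$ is strictly less than $\phi_1$); this forces $\mathrm{id}_{C[-1]} = 0$, so $C \cong 0$, $u$ is an isomorphism, and $A'_1 \cong E \cong A_1$.

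To conclude $m = 1$, I will argue by contradiction: suppose $m \geq 2$ and apply the octahedron axiom to $E'_1 \to E'_2 \to E'_m = E$. Since $\mathrm{cone}(E'_1 \to E) = C = 0$, this produces an exact triangle $A'_2 \to 0 \to E'_m/E'_2 \to A'_2[1]$ and hence $E'_m/E'_2 \cong A'_2[1]$. If $m = 2$ this reads $0 \cong A'_2[1]$, contradicting $A'_2 \neq 0$. If $m \geq 3$, then $E'_m/E'_2$ carries an HN-filtration with factors $A'_3, \ldots, A'_m$ of strictly decreasing phases (top phase $\phi'_3$), while the trivial HN-filtration of $A'_2[1]$ has phase $\phi'_2 + 1$; Lemma~\ref{lem:app-HN-filtr-3} then forces $\phi'_3 = \phi'_2 + 1$, contradicting $\phi'_3 < \phi'_2$. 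The main obstacle I anticipate is the splitting step: one must confirm that $v = 0$ genuinely produces a biproduct decomposition $E'_1 \cong E \oplus C[-1]$ (not merely a retract of $u$), so that the subsequent vanishing of $\hom(E'_1, C[-1])$ propagates to $C \cong 0$.
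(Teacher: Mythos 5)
Your argument is correct, but it follows a genuinely different route from the paper's. The paper compares the degenerate triangle $A_1\to E\to 0$ with the top triangle $E'_{m-1}\to E\to A'_m$ of the second filtration via the unique extension of $\id_E$ to a morphism of triangles (Lemma~\ref{lem:app-HN-filtr-4}), concluding $A'_m\cong 0$ and $E'_{m-1}\cong E$, and then iterates downward; it thus peels off the lowest-phase factors one at a time and reuses exactly the machinery needed for the main uniqueness proof. You instead work from the top: you form $C=\mathrm{cone}(E'_1\to E)$, kill the connecting map $E\to C$ by Lemma~\ref{lem:app-HN-filtr-2}, split $E'_1\cong E\oplus C[-1]$, and then kill the complementary summand by a second application of Lemma~\ref{lem:app-HN-filtr-2}, so that $E'_1\cong E$ in one stroke. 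The splitting step you flag as the main obstacle is not an issue: $v=0$ makes $E'_1[1]$ a cone over the zero morphism $E\to C$, which by Remark~\ref{rem:cone-direct-sum} is the genuine biproduct $E[1]\oplus C$, and then $\Hom(E'_1,C[-1])=0$ forces $\id_{C[-1]}=0$. Your approach avoids Lemma~\ref{lem:app-HN-filtr-4} entirely and is arguably more conceptual, at the cost of a slightly longer endgame; note that once $C\cong 0$ you could also dispense with the octahedral contradiction by observing, as in the proof of Lemma~\ref{lem:app-HN-filtr-3}, that $\Hom(A'_2,C)\cong\Hom(A'_2,A'_2)\neq 0$ whenever $m\geq 2$ and $A'_2\neq 0$. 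Finally, your ``prune the zero factors'' convention should be stated as such, but it is consistent with the paper, whose own proof likewise concludes $A'_j=0$ for $j\geq 2$ rather than literally $m=1$.
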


\begin{proof}
  First, by Lemma~\ref{lem:app-HN-filtr-3} we have that $\phi_1=\phi'_1$.
  Since $\phi_1=\phi'_1>\ldots >\phi'_m$, Lemma~\ref{lem:app-HN-filtr-4} implies that the identity morphism $\id_E\in\Hom(E,E)$ can be uniquely extended to a morphism of triangles where all the vertical morphisms are isomorphisms.
  \begin{equation*}
  \xymatrix{ 
         A_1 \ar[r] \ar@{-->}[d]^{\cong} &A_1\cong E \ar[r]\ar[d]^{\id_E} & 0 \ar[r]\ar@{-->}[d]^{\cong} & A_1[1]\ar@{-->}[d]^{\cong} \\
          E'_{m-1} \ar[r] & E'_m=E  \ar[r] & A'_m \ar[r]  & E'_{m-1}[1] 
 }
  \end{equation*}
  By iteration we get that $A'_j=0$ and $E'_{j-1}\cong E$ for every $j=2,\ldots,m$ from which we conclude that $A'_1\cong A_1$ and $E'_1\cong E$.
\end{proof}

\begin{rem}\label{rem:app-HN-filtr-rem1}
  Suppose that $0\neq E\in \D$ has a HN-filtration
   \begin{equation*} \xymatrix@R16pt@C8pt{
        0=E_0 \ar[rr]  & & E_1 \ar[dl]\ar[rr]  & & E_2 \ar[dl]\ar[r] & \cdots  \ar[r] & E_{n-1} \ar[rr]  & & E_n=E \ar[dl]\\
        & A_1\ar@{-->}[ul] & & A_2\ar@{-->}[ul] &&& & A_n\ar@{-->}[ul]
      }
\end{equation*}
with $A_j\in\P(\phi_j)$ and $\phi_1>\ldots > \phi_n$.
By rotating these triangles and shifting the brackets (see Remark~\ref{rem:shifting-iterated-cones}) we may, for every $k=2,\ldots,n$, express $E$ as 
\begin{equation*}
  E\cong(A_n[-1]\to \ldots A_k[-1]\to E_{k-1})\cong ((A_n[-2]\to \ldots A_k[-2]\to 0)\to E_{k-1}).
\end{equation*}
Defining $G_k:=(A_n[-1]\to \ldots A_k[-1]\to 0)$ we have that $E\cong (G_k[-1]\to E_{k-1})$ and $G_k$ admits a HN-filtration with $\phi^+(G_k)=\phi_k$ and $\phi^-(G_k)=\phi_n$.
\end{rem}

With these ingredients we can proceed to prove the uniqueness of the HN-filtration up to isomorphism of the semistable factors.

\begin{proof}[Proof of Proposition~\ref{prop:HN-unique}]
  Suppose that $0\neq E\in \D$ has two HN-filtration as follows
 \begin{equation*} \xymatrix@R16pt@C8pt{
        0=E_0 \ar[rr]  & & E_1 \ar[dl]\ar[rr]  & & E_2 \ar[dl]\ar[r] & \cdots  \ar[r] & E_{n-1} \ar[rr]  & & E_n=E \ar[dl]\\
        & A_1\ar@{-->}[ul] & & A_2\ar@{-->}[ul] &&& & A_n\ar@{-->}[ul]
      }
\end{equation*}
 \begin{equation*} \xymatrix@R16pt@C8pt{
        0=E'_0 \ar[rr]  & & E'_1 \ar[dl]\ar[rr]  & & E'_2 \ar[dl]\ar[r] & \cdots  \ar[r] & E'_{m-1} \ar[rr]  & & E'_m=E \ar[dl]\\
        & A'_1\ar@{-->}[ul] & & A'_2\ar@{-->}[ul] &&& & A'_m\ar@{-->}[ul]
      }
\end{equation*}
with $A_j\in\P(\phi_j)$, $A'_k\in\P(\phi'_k)$, $\phi_1>\ldots > \phi_n$ and $\phi'_1>\ldots \phi'_m$.
Then, by Lemma~\ref{lem:app-HN-filtr-3} we have that $\phi_1=\phi'_1$.
Let $G_2$ and $G'_2$ be the objects constructed from the above filtrations as in Remark~\ref{rem:app-HN-filtr-rem1} for $k=2$.
Choose $\psi\in \R$ such that $\phi_1=\phi'_1>\psi\geq \phi_2$ and $\phi_1=\phi'_1>\psi\geq \phi'_2$ are satisfied.
Note that 
\begin{align*}
\phi^-(E_1)=\phi_1 &> \psi \geq \phi^+(G_2)=\phi_2 \\
\phi^-(E'_1)=\phi'_1=\phi_1 &> \psi \geq \phi^+(G'_2)=\phi'_2
\end{align*}
Now, by Lemma~\ref{lem:app-HN-filtr-4}, $\id_E\in\Hom(E,E)$ can be uniquely extended to a morphism of triangles where the vertical morphisms are isomorphisms.
  \begin{equation*}
  \xymatrix{ 
         E_1\cong A_1 \ar[r] \ar@{-->}[d]^{\cong} &E \ar[r]\ar[d]^{\id_E} & G_2 \ar[r]\ar@{-->}[d]^{\cong} & E_1[1]\ar@{-->}[d]^{\cong} \\
          E'_1\cong A'_1 \ar[r] & E  \ar[r] & G'_2 \ar[r]  & E'_1[1] 
 }
  \end{equation*}
In particular note that $A_1\cong A'_1$.
We may now repeat the same argument with the HN-filtrations of $G_2$ and $G'_2$ and iteratively split off the first semistable factor until, after $\min\{m,n\}-1$ steps one of the two filtrations has length 1.
Then Lemma~\ref{lem:app-HN-filtr-5} implies that the other filtration must also have length 1, i.e. $m=n$.
Thus we conclude that $A_j\cong A'_j$ for every $j=1,\ldots,n$.
This shows that the HN-filtration of a non-zero object is unique up to isomorphism of the semistable factors.
\end{proof}


   \section{Basics on Quasi-Abelian Categories}\label{appendix:qac}

We will limit ourselves here to the definition and some elementary facts, without proofs, about quasi-abelian categories following \cite[Sect.~4]{Bridgeland-SC} and \cite{Schneiders-qac}.
This collection of definitions and results is included for the convenience of the reader, for an extensive treatment on the subject we refer to Schneiders \cite{Schneiders-qac}.\\ \par

Before we give the definition of quasi-abelian categories, we will recall how the notions of \emph{subobject} and \emph{quotient object} are defined (see e.g.~\cite{weibel-book,Freyd-Abelian_Categories}).

\begin{df}\label{df:subobject-quotient_object}
  Let $\CC$ be a category.
  \begin{itemize}
    \item A morphism $m:A\to B$ in $\CC$ is a \emph{monomorphism} if for any two distinct morphisms $h_1, h_2:C\to A$ we have $mh_1\neq mh_2$.
    \item Two monomorphisms $m_1:A_1\to B$ and $m_2:A_2\to B$ are \emph{equivalent} if there are morphisms $A_1\to A_2$ and $A_2\to A_1$ such that the following two diagrams commute:
      \begin{align*}
\begin{minipage}{0.4\textwidth}
        \xymatrix{
         A_1\ar[d]\ar[rd]^{m_1} &\\
         A_2\ar[r]_{m_2}  & B       
}
\end{minipage}%
\hspace{2cm}
\begin{minipage}{0.4\textwidth}
        \xymatrix{
         A_2\ar[d]\ar[rd]^{m_2} &\\
         A_1\ar[r]_{m_1}  & B       
}
\end{minipage}                      
      \end{align*}

\item A \emph{subobject} of $B$ is an equivalence class of monomorphisms with target $B$.

\item A morphism $e:A\to B$ in $\CC$ is called \emph{epimorphism} if for any two distinct morphisms $g_1,g_2:B\to C$ we have $g_1e\neq g_2e$.

\item  Two epimorphisms $e_1:A\to B_1$ and $e_2:A\to B_2$ are \emph{equivalent} if there are morphisms $B_1\to B_2$ and $B_2\to B_1$ such that the following two diagrams commute:
      \begin{align*}
\begin{minipage}{0.4\textwidth}
        \xymatrix{
         &B_1\ar[d]\\
         A\ar[r]_{e_2}\ar[ru]^{e_1}&B_2       
}
\end{minipage}%
\hspace{2cm}
\begin{minipage}{0.4\textwidth}
        \xymatrix{
         &B_2\ar[d]\\
         A\ar[r]_{e_1}\ar[ru]^{e_2}&B_1       
}
\end{minipage}                      
      \end{align*}
  
\item A \emph{quotient object} of $A$ is an equivalence class of epimorphisms with source $A$.

\end{itemize}
\end{df}

\begin{rem}
  If a subobject of $B$ is given by the equivalence class of a monomorphism $m:A\to B$, one often says that ``$A$ is a subobject of $B$'', which is slightly imprecise.
  Similarly, if a quotient object of $A$ is given by the equivalence class of an epimorphism $e:A\to B$ one sometimes says that ``$B$ is a quotient object of $A$''.
\end{rem}

\begin{df}
  Let $\B$ be an additive category with kernels and cokernels and let $f:E\to F$ be a morphism in $\B$.
  \begin{itemize}
  \item The \emph{image} of $f$ is the kernel of the canonical morphism $F\to \mathrm{coker} f$.
  \item The \emph{coimage} of $f$ is the cokernel of the canonical morphism $\mathrm{ker}f\to E$.
  \item The morphism $f$ is called \emph{strict} if the canonical morphism $\mathrm{coim}f\to\mathrm{im}f$ induced by $f$ is an isomorphism.
  \item The category $\B$ is \emph{quasi-abelian} if the following two axioms hold
    \begin{itemize}
    \item[(QAI)] In a pullback diagram
      \begin{align*}
        \xymatrix{
          E\ar[r]^f & F\\
          E'\ar[u]\ar[r]^{f'} & F'\ar[u]_g
}
      \end{align*}
      where $f$ is a strict epimorphism, $f'$ is a strict epimorphism as well.
     \item[(QAII)] In a pushout diagram
      \begin{align*}
        \xymatrix{
          E'\ar[r]^{f'} & F'\\
          E\ar[u]^g\ar[r]^f & F\ar[u]
}
      \end{align*}
      where $f$ is a strict monomorphism, $f'$ is a strict monomorphism as well.
   \end{itemize}
  \end{itemize}
\end{df}

We now list some elementary properties without proofs.\par
Let $\B$ be an additive category with kernels and cokernels.
For any morphism $f:E\to F$, the canonical morphism $\mathrm{ker}f\to E$ (resp.~$F\to \mathrm{coker}f$) is a strict monomorphism (resp. epimorphism).
Moreover, if $f:E\to F$ is a strict monomorphism (resp. epimorphism), then $f$ is the kernel (resp. cokernel) of $F\to \mathrm{coker}f$ (resp.~$\mathrm{ker}f\to E$).
In an abelian category, every monomorphism is a kernel and every epimorphism is a cokernel.
Thus an abelian category is a quasi-abelian category in which every monomorphism and every epimorphism is strict.
A morphism $f$ is strict if and only if it factorizes as $f=i\circ q$ where $i$ is a strict monomorphism and $q$ a strict epimorphism.
Therefore an abelian category is a quasi-abelian category where every morphism is strict.
The class of strict epimorphisms (resp.~monomorphisms) is closed under composition (see~\cite[Prop.~1.1.7]{Schneiders-qac}).
Suppose that
\begin{align*}
  \xymatrix{
  &F\ar[rd]^g&\\
E\ar[ru]^f\ar[rr]_h&&G
}
\end{align*}
is a commutative diagram in the category $\B$.
If $h$ is a strict epimorphism, then $g$ is a strict epimorphism and similarly, if $h$ is a strict monomorphism, then $f$ is a strict monomorphism (see~\cite[Prop.~1.1.8]{Schneiders-qac}).

\begin{df}
  A \emph{strict short exact sequence} in a quasi-abelian category $\B$ is a sequence
$$
0\To E\overset{i}\To F \overset{q}\To G\To 0
$$
where $i$ is the kernel of $q$ and $q$ is the cokernel of $i$.
\end{df}
Note that by the remarks above it follows that both $i$ and $q$ are strict.
Furthermore, any strict monomorphism $i$ fits into a strict short exact sequence by taking $j$ to be its cokernel.
Similarly, any strict epimorphism $j$ fits into a strict short exact sequence by taking $i$ to be its kernel.
As for abelian categories, one can define the Grothendieck group $K(\B)$ of a quasi-abelian category $\B$ as the free abelian group generated by the objects of $\B$, subject to the relation $[F]=[E]+[G]$ for each strict short exact sequence $0\To E\To F \To G\To 0$.\par
Let $\B$ be a quasi-abelian category and $E,F,G$ objects of $\B$.
Following Bridgeland \cite{Bridgeland-SC} we say that $E$ is a \emph{strict subobject} of $F$, denoted by $E\subset F$, if there exists a strict monomorphism $E\overset{i}{\To}F$ and we denote the cokernel of $i$ by $F/E$.
Similarly, $G$ is referred to as a \emph{strict quotient} of $E$, denoted by $E\twoheadrightarrow G$, if there is a strict epimorphism $E\overset{q}{\To}G$.

In analogy to Definition~\ref{df:stable-Jordan-Hoelder} we make the following definition. 

\begin{df}
 Let $\B$ be a quasi-abelian category.
  \begin{enumerate}
    \item An object $E$ of $\B$ is \emph{strictly stable} if it does not admit any strict subobjects or strict quotients.
    \item The quasi-abelian category $\B$ is of \emph{finite length} if every object $E\in\B$ admits a \emph{Jordan-H\"older filtration}.
      A Jordan-H\"older filtration of an object $E$ is a finite sequence of \emph{strict} subobjects
      $$
      0=E_0\subset E_{1}\subset \ldots E_{n-1}\subset E_n=E
      $$
      such that each (strict) quotient $E_j/E_{j-1}$ is a strictly stable object.
  \end{enumerate}
\end{df}
A quasi-abelian category is artinian (resp.~noetherian) if any descending (resp.~ascending) chain of strict subobject stabilizes.
As in the abelian case, a quasi-abelian category is of finite length if and only if it is artinian and noetherian.


\end{appendix}


 \bibliographystyle{alpha}
\bibliography{SCLC-arxiv.bbl}

\end{document}